        \titleformat{\section}{\normalfont\large\bf}{\thesection.}{1ex}{\centering}
        \titleformat{\subsection}[runin]{\normalfont\bf}{\thesubsection.}{1ex}{}[.]
    \theoremstyle{plain}
        \newtheorem{theorem}{Theorem}[section]
        \newtheorem{lemma}[theorem]{Lemma}
        \newtheorem{corollary}[theorem]{Corollary}
    \theoremstyle{definition}
    \theoremstyle{remark}
    \numberwithin{equation}{section}
    \DeclareMathOperator*{\esssup}{ess\,sup}
    \DeclareMathOperator{\supp}{supp}
\newcommand{\e}{\epsilon}
\newcommand{\R}{\mathbb{R}}
\begin{document}
\begin{center}\large
    {\bf Properties of Navier-Stokes mild solutions with \\ initial data in subcritical Lorentz spaces} \\\ \\
    Joseph P.\ Davies\footnote{University of Sussex, Brighton, UK, {\em jd535@sussex.ac.uk}} and Gabriel S. Koch\footnote{University of Sussex, Brighton, UK, {\em g.koch@sussex.ac.uk}}
\end{center}
\begin{abstract}
We are interested in mild solutions to the Navier-Stokes equations with initial data $f$ in a subcritical ($p>n$) Lorentz space  $L^{p,q}(\mathbb{R}^{n})$, which, if $p <\infty$, is continuously embedded into the larger  subcritical homogeneous Besov space 
$\dot B^{-\frac np}_{\infty,\infty}(\R^n)$.  For $p\in (n,\infty)$ and $f\in \dot B^{-\frac np}_{\infty,\infty}(\R^n)$, it is known that there exists some maximal time $T^B_f \in (0,\infty]$ such that a unique local (in time) mild solution $u_f^B$ exists in, e.g., $L^\infty(0,T;\dot B^{-\frac np}_{\infty,\infty}(\R^n)) \cap L^{4/(1+\frac np)}(0,T;L^\infty(\R^n))$ for all $T\in (0,T^B_f)$, and if $T^B_f <\infty$, then 
%$u_f^B \notin L^{2}(0,T^B_f;L^\infty(\R^n))$  and 
one has the blow-up estimate $\|u^B_f(t)\|_{\dot B^{-\frac np}_{\infty,\infty}(\R^n)} \gtrsim_{n,p} (T^B_f -t)^{-\frac 12 (1-\frac np)}$ for all $t\in (0,T^B_f)$.  The continuous embedding means that one can replace $\dot B^{-\frac np}_{\infty,\infty}(\R^n)$ in the blow-up estimate by $L^{p,q}(\mathbb{R}^{n})$ for any $q\in [1,\infty]$, which would not be very interesting if $\|u(t)\|_{L^{p,q}(\mathbb{R}^{n})} \equiv \infty$; even if this is not the case, in principle  one might be able to further replace $T^B_f$ in the estimate by some smaller $T^L_f \in (0,T^B_f)$.   

For $f\in L^{p,q}(\mathbb{R}^{n})$, we show that this cannot happen:  there exists  a maximal time $T^L_f\in (0,\infty]$ such that there exists a unique local mild solution 
${u_f^L \in L^\infty(0,T;L^{p,q}(\mathbb{R}^{n})) \cap L^{\alpha}(0,T;L^\infty(\R^n))}$ \linebreak for all ${T\in (0,T^L_f)}$ and $\alpha \in [1,\frac{2p}n)$, and if $T^L_f <\infty$, one has the blow-up estimate (established by J. Leray (1934) in the Lebesgue setting $r=s$)   ${\|u_f^L(t)\|_{L^{r,s}(\mathbb{R}^{n})} \gtrsim_{n,r} (T^L_f -t)^{-\frac 12 (1-\frac nr)}}$  for all $t\in (0,T^L_f)$, $r\in (n,\infty]$ and $s\in [1,\infty]$; in particular, $u_f^L \notin L^{4/(1+\frac np)}(0,T^L_f;L^\infty(\R^n))$.   The solution  is shown to be unique in, e.g.,  the larger class $\cup_{\alpha >2}L^\alpha(0,T;L^\infty(\R^n))$, so that $T^L_f$ is not only independent of $p$ and $q$, but also  $ T^L_f=T^B_f $ and $  u_f^L\equiv u_f^B$.  This may be thought of as a `propagation of regularity' type of result for $u_f^B$, in that $\|u_f^B(t)\|_{L^{p,q}(\mathbb{R}^{n})}$ cannot blow up prior to $T^B_f$ for $f\in L^{p,q}(\mathbb{R}^{n})$.  Our existence results are based on the method of T. Kato (1984) with Lorentz spaces replacing Lebesgue spaces throughout, and are given without any reference to the Besov framework.  The uniqueness results are similarly self-contained, extending certain Lebesgue space methods of Fabes-Jones-Riviere (1972) to the Lorentz setting.  We also establish certain continuity properties of the solutions which are constructed.
\end{abstract}

%An easy way to look at Lorentz spaces is interpolation.  If you want to generalize Bernstein inequalities, you may write $L^p\subset \dot B^0_{p,\infty} \subset \dot B^{n/q-n/p}_{q,\infty}$ for $p\leq q$, then use the fact that $L^{p,r}=[L^{p_1},L^{p_2}]_{\theta,r}$ for $0<\theta<1$ and $\frac 1 p=(1-\theta)\frac 1{p_0}+\theta \frac 1{p_1}$, while $\dot B^\sigma_{q,r}=[\dot B^{\sigma_0}_{q,\infty}, \dot B^{\sigma_1}_{q,\infty}]_{\theta,r}$ for $\sigma=(1-\theta)\sigma_0+\theta \sigma_1$ and conclude that $L^{p,r}\subset \dot B^{n/q-n/p}_{q,r}$ for $p<q$ and $1\leq r\leq +\infty$. Would that help you?

\section{Introduction and context}
For initial data $f$ in a subcritical ($\epsilon >0$) homogeneous Besov space $\dot B^{-1+\e}_{\infty,\infty}(\R^n)$ with $\e\in (0,1)$, there are various ways (see, e.g., \cite{chemin2019a,davies2022,lemarie2016}) to construct a local-in-time `mild' Navier-Stokes solution $u_f^B$ with initial data $u_f^B(0)=f$.  One can moreover show that there exists some  maximal time $T^B_f \in (0,\infty]$ such that $u_f^B$ is the unique such solution on $(0,T^B_f)$, and if $T^B_f<\infty$ then one has the (scaling-invariant) `blow-up estimate'
$$\|u_f^B(t)\|_{\dot B^{-1+\e}_{\infty,\infty}(\R^n)} \gtrsim (T^B_f -t)^{-\frac \e 2} \quad \forall \ t\in (0,T^B_f)\, .$$
Of course, such a blow-up-rate type of result is not very interesting if, e.g.,  $u_f^B(t)\notin \dot B^{-1+\e}_{\infty,\infty}(\R^n)$ for some, or all, $t\in (0,T^B_f)$ (in which case, by convention, we write $\|u_f^B(t)\|_{\dot B^{-1+\e}_{\infty,\infty}(\R^n)} = \infty$).  As we showed in \cite{davies2022}, however, one can  construct $u_f^B$ (and $T^B_f$) in such a way that $u_f^B(t)\in \dot B^{-1+\e}_{\infty,\infty}(\R^n)$ for all $t\in (0,T^B_f)$ and
$$u_f^B\in L^\infty(0,T;\dot B^{-1+\e}_{\infty,\infty}(\R^n))\cap L^{4/(2-\epsilon)}(0,T;L^\infty(\R^n)) \quad \textrm{whenever}\ 0<T<T^B_f$$
and the above blow-up estimate holds, meaning in particular that $T^B_f$, if finite, is the first time the $\dot B^{-1+\e}_{\infty,\infty}(\R^n)$-norm of $u_f^B$ becomes unbounded, and can be thought of as the first loss of the `regularity' it maintains at all previous times through its membership in the space $\dot B^{-1+\e}_{\infty,\infty}(\R^n)$.

One might then wonder about the potential loss of other types of regularity if  the initial data $f$ is more regular.  If, for example, $f$ belongs to the smaller Lorentz space $L^{\frac n{1-\e},q}(\R^n)$ for some $\e \in (0,1)$ and $q\in [1,\infty]$, then by the embedding
$$L^{\frac n{1-\e},q}(\R^n) \hookrightarrow \dot B^{-1+\e}_{\infty,\infty}(\R^n) $$
(see, e.g., \cite{davies2022}) we see that $f\in \dot B^{-1+\e}_{\infty,\infty}(\R^n)$ so that we may take $u_f^B$ and $T^B_f$ as above; moreover, as the embedding is continuous, the blow-up estimate above implies the (again, scaling-invariant) blow-up estimate
$$\|u_f^B(t)\|_{L^{\frac n{1-\e},q}(\R^n)} \gtrsim (T^B_f -t)^{-\frac \e 2} \quad \forall \ t\in (0,T^B_f)\, .$$
As above, however, this would not be very interesting if $u_f^B(t)\notin L^{\frac n{1-\e},q}(\R^n)$ (i.e., ${\|u_f^B(t)\|_{L^{\frac n{1-\e},q}(\R^n)} = \infty}$) for some, or all, $t\in (0,T^B_f)$.  In principle, there could as well exist some $T^L_f \in (0,T^B_f)$ such that 
$$\|u_f^B(t)\|_{L^{\frac n{1-\e},q}(\R^n)} \gtrsim (T^L_f -t)^{-\frac \e 2} \quad \forall \ t\in (0,T^L_f)$$
 while $u_f^B(t)\in L^{\frac n{1-\e},q}(\R^n)$ for all $t\in (0,T^L_f)$ and
$$u_f^B\in L^\infty(0,T;L^{\frac n{1-\e},q}(\R^n)) \quad \textrm{whenever}\ 0<T<T^L_f\, .$$
%This would roughly mean that $u_f^B$ loses the regularity $L^{\frac n{1-\e},q}(\R^n)$ for the first time at some  positive time $T^L_f$ which is strictly sooner than $T^B_f$,  when it first loses the regularity $\dot B^{-1+\e}_{\infty,\infty}(\R^n)$.  
This means that the $L^{\frac n{1-\e},q}(\R^n)$-norm of $u_f^B$ could, in principle, become unbounded before the \linebreak $\dot B^{-1+\e}_{\infty,\infty}(\R^n)$-norm does, which could be thought of roughly as a `loss of regularity'.\footnote{In principle, nothing so far prevents  $u_f^B(t) \in \dot B^{-1+\e}_{\infty,\infty}(\R^n) \setminus L^{\frac n{1-\e},q}(\R^n)$ for all $t\in (T_f^L,T_f^B)$ as well, so that $u_f^B$ would be `less regular' after $T_f^L$ than before it.}

Here we provide at least one proof that that cannot happen, by showing that  $T^L_f:=T^B_f$ satisfies all of the above properties.  This can be thought of as a `propagation of regularity' type of result in the Besov framework of, e.g., \cite{davies2022}.  Our  method of proof, however, is  simply to provide a suitable self-contained local well-posedness theory in subcritical Lorentz spaces, without any reference to the larger Besov spaces.  In particular, we generalize the existence method of Kato \cite{kato1984} from the Lebesgue setting to the Lorentz setting (including\footnote{For initial data in certain Besov spaces with negative regularity, it is a standard method (see, e.g., \cite{bahouri2011,lemarie2016}) to essentially use Kato's method directly to construct solutions which will live in Lebesgue spaces for positive times.  Here, for subcritical Lorentz intial data, we replace those Lebesgue spaces with more general  Lorentz spaces.} the `path spaces'), while generalizing some methods of Fabes-Jones-Riviere \cite{fabes1972} from the Lebesgue to the Lorentz settings to obtain certain uniqueness theorems which we require.  The specific self-contained local well-posedness theory in Lorentz spaces (including certain continuity properties of solutions) which we provide here may be of independent interest. 

Specifically, for $\e \in (0,1)$, $q\in [1,\infty]$ and initial data $f\in L^{\frac n{1-\e},q}(\R^n)$, we show\footnote{Our full and most general results are given in the next section; here we mention only what is relevant in the context of the Besov theory.}  that  there exists  a maximal time $T^L_f\in (0,\infty]$ such that there exists a  local mild solution $u^L_f$ on $(0,T^L_f)$ with ${u_f^L(t)\in L^{\frac n{1-\e},q}(\R^n)}$ for all $t\in (0,T^L_f)$ which 
satisfies\footnote{In fact, $\sup_{0<t<T}\|u_f^L(t)\|_{L^{\frac n{1-\e},q}(\R^n)} <\infty$ for all $T\in (0,T^L_f)$.}
$${u_f^L \in L^\infty(0,T;L^{\frac n{1-\e},q}(\mathbb{R}^{n})) \cap L^{\alpha}(0,T;L^\infty(\R^n))} \quad \textrm{whenever}\ 0<T<T^L_f$$ 
for all $\alpha \in [1,\frac2{1-\e})$  and  is unique (as is $u_f^B$, see \cite{davies2022}) in the larger class $\cup_{\alpha >2}L^\alpha(0,T;L^\infty(\R^n))$, and if $T^L_f <\infty$, one has the blow-up estimate (established  by  Leray \cite{leray1934} for $n=3$ in the Lebesgue setting $r=s$; see \eqref{lebesgueburate} below)  $${\|u_f^L(t)\|_{L^{r,s}(\mathbb{R}^{n})} \gtrsim_{n,r} (T^L_f -t)^{-\frac 12 (1-\frac nr)}} \quad \forall \ t\in (0,T^L_f)$$ 
for all $r\in (n,\infty]$ and $s\in [1,\infty]$.  In the context of the Besov solution $u_f^B$ of \cite{davies2022} mentioned above, it is crucial that   this more general blow-up estimate  implies (taking ${r=s=\infty}$) that  $u_f^L\notin L^{4/(2-\epsilon)}(0,T^L_f;L^\infty(\R^n))$.   The uniqueness in $\cup_{\alpha >2}L^\alpha(0,T;L^\infty(\R^n))$ then allows one to deduce easily that $T^L_f$ is not only independent of $\epsilon$ and $q$, but also that  ${T^L_f=T^B_f}$ and ${u_f^L\equiv u_f^B}$, thus establishing the above-mentioned propagation of regularity result for $u_f^B$ when ${f\in L^{\frac n{1-\e},q}(\R^n)}$.

\section{Preliminaries and main results}
According to the physical theory, if an incompressible viscous Newtonian fluid occupies the whole space $\mathbb{R}^{n}$ in the absence of external forces, then the velocity $U(\tau,x)$ and kinematic pressure $P(\tau,x)$ of the fluid at time $\tau>0$ and position $x\in\mathbb{R}^{n}$ satisfy the \textbf{\textit{Navier-Stokes equations}}
\begin{equation}\label{navier-stokes-equations}
    \left\{\begin{array}{ll} \partial_{\tau}U_{j}-\nu\Delta U_{j}+\nabla_{k}(U_{j}U_{k}) + \nabla_{j}P = 0 \quad (1\leq j \leq n), \\ \nabla\cdot U=0, \end{array}\right.
\end{equation}
where the summation convention is used over repeated indices, and the coefficient $\nu>0$ is the \textbf{\textit{kinematic viscosity}} of the fluid. If the fluid is known to have a velocity $F(x)$ at the initial time $\tau=0$ (satisfying $\nabla\cdot F=0$), then the pair $(U,P)$ is a solution to the \textbf{\textit{initial value problem}} consisting of \eqref{navier-stokes-equations} with the initial condition $U(0,x)=F(x)$. By considering the rescaled quantities
\begin{equation}
    t = \nu\tau, \quad u(t,x)=\nu^{-1}U(\nu^{-1}t,x), \quad p(t,x)=\nu^{-2}P(\nu^{-1}t,x), \quad f(x) = \nu^{-1}F(x),
\end{equation}
whose physical dimensions are powers of length alone, we may rewrite the initial value problem in the standardised form
\begin{equation}\label{initial-value-problem}
    \left\{\begin{array}{ll} \partial_{t}u_{j}-\Delta u_{j}+\nabla_{k}(u_{j}u_{k}) + \nabla_{j}p = 0 \quad (1\leq j \leq n), \\ \nabla\cdot u=0, \\ u(0)=f.\end{array}\right.
\end{equation}
At the formal level (a rigorous argument in the context of Lorentz spaces, not needed for the purposes of this paper, is given in \cite{davies2021}), we can eliminate the pressure term by applying the Leray projection
\begin{equation}
    \mathbb{P}_{ij} = \delta_{ij} - \frac{\nabla_{i}\nabla_{j}}{\Delta}
\end{equation}
onto solenoidal vector fields, which transforms \eqref{initial-value-problem} into the nonlinear heat equation
\begin{equation}
    \left\{\begin{array}{ll} \partial_{t}u_{i}-\Delta u_{i}+\mathbb{P}_{ij}\nabla_{k}(u_{j}u_{k}) = 0 \quad (1\leq i \leq n), \\ \nabla\cdot u=0, \\ u(0)=f,\end{array}\right.
\end{equation}
for which we have the corresponding integral equation
\begin{equation}\label{integral-equation}
    u_{i}(t,x) = \int_{\mathbb{R}^{n}}\Phi(t,x-y)f_{i}(y)\,\mathrm{d}y - \int_{0}^{t}\int_{\mathbb{R}^{n}}\nabla_{k}\mathcal{T}_{ij}(t-s,x-y)u_{j}(s,y)u_{k}(s,y)\,\mathrm{d}y\,\mathrm{d}s
\end{equation}
($1\leq i \leq n$), where $\Phi$ is the heat kernel and $\mathcal{T}_{ij}=\mathbb{P}_{ij}\Phi$ is the Oseen kernel (see (\ref{heatkerdef}) - (\ref{oseenkerdef}) below). The theory of so-called \textbf{\textit{mild solutions}} is based on the study of the integral equation \eqref{integral-equation}, dating back to the groundbreaking work of Leray \cite{leray1934}.

For $n=3$, Leray \cite{leray1934} proves that if $T\in(0,\infty)$, $f\in L^{\infty}(\mathbb{R}^{3})$ and $T^{\frac{1}{2}}{\|f\|}_{L^{\infty}(\mathbb{R}^3)}<M$ (where $M$ is some dimensionless constant independent of $f$ and $T$), then there exists a bounded continuous function \linebreak $u\in BC\left((0,T)\times\mathbb{R}^3\right)$ satisfying \eqref{integral-equation} for all $(t,x)\in(0,T)\times\mathbb{R}^{3}$. For each $f\in L^{\infty}(\mathbb{R}^3)$, we therefore obtain a solution $u\in BC\left((0,T)\times\mathbb{R}^3\right)$ for small $T$ by making the dimensionless quantity $T^{\frac{1}{2}}{\|f\|}_{L^{\infty}(\mathbb{R}^3)}$ sufficiently small. In fact, for $f\in C^1(\mathbb{R}^3) \cap H^1(\mathbb{R}^3) \cap L^{\infty}(\mathbb{R}^{3})$, Leray shows\footnote{In fact, the assumption on $f$ can be relaxed to $f\in L^{\infty}(\mathbb{R}^{3}) \cap L^{2}(\mathbb{R}^{3})$, see e.g. \cite{ozanski2017}.} that this solution is completely smooth and hence also satisfies (\ref{initial-value-problem}) on $(0,T)\times\mathbb{R}^{3}$ for an associated pressure $p=p(u)$; in particular, \linebreak $u\in C((0,T);L^\infty(\mathbb{R}^3)) \cap C([0,T);L^2(\mathbb{R}^3))$ which implies uniqueness (cf. Theorem \ref{uniqthm} below) for such a fixed $f$ in his class of `regular' solutions.  Leray observed that his existence criterion, along with the above-mentioned uniqueness which implies a semi-group property (cf. Lemma \ref{semigrouplem} below),  implies a certain \textbf{\textit{blowup estimate}} which we can state roughly\footnote{The accurate statement would be obtained by explicitly defining `regular' as Leray does.} as follows: if a regular solution \linebreak $u\in\cap_{T'\in(0,T)} C((0,T');L^\infty(\mathbb{R}^3)) \cap C([0,T'); L^2(\mathbb{R}^3))$ on a finite time interval $(0,T)$ cannot be extended to a regular solution on $(0,\widetilde{T})$  for any $\widetilde{T}>T$, i.e. $T$ is a finite `blow-up time', then
$${\|u(t_{0})\|}_{L^{\infty}(\mathbb{R}^3)}\geq M{(T-t_{0})}^{-\frac{1}{2}}$$
for all $t_{0}\in(0,T)$. Leray also indicates in \cite{leray1934} how one may use various estimates for his solutions to extend this $L^\infty$ result to $L^p$ for any $p>3$, namely that for each $p>3$, there exists some\footnote{Leray's constant is $M_p:=M(1-\tfrac 3p)$.} $M_p>0$ such that
\begin{equation}\label{lebesgueburate}
{\|u(t_{0})\|}_{L^{p}(\mathbb{R}^3)}\geq M_p{(T-t_{0})}^{-\frac 12(1-\frac{3}{p})}
\end{equation}
for all $t_{0}\in(0,T)$ if $T$ is a time of finite blow-up (see, e.g., \cite{giga1986,robsad} for proofs, or \cite{ozanski2017} for the full details following Leray's outline).

In Theorem \ref{intro-leray} below, we extend\footnote{Note that our semigroup and uniqueness results  (Lemma \ref{semigrouplem} and Theorem \ref{uniqthm}) do not require the assumption that $f\in L^2(\mathbb{R}^n)$.  In fact, with such an additional assumption on $f$, we prove a `weak-strong uniqueness' result in \cite{davies2021} which (along with various estimates established here) implies that that any `Leray-Hopf' {\em weak} solution would coincide, on the common interval of existence, with the solution we construct here. On the other hand, Leray's solutions satisfying \eqref{lebesgueburate} are in fact (strong) mild solutions, i.e. solutions of \eqref{integral-equation}, which is precisely the type (see also Theorem \ref{intro-regularity} below along with the remarks which follow it) of solution we construct here, so that Theorem \ref{intro-leray} can be seen as a true extension of \eqref{lebesgueburate} established by Leray in \cite{leray1934}.} (\ref{lebesgueburate}) from the Lebesgue  setting $L^p$ to the more general Lorentz settings $L^{p,q}$ (see (\ref{lorstarnorm}) - (\ref{basic-relation-2}) below), and in fact on $\mathbb{R}^n$ more generally, for $p>n$.  Note that Theorem \ref{intro-leray} implies (\ref{lebesgueburate}) for $q>p$ (see (\ref{basic-relation-1}) - (\ref{basic-relation-2})). We note that a partial  result in this direction was recently established  by Y. Wang, W. Wei and H. Yu in \cite[Theorem 1.4]{wangetal2021}, as a by-product of an extension of the so-called `epsilon regularity' criterion (see, e.g., \cite{caf}) from the Lebesgue to the Lorentz setting.  The result in \cite{wangetal2021} is a regularity criterion (rather than a blow-up estimate), which is in fact implied\footnote{Effectively, \cite[Theorem 1.4]{wangetal2021} requires (\ref{lrinfburate}) to fail for all $t_0\in (0,T)$ in order to extend the solution beyond $T$, whereas here we show that if (\ref{lrinfburate}) fails for even one $t_0\in (0,T)$ then we may extend the solution beyond $T$.} by Theorem \ref{intro-leray}; moreover our proof is more direct for this purpose.  By standard Sobolev embeddings, Leray's result (\ref{lebesgueburate}) implies similar results in a certain range of subcritical homogeneous Sobolev spaces. We  note here that there has also been much recent interest in extending the subcritical Sobolev range for such results (see for example \cite{benameur2010,benorf2019,chesza,cormon,cormonpin,hwz2020,lorzin,morrvz,rss}).

A key feature of Leray's results is that the quantity ${\|f\|}_{L^{\infty}(\mathbb{R}^n)}$ has physical dimensions ${(\text{length})}^{-1}={(\text{rescaled time})}^{-1/2}$, so multiplying ${\|f\|}_{L^{\infty}(\mathbb{R}^n)}$ by a positive power of $T$ yields a dimensionless quantity. More generally, the quantity ${\|f\|}_{L^{p}(\mathbb{R}^{n})}$ has physical dimensions ${(\text{length})}^{-\left(1-\frac{n}{p}\right)}$, where $1-\frac{n}{p}>0$ for $p>n$ (in which case $L^{p}$ is \textbf{\textit{subcritical}}), and $1-\frac{n}{p}=0$ for $p=n$ (in which case $L^{p}$ is \textbf{\textit{critical}}).\footnote{Note that Leray's setting of $L^2(\mathbb{R}^n)$ is {\bf {\em supercritical}} for $n\geq 3$.} Fabes, Jones and Riviere \cite{fabes1972} consider the problem \eqref{integral-equation} in subcritical $L^{p}$ spaces, with the condition $T^{\frac{1}{2}}{\|f\|}_{L^{\infty}(\mathbb{R}^{n})}<M$ generalised to $T^{\frac{1}{2}\left(1-\frac{n}{p}\right)}{\|f\|}_{L^{p}(\mathbb{R}^{n})}<M_p$. Kato \cite{kato1984} considers the critical case, using methods which provide further estimates for solutions in subcritical spaces. The broader purpose of this paper is to generalise these local well-posedness results in the sub-critical setting by replacing subcritical Lebesgue spaces with subcritical Lorentz spaces.  In \cite{davies2021}, we furthermore establish the equivalence of \eqref{integral-equation} with various notions of weak solutions to (\ref{initial-value-problem}) in the settings we consider below.

To place the integral equation \eqref{integral-equation} on precise foundations, and to abbreviate some of the notation, we make the following definitions. The kernels $\Phi$ and $\mathcal{T}$ are defined by
\begin{equation}\label{heatkerdef}
    \Phi(t,x) := \frac{1}{{(4\pi t)}^{n/2}}e^{-{|x|}^{2}/4t} = \frac{1}{{(2\pi)}^{n}}\int_{\mathbb{R}^{n}}e^{\mathrm{i}x\cdot\xi-t{|\xi|}^{2}}\,\mathrm{d}\xi \quad \text{for }(t,x)\in(0,\infty)\times\mathbb{R}^{n},
\end{equation}
\begin{equation}\label{oseenkerdef}
    \mathcal{T}_{ij}(t,x) := \frac{1}{{(2\pi)}^{n}}\int_{\mathbb{R}^{n}}e^{\mathrm{i}x\cdot\xi-t{|\xi|}^{2}}\left(\delta_{ij}-\frac{\xi_{i}\xi_{j}}{{|\xi|}^{2}}\right)\,\mathrm{d}\xi \quad \text{for }(t,x)\in(0,\infty)\times\mathbb{R}^{n}.
\end{equation}
For $(t,x)\in(0,\infty)\times\mathbb{R}^{n}$ and a measurable function $f:\mathbb{R}^{n}\rightarrow\mathbb{R}^{n}$, we write
\begin{equation}\label{heat-map}
    S_{i}[f](t,x) := \int_{\mathbb{R}^{n}}\Phi(t,x-y)f_{i}(y)\,\mathrm{d}y
\end{equation}
whenever the integral is defined. For $(t,x)\in(0,\infty)\times\mathbb{R}^{n}$ and a measurable function $w:(0,t)\times\mathbb{R}^{n}\rightarrow\mathbb{R}^{n\times n}$, we write
\begin{equation}
    A_{i}[w](t,x) := \int_{0}^{t}\int_{\mathbb{R}^{n}}\nabla_{k}\mathcal{T}_{ij}(t-s,x-y)w_{jk}(s,y)\,\mathrm{d}y\,\mathrm{d}s
\end{equation}
whenever the integral is defined. For $(t,x)\in(0,\infty)\times\mathbb{R}^{n}$ and measurable functions $u,v:(0,t)\times\mathbb{R}^{n}\rightarrow\mathbb{R}^{n}$, we write
\begin{equation}\label{bilinear-operator}
    B_{i}[u,v](t,x) := A_{i}[u\otimes v](t,x)
\end{equation}
whenever $A_{i}[u\otimes v](t,x)$ is defined. For $T\in(0,\infty]$ and a measurable function $f:\mathbb{R}^{n}\rightarrow\mathbb{R}^{n}$, we are interested in measurable functions $u:(0,T)\times\mathbb{R}^{n}\rightarrow\mathbb{R}^{n}$ which satisfy
\begin{equation}\label{fixed-point-problem}
    u(t,x) = S[f](t,x) - B[u,u](t,x)
\end{equation}
for (at least) almost every $(t,x)\in(0,T)\times\mathbb{R}^{n}$.

Following \cite{hunt1966}, for a measurable function $f$ on $\mathbb{R}^{n}$ (and writing $|A|$ to denote the Lebesgue measure of a Borel subset $A\subseteq\mathbb{R}^{n}$), we define the quantities
\begin{equation}
    \lambda_{f}(y) := |\{x\in\mathbb{R}^{n}\text{ : }|f(x)|>y\}| \quad \text{for }y\in(0,\infty),
\end{equation}
\begin{equation}
    f^{*}(t) := \sup\{y\in(0,\infty)\text{ : }\lambda_{f}(y)>t\} \quad \text{for }t\in(0,\infty),
\end{equation}
\begin{equation}\label{lorstarnorm}
    {\|f\|}_{L^{p,q}(\mathbb{R}^{n})}^{*} := \left\{\begin{array}{ll}{\left(\frac{q}{p}\int_{0}^{\infty}{\left[t^{1/p}f^{*}(t)\right]}^{q}\,\frac{\mathrm{d}t}{t}\right)}^{1/q} & p\in[1,\infty),\,q\in[1,\infty), \\ \sup_{t\in(0,\infty)}t^{1/p}f^{*}(t) & p\in[1,\infty],\,q=\infty, \end{array}\right.
\end{equation}
which satisfy
\begin{equation}\label{basic-relation-1}
    {\|f\|}_{L^{p,p}(\mathbb{R}^{n})}^{*} = {\|f\|}_{L^{p}(\mathbb{R}^{n})} \quad \text{for }p\in[1,\infty],
\end{equation}
\begin{equation}\label{basic-relation-2}
    {\|f\|}_{L^{p,q_{2}}(\mathbb{R}^{n})}^{*} \leq {\|f\|}_{L^{p,q_{1}}(\mathbb{R}^{n})}^{*} \quad \text{for }p\in[1,\infty) \text{ and }1\leq q_{1}\leq q_{2}\leq\infty,
\end{equation}
\begin{equation}\label{basic-relation-3}
    {\|f\|}_{L^{p_{1},1}(\mathbb{R}^{n})}^{*} \lesssim_{p_{1},p_{2}} {|\supp(f)|}^{\frac{1}{p_{1}}-\frac{1}{p_{2}}}{\|f\|}_{L^{p_{2},\infty}(\mathbb{R}^{n})}^{*} \quad \text{if }1\leq p_{1}<p_{2}\leq\infty\text{ and }|\supp(f)|<\infty,
\end{equation}
\begin{equation}\label{basic-relation-4}
    \left\{\begin{array}{l} {\|f\|}_{L^{p,1}(\mathbb{R}^{n})}^{*} \lesssim_{p,p_{0},p_{1}} {\left({\|f\|}_{L^{p_{0},\infty}(\mathbb{R}^{n})}^{*}\right)}^{1-\theta} {\left({\|f\|}_{L^{p_{1},\infty}(\mathbb{R}^{n})}^{*}\right)}^{\theta} \\ \text{for }1\leq p_{0}<p_{1}\leq\infty,\text{ }\theta\in(0,1) \text{ and }\frac{1}{p}=\frac{1-\theta}{p_{0}}+\frac{\theta}{p_{1}}. \end{array}\right.
\end{equation}
In the case $p=\infty$, we have ${\|f\|}_{L^{\infty,\infty}(\mathbb{R}^{n})}^{*}={\|f\|}_{L^{\infty}(\mathbb{R}^{n})}=\esssup_{x\in\mathbb{R}^{n}}|f(x)|$. In situations where the actual supremum is desired, we find it convenient to write
\begin{equation}
    {\|f\|}_{L^{\overline{\infty},\infty}(\mathbb{R}^{n})}^{*} = {\|f\|}_{L^{\overline{\infty}}(\mathbb{R}^{n})} = \sup_{x\in\mathbb{R}^{n}}|f(x)|,
\end{equation}
so in general the exponent $p$ takes values in $[1,\overline{\infty}]:=[1,\infty]\cup\{\overline{\infty}\}$ when $q=\infty$. The \textbf{\textit{Lorentz space}} $L^{p,q}(\mathbb{R}^{n})$ consists of those measurable functions $f$ on $\mathbb{R}^{n}$ satisfying  ${\|f\|}_{L^{p,q}(\mathbb{R}^{n})}^{*}<\infty$, modulo the equivalence relation
\begin{equation}
    f =_{p} g \iff \left\{\begin{array}{ll} f =_{\mathrm{a.e.}} g & \text{if }p\in[1,\infty], \\ f =_{\mathrm{e.}} g & \text{if }p=\overline{\infty}, \end{array}\right.
\end{equation}
where $=_{\mathrm{a.e.}}$ and $=_{\mathrm{e.}}$ denote pointwise equality for (almost) every $x\in\mathbb{R}^{n}$. When we consider the intersection $L^{p,q}(\mathbb{R}^{n})\cap L^{\overline{\infty}}(\mathbb{R}^{n})$, we take the more refined equivalence relation $=_{\mathrm{e.}}$. The Lorentz space $L^{p,q}(\mathbb{R}^{n})$ has the same scaling properties as the Lebesgue space $L^{p}(\mathbb{R}^{n})$, so $L^{p,q}(\mathbb{R}^{n})$ is subcritical if and only if $p>n$.

Generalising the ideas of \cite{fabes1972} and \cite{kato1984}, we define the following \textbf{\textit{path spaces}} for $T\in(0,\infty]$, $\sigma\in\mathbb{R}$, $p\in[1,\overline{\infty}]$, and $q,\alpha,\beta\in[1,\infty]$ satisfying $(p\in\{\infty,\overline{\infty}\}\Rightarrow q=\infty)$ and $(\alpha=\infty\Rightarrow\beta=\infty)$. We define $\mathcal{L}_{p,q}^{\alpha,\beta}(T)$ to be the space of measurable functions $u$ on $(0,T)\times\mathbb{R}^{n}$ satisfying
\begin{equation}
    {\|u\|}_{\mathcal{L}_{p,q}^{\alpha,\beta}(T)}^{*} := {\left\|t\mapsto\mathbf{1}_{(0,T)}(t){\|u(t)\|}_{L^{p,q}(\mathbb{R}^{n})}^{*}\right\|}_{L^{\alpha,\beta}(\mathbb{R})}^{*}<\infty,
\end{equation}
modulo the equivalence relation
\begin{equation}
    u =_{p}^{\mathrm{a.e.}} v \iff u(t) =_{p} v(t) \text{ for almost every }t\in(0,T).
\end{equation}
We define $\mathcal{K}_{p,q}^{\sigma}(T)$ to be the space of measurable functions $u$ on $(0,T)\times\mathbb{R}^{n}$ satisfying
\begin{equation}
    {\|u\|}_{\mathcal{K}_{p,q}^{\sigma}(T)}^{*} := \esssup_{t\in(0,T)}t^{-\sigma/2}{\|u(t)\|}_{L^{p,q}(\mathbb{R}^{n})}^{*}<\infty,
\end{equation}
modulo the equivalence relation $=_{p}^{\mathrm{a.e.}}$. We define $\mathcal{J}_{p,q}^{\sigma}(T)$ to be the space of measurable functions $u$ on $(0,T)\times\mathbb{R}^{n}$ satisfying
\begin{equation}
    {\|u\|}_{\mathcal{J}_{p,q}^{\sigma}(T)}^{*} := \sup_{t\in(0,T)}t^{-\sigma/2}{\|u(t)\|}_{L^{p,q}(\mathbb{R}^{n})}^{*}<\infty,
\end{equation}
modulo the equivalence relation
\begin{equation}
    u =_{p}^{\mathrm{e.}} v \iff u(t) =_{p} v(t) \text{ for all }t\in(0,T).
\end{equation}
Let $\mathcal{L}_{p,q}^{\alpha,\beta}(T_{-}):=\cap_{T'\in(0,T)}\mathcal{L}_{p,q}^{\alpha,\beta}(T')$, $\mathcal{K}_{p,q}^{\sigma}(T_{-}):=\cap_{T'\in(0,T)}\mathcal{K}_{p,q}^{\sigma}(T')$ and $\mathcal{J}_{p,q}^{\sigma}(T_{-}):=\cap_{T'\in(0,T)}\mathcal{J}_{p,q}^{\sigma}(T')$. When considering an intersection of path spaces, we always take the more refined equivalence relation. Inequalities \eqref{basic-relation-2}-\eqref{basic-relation-4} (along with \eqref{prodineqlor} below) establish various relations concerning the quantities ${\|u\|}_{\mathcal{L}_{p,q}^{\alpha,\beta}(T)}^{*}$, ${\|u\|}_{\mathcal{K}_{p,q}^{\sigma}(T)}^{*}$ and ${\|u\|}_{\mathcal{J}_{p,q}^{\sigma}(T)}^{*}$, including
\begin{equation}\label{basic-inclusion}
    \left\{\begin{array}{l}
        {\|u\|}_{\mathcal{L}_{p,q}^{\alpha_{1},1}(T)}^{*} \lesssim_{\alpha_{1},\alpha_{2}} T^{\frac{1}{\alpha_{1}}-\frac{1}{\alpha_{2}}}{\|u\|}_{\mathcal{L}_{p,q}^{\alpha_{2},\infty}(T)}^{*} \leq T^{\frac{1}{\alpha_{1}}-\frac{1}{\alpha_{2}}}{\|u\|}_{\mathcal{K}_{p,q}^{-2/\alpha_{2}}(T)}^{*} = T^{\frac{1}{\alpha_{1}}-\frac{1}{\alpha_{2}}}{\|u\|}_{\mathcal{J}_{p,q}^{-2/\alpha_{2}}(T)}^{*} \\
        \text{for }1\leq\alpha_{1}<\alpha_{2}\leq\infty\text{ and }T\in(0,\infty).
    \end{array}\right.
\end{equation}

Our analysis of the equation \eqref{fixed-point-problem} is based the following estimates for the bilinear operator \eqref{bilinear-operator}. We use the convention that $\overline{p}=p$ for all $p\in[1,\infty)$.

\pagebreak

\begin{lemma}\label{intro-lemma}
    \begin{enumerate}[label=(\roman*)]
        \item Assume that $r\in(n,\infty]$, $\alpha\in(2,\infty)$, $\beta\in[1,\infty]$ and $\frac{2}{\alpha}+\frac{n}{r}\leq1$. Let $T\in(0,\infty)$, and let $u,v\in\mathcal{L}_{r,\infty}^{\alpha,\beta}(T)$. Then $B[u,v](t,x)$ is defined for almost every $(t,x)\in(0,T)\times\mathbb{R}^{n}$; in the case $r=\infty$, for almost every $t\in(0,T)$ we have that $B[u,v](t,x)$ is defined for all $x\in\mathbb{R}^{n}$. Moreover, we have the estimate
        \begin{equation}
            {\|B[u,v]\|}_{\mathcal{L}_{\overline{r},\infty}^{\alpha,\beta}(T)}^{*} \lesssim_{n,r,\alpha,\beta} T^{\frac{1}{2}\left(1-\frac{2}{\alpha}-\frac{n}{r}\right)}{\|u\|}_{\mathcal{L}_{r,\infty}^{\alpha,\beta}(T)}^{*}{\|v\|}_{\mathcal{L}_{r,\infty}^{\alpha,\beta}(T)}^{*}.
        \end{equation}
        \item Assume that $r\in(n,\infty]$, $\sigma\in(-1,\infty)$ and $-\sigma+\frac{n}{r}\leq1$. Let $T\in(0,\infty)$, and let $u,v\in\mathcal{K}_{r,\infty}^{\sigma}(T)$. Then for all $t\in(0,T)$ we have that $B[u,v](t,x)$ is defined for almost every $x\in\mathbb{R}^{n}$; in the case $r=\infty$, we have that $B[u,v](t,x)$ is defined for all $(t,x)\in(0,T)\times\mathbb{R}^{n}$. Moreover, we have the estimate
        \begin{equation}\label{intro-bilinear-estimate}
            {\|B[u,v]\|}_{\mathcal{J}_{\overline{r},\infty}^{\sigma}(T)}^{*} \lesssim_{n,r,\sigma} T^{\frac{1}{2}\left(1+\sigma-\frac{n}{r}\right)}{\|u\|}_{\mathcal{K}_{r,\infty}^{\sigma}(T)}^{*}{\|v\|}_{\mathcal{K}_{r,\infty}^{\sigma}(T)}^{*}.
        \end{equation}
    \end{enumerate}
\end{lemma}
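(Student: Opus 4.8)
\emph{Overview and the kernel bound.}
The plan is to reduce both parts to three ingredients: a pointwise bound on the Oseen kernel $\nabla_{k}\mathcal{T}_{ij}$, a Lorentz convolution (Young--O'Neil) inequality applied in the spatial variable, and then either a Beta-function computation (part (ii)) or a second Lorentz convolution inequality (part (i)) applied in the temporal variable. Because the Navier--Stokes scaling is rigid, all powers of $T$ are forced once this structure is in place, so the real work lies in the two convolution steps and in the Fubini/Tonelli bookkeeping behind the ``$B[u,v]$ is defined'' assertions. From \eqref{oseenkerdef} one has the self-similar form $\mathcal{T}_{ij}(\tau,x)=\tau^{-n/2}G_{ij}(x\tau^{-1/2})$ with $G_{ij}$ smooth and $|G_{ij}(z)|\lesssim_{n}(1+|z|)^{-n}$ (since $\mathbb{P}$ is a Calder\'on--Zygmund operator and $\Phi(1,\cdot)$ is Schwartz), hence the standard bound
\[
|\nabla_{k}\mathcal{T}_{ij}(\tau,x)|\lesssim_{n}\big(\tau+|x|^{2}\big)^{-\frac{n+1}{2}},\qquad(\tau,x)\in(0,\infty)\times\mathbb{R}^{n}.
\]
Rescaling gives $\|\nabla_{k}\mathcal{T}_{ij}(\tau,\cdot)\|_{L^{d,\infty}(\mathbb{R}^{n})}^{*}\lesssim_{n,d}\tau^{-\frac{n+1}{2}+\frac{n}{2d}}$ for every $d\in[1,\infty]$; in particular $\nabla_{k}\mathcal{T}_{ij}(\tau,\cdot)\in L^{1}(\mathbb{R}^{n})$ with $L^{1}$-norm $\lesssim_{n}\tau^{-1/2}$, which is the case $r=\infty$.

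\emph{Spatial convolution.}
Fix $0<s<t$ and write $w=u\otimes v$. The product inequality \eqref{prodineqlor} gives $\|w(s)\|_{L^{r/2,\infty}}^{*}\lesssim_{n}\|u(s)\|_{L^{r,\infty}}^{*}\|v(s)\|_{L^{r,\infty}}^{*}$, with $r/2$ read as $\infty$ when $r=\infty$. Since $\tfrac1{\bar r}+1=\tfrac1{r'}+\tfrac2r$ with $r'=\tfrac{r}{r-1}\in(1,\tfrac{n}{n-1}]$ for $r\in(n,\infty)$ (and $r'=1$ when $r=\infty$, where one uses Young's inequality directly), the Lorentz convolution inequality together with the kernel bound at $d=r'$ yields, using $-\tfrac{n+1}{2}+\tfrac{n}{2r'}=-\tfrac12\big(1+\tfrac nr\big)$,
\[
\big\|\nabla_{k}\mathcal{T}_{ij}(t-s,\cdot)\ast w_{jk}(s,\cdot)\big\|_{L^{\bar r,\infty}(\mathbb{R}^{n})}^{*}\lesssim_{n,r}(t-s)^{-\frac12(1+\frac nr)}\|u(s)\|_{L^{r,\infty}}^{*}\|v(s)\|_{L^{r,\infty}}^{*}.
\]
Running this estimate with absolute values inside and invoking Tonelli shows that the integral defining $B_{i}[u,v](t,x)$ converges absolutely for a.e.\ $x$ (and, when $r=\infty$, for every $x$, the spatial convolution then being continuous in $x$ by the $L^{1}$-kernel bound), which gives the definedness claims; and by Minkowski's integral inequality in $L^{\bar r,\infty}(\mathbb{R}^{n})$ (used up to a constant, as $\bar r>1$; trivial if $r=\infty$) the displayed bound holds with $B_{i}[u,v](t,\cdot)$ in place of the convolution and $\int_{0}^{t}(\,\cdot\,)\,\mathrm{d}s$ on the right.

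\emph{Temporal integration.}
For part (ii), substituting $\|u(s)\|_{L^{r,\infty}}^{*}\le s^{\sigma/2}\|u\|_{\mathcal{K}_{r,\infty}^{\sigma}(T)}^{*}$ (and likewise for $v$) into the previous step and computing the Beta integral
\[
\int_{0}^{t}(t-s)^{-\frac12(1+\frac nr)}s^{\sigma}\,\mathrm{d}s=t^{\frac12-\frac{n}{2r}+\sigma}\,B\!\Big(1-\tfrac12\big(1+\tfrac nr\big),\,1+\sigma\Big),
\]
which converges precisely because $r>n$ and $\sigma>-1$, then multiplying by $t^{-\sigma/2}$ and taking $\sup_{t\in(0,T)}$ (legitimate since $1+\sigma-\tfrac nr\ge0$ by hypothesis), gives \eqref{intro-bilinear-estimate}. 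For part (i), set $\phi(s)=\mathbf{1}_{(0,T)}(s)\|u(s)\|_{L^{r,\infty}}^{*}$ and $\psi$ analogously, so $\phi,\psi\in L^{\alpha,\beta}(\mathbb{R})$ and, by the Lorentz H\"older inequality, $\phi\psi\in L^{\alpha/2,\beta/2}(\mathbb{R})$ with $\|\phi\psi\|_{L^{\alpha/2,\beta/2}}^{*}\lesssim\|\phi\|_{L^{\alpha,\beta}}^{*}\|\psi\|_{L^{\alpha,\beta}}^{*}$ (here $\alpha/2>1$ since $\alpha>2$). The spatial step gives $\mathbf{1}_{(0,T)}(t)\|B[u,v](t)\|_{L^{\bar r,\infty}}^{*}\lesssim_{n,r}\big(K\ast(\phi\psi)\big)(t)$ with $K(\tau)=\tau^{-\frac12(1+\frac nr)}\mathbf{1}_{(0,T)}(\tau)$. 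The exponent $\tfrac12(1+\tfrac nr)$ lies in $(\tfrac12,1)$, and the hypothesis $\tfrac2\alpha+\tfrac nr\le1$ forces $\alpha'\le\tfrac{2r}{r+n}=\big(\tfrac12(1+\tfrac nr)\big)^{-1}$, so $K\in L^{\alpha',\infty}(\mathbb{R})$ with $\|K\|_{L^{\alpha',\infty}}^{*}\lesssim_{n,r,\alpha}T^{\frac1{\alpha'}-\frac12(1+\frac nr)}=T^{\frac12(1-\frac2\alpha-\frac nr)}$ (the exponent being $0$ at the endpoint $\tfrac2\alpha+\tfrac nr=1$, where $\|K\|_{L^{\alpha',\infty}}^{*}\lesssim1$). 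A final Young--O'Neil inequality in one variable, $\|K\ast(\phi\psi)\|_{L^{\alpha,\beta}}^{*}\lesssim\|K\|_{L^{\alpha',\infty}}^{*}\|\phi\psi\|_{L^{\alpha/2,\beta/2}}^{*}$ (valid since $1+\tfrac1\alpha=\tfrac1{\alpha'}+\tfrac2\alpha$ and $\tfrac1\beta\le\tfrac2\beta$), combines with the above to give the claimed estimate.

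\emph{Main obstacle.}
The scaling being forced, the genuine difficulty is in the two convolution inequalities at the borderline indices: at the endpoint $\tfrac2\alpha+\tfrac nr=1$ the temporal kernel lies only in the weak space $L^{\alpha',\infty}$, so the endpoint form of Young--O'Neil is needed, and when $\beta=1$ the secondary index $\beta/2<1$ pushes one below the Banach range of Lorentz spaces. One must also keep in mind that $L^{\bar r,\infty}$ and the time Lorentz spaces are only quasi-normed, so Minkowski's integral inequality and the convolution inequalities are invoked in their ``up to a constant'' forms, and one must attend carefully to the Tonelli argument underlying the definedness assertions and to the measurability of all quantities involved.
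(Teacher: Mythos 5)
Your argument is correct and follows the same skeleton as the paper's own proof (Lemma \ref{semigrouplem} combined with the product estimates \eqref{product-estimate-1}--\eqref{product-estimate-2}): a slice-wise spatial estimate producing the factor ${(t-s)}^{-\frac{1}{2}\left(1+\frac{n}{r}\right)}{\|u(s)\|}_{L^{r,\infty}}^{*}{\|v(s)\|}_{L^{r,\infty}}^{*}$, then a Beta integral for part (ii), and for part (i) the interpretation of the time integral as a one-dimensional convolution with the kernel $\mathbf{1}_{(0,T)}(\tau)\tau^{-\frac{1}{2}\left(1+\frac{n}{r}\right)}\in L^{\alpha',\infty}(\mathbb{R})$, whose localized weak norm supplies the power $T^{\frac{1}{2}\left(1-\frac{2}{\alpha}-\frac{n}{r}\right)}$ exactly as the paper does via \eqref{local-embedding}. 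The only genuinely different step is the spatial one: you invoke the weak-type Young--O'Neil inequality $L^{r',\infty}*L^{\frac{r}{2},\infty}\rightarrow L^{r,\infty}$ with the kernel measured in $L^{r',\infty}$, whereas the paper interpolates between an $L^{\frac{r}{2},\infty}$ bound (kernel in $L^{1}$) and an $L^{\overline{\infty}}$ bound (kernel in $L^{(\frac{r}{2})',1}$ paired against $w(s)\in L^{\frac{r}{2},\infty}$); both tools are available from Section 3, and the paper's version has the side benefits of explicit constants (reused for $\eta_{n,r}$ in Section 7) and of giving the genuine supremum when $\overline{r}=\overline{\infty}$ without a separate continuity remark. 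One point you flag but do not actually resolve: when $\beta<2$ your product space $L^{\alpha/2,\beta/2}(\mathbb{R})$ has second index below $1$ and lies outside the framework used here, so the final convolution inequality cannot be quoted as stated. The repair is precisely the paper's device of working with the index $1\vee\frac{\beta}{2}$: by \eqref{prodineqlor} and H\"older, ${\|\phi\psi\|}_{L^{\alpha/2,1}(\mathbb{R})}^{*}\lesssim{\|\phi\|}_{L^{\alpha,2}(\mathbb{R})}^{*}{\|\psi\|}_{L^{\alpha,2}(\mathbb{R})}^{*}\leq{\|\phi\|}_{L^{\alpha,\beta}(\mathbb{R})}^{*}{\|\psi\|}_{L^{\alpha,\beta}(\mathbb{R})}^{*}$ for $\beta\leq2$ by \eqref{basic-relation-2}, and the convolution inequality with second index $1\vee\frac{\beta}{2}\leq\beta$ still lands in $L^{\alpha,\beta}(\mathbb{R})$. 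With that one-line patch your proof is complete and matches \eqref{uniqueness-bilinear-estimate} and \eqref{existence-bilinear-estimate}.
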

If $p\in[1,\infty]$ and $f\in L^{p,\infty}(\mathbb{R}^{n})$, then we can show that $S[f](t,x)$ is defined for all $(t,x)\in(0,\infty)\times\mathbb{R}^{n}$. The uniqueness result of \cite{fabes1972} has the following generalisation. We use the convention that $\frac{1}{r}=0$ whenever $r\in\{\infty,\overline{\infty}\}$.
\begin{theorem}\label{intro-uniqueness}
    Assume that $p\in(1,\infty]$, $r\in(n,\overline{\infty}]$, $\alpha\in(2,\infty)$, $\beta\in[1,\infty)$, and $\frac{2}{\alpha}+\frac{n}{r}\leq1$. If $T\in(0,\infty]$, $f\in L^{p,\infty}(\mathbb{R}^{n})$, and $u,v\in\mathcal{L}_{r,\infty}^{\alpha,\beta}(T_{-})$ satisfy $S[f] =_{r}^{\mathrm{a.e.}} u+B[u,u] =_{r}^{\mathrm{a.e.}} v+B[v,v]$, then $u =_{r}^{\mathrm{a.e.}} v$.
\end{theorem}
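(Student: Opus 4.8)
The plan is the classical one for mild solutions (cf.\ the argument of Fabes--Jones--Rivi\`ere \cite{fabes1972} and Kato \cite{kato1984}): reduce to a fixed-point contraction for the difference $w:=u-v$ on a short time interval, then propagate uniqueness across all of $(0,T)$ by a time-translation (semigroup-type) continuation argument. Since $S[f]$ is a well-defined function, subtracting the two identities $u+B[u,u]=_{r}^{\mathrm{a.e.}}S[f]=_{r}^{\mathrm{a.e.}}v+B[v,v]$ makes the data cancel, and bilinearity of $B$ gives, for almost every $(t,x)\in(0,T)\times\mathbb{R}^{n}$,
$$w = -B[u,w]-B[w,v],$$
with $w\in\mathcal{L}_{r,\infty}^{\alpha,\beta}(T_{-})$ because the path spaces are linear; the two terms on the right are defined a.e.\ by Lemma \ref{intro-lemma}(i) applied on each subinterval $(0,T')\subset(0,T)$.

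First I would establish short-time uniqueness: there exists $\delta>0$ with $u=_{r}^{\mathrm{a.e.}}v$ on $(0,\delta)$. Fix a finite $T_{0}\in(0,T)$ and take $\delta\le\min(1,T_{0})$. The restrictions of $u,v,w$ to $(0,\delta)$ lie in $\mathcal{L}_{r,\infty}^{\alpha,\beta}(\delta)$ (their quasinorms there are bounded by those on $(0,T_{0})$), so applying Lemma \ref{intro-lemma}(i) with $T=\delta$ to each term of the difference equation gives
$$\|w\|_{\mathcal{L}_{r,\infty}^{\alpha,\beta}(\delta)}^{*} \le C_{n,r,\alpha,\beta}\,\delta^{\frac{1}{2}(1-\frac{2}{\alpha}-\frac{n}{r})}\bigl(\|u\|_{\mathcal{L}_{r,\infty}^{\alpha,\beta}(\delta)}^{*}+\|v\|_{\mathcal{L}_{r,\infty}^{\alpha,\beta}(\delta)}^{*}\bigr)\|w\|_{\mathcal{L}_{r,\infty}^{\alpha,\beta}(\delta)}^{*},$$
where, if $r\in\{\infty,\overline{\infty}\}$, one uses $\|\cdot\|_{L^{\infty}}^{*}\le\|\cdot\|_{L^{\overline{\infty}}}^{*}$ to move the output of Lemma \ref{intro-lemma}(i) from $\mathcal{L}_{\overline{r},\infty}^{\alpha,\beta}(\delta)$ back into $\mathcal{L}_{r,\infty}^{\alpha,\beta}(\delta)$. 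The exponent $\tfrac12(1-\tfrac2\alpha-\tfrac nr)$ is $\ge0$ so $\delta^{\,\cdot}\le1$; and since $\alpha<\infty$ and $\beta<\infty$ the $L^{\alpha,\beta}(\mathbb{R})$-quasinorm is absolutely continuous, so $\|u\|_{\mathcal{L}_{r,\infty}^{\alpha,\beta}(\delta)}^{*}+\|v\|_{\mathcal{L}_{r,\infty}^{\alpha,\beta}(\delta)}^{*}\to0$ as $\delta\to0^{+}$. Choosing $\delta$ so that this sum is $<(2C_{n,r,\alpha,\beta})^{-1}$, and using $\|w\|_{\mathcal{L}_{r,\infty}^{\alpha,\beta}(\delta)}^{*}<\infty$, forces $\|w\|_{\mathcal{L}_{r,\infty}^{\alpha,\beta}(\delta)}^{*}=0$, i.e.\ $u=_{r}v$ a.e.\ on $(0,\delta)$.

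To globalise, set $\tau^{*}:=\sup\{\tau\in(0,T):u=_{r}^{\mathrm{a.e.}}v\text{ on }(0,\tau)\}\ge\delta>0$; a countable union shows $u=_{r}^{\mathrm{a.e.}}v$ on $(0,\tau^{*})$. Suppose $\tau^{*}<T$, and put $\widetilde{u}(s,\cdot):=u(\tau^{*}+s,\cdot)$, $\widetilde{v}(s,\cdot):=v(\tau^{*}+s,\cdot)$, $\widetilde{w}:=\widetilde{u}-\widetilde{v}$ on $(0,T-\tau^{*})\times\mathbb{R}^{n}$. Translation invariance and monotonicity of decreasing rearrangements give $\widetilde{u},\widetilde{v}\in\mathcal{L}_{r,\infty}^{\alpha,\beta}((T-\tau^{*})_{-})$; and because $w(s)=_{r}0$ for a.e.\ $s<\tau^{*}$, the $\int_{0}^{\tau^{*}}$ portions of $B[u,w]$ and $B[w,v]$ vanish, so the change of variables $s\mapsto\tau^{*}+s$ turns the difference equation into $\widetilde{w}=-B[\widetilde{u},\widetilde{w}]-B[\widetilde{w},\widetilde{v}]$ on $(0,T-\tau^{*})$. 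Applying the short-time step to this shifted problem produces $\delta'>0$ with $\widetilde{w}=_{r}0$ a.e.\ on $(0,\delta')$, i.e.\ $u=_{r}v$ a.e.\ on $(\tau^{*},\tau^{*}+\delta')$, hence on $(0,\tau^{*}+\delta')$, contradicting the maximality of $\tau^{*}$. Therefore $\tau^{*}=T$ and $u=_{r}^{\mathrm{a.e.}}v$.

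I expect the genuine difficulty to be the borderline case $\tfrac2\alpha+\tfrac nr=1$, where Lemma \ref{intro-lemma}(i) gives no positive power of the time length and the naive ``shrink $T$'' strategy fails; the resolution is precisely the absolute continuity of the $L^{\alpha,\beta}$-quasinorm when $\beta<\infty$, which makes $\|u\|_{\mathcal{L}_{r,\infty}^{\alpha,\beta}(\delta)}^{*}$ and $\|v\|_{\mathcal{L}_{r,\infty}^{\alpha,\beta}(\delta)}^{*}$ themselves small on short intervals and so substitutes for the missing factor $\delta^{\varepsilon}$ --- this is exactly why the hypothesis reads $\beta\in[1,\infty)$ rather than $[1,\infty]$. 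The remaining points are bookkeeping: checking the exponent conventions ($\overline{r}$ and $p=\overline{\infty}$) so the contraction closes inside $\mathcal{L}_{r,\infty}^{\alpha,\beta}$ when $r\in\{\infty,\overline{\infty}\}$, confirming the restrictions of $u,v,w$ to $(0,\delta)$ satisfy the hypotheses of Lemma \ref{intro-lemma}(i), and verifying the translation-invariance and monotonicity properties of Lorentz quasinorms used in the continuation step.
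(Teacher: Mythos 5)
Your proposal is correct, but its globalisation step takes a genuinely different route from the paper's. The paper proves Theorem \ref{uniqthm} through two auxiliary lemmas --- an a priori bound (the quadratic-root $\Lambda$ argument, which uses continuity of $t\mapsto Ct^{\frac{1}{2}\left(1-\frac{2}{\alpha}-\frac{n}{r}\right)}{\|u\|}_{\mathcal{L}_{r,\infty}^{\alpha,\beta}(t)}$ and its vanishing as $t\rightarrow0$) and a small-data uniqueness lemma --- and then continues past $T_{0}$ by restarting the full nonlinear fixed-point problem at a time $T_{0}-\epsilon$, which requires the semigroup identities \eqref{heat-semigroup-property} and \eqref{uniqueness-semigroup}, a careful choice of $\epsilon$ so that $u(T_{0}-\epsilon)=_{r}v(T_{0}-\epsilon)$ (the equality being only a.e.\ in time), and an estimate showing the restarted data $S[u(T_{0}-\epsilon)]$ is small in $\mathcal{L}_{r,\infty}^{\alpha,\beta}(2\epsilon)$. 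You instead work directly with the linear equation $w=-B[u,w]-B[w,v]$ for the difference: your short-time step absorbs ${\|w\|}^{*}$ using smallness of ${\|u\|}_{\mathcal{L}_{r,\infty}^{\alpha,\beta}(\delta)}^{*}+{\|v\|}_{\mathcal{L}_{r,\infty}^{\alpha,\beta}(\delta)}^{*}$ as $\delta\rightarrow0$ (the same ``dominated convergence in $L^{\alpha,\beta}(\mathbb{R})$'' mechanism the paper invokes, and indeed the same place where $\beta<\infty$ is essential, as you correctly identify), and your continuation step exploits that $w$ vanishes a.e.\ on $(0,\tau^{*})$, so the Duhamel integral over $(0,\tau^{*})$ drops out and the shifted difference solves the same linear equation --- no semigroup identity for $A$, no restart of the nonlinear problem, no a priori $\Lambda$-bound, and no need to select a restart time at which $u$ and $v$ agree. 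What the paper's route buys is reusability: the small-data lemmas and the restart technique via \eqref{heat-semigroup-property}, \eqref{uniqueness-semigroup} and \eqref{existence-semigroup} are exactly what the existence, blowup-rate and regularity arguments use again, whereas your route is leaner and even dispenses with any role for $f$ beyond the cancellation of $S[f]$. When writing it up, do justify the splitting of the Duhamel integral at $\tau^{*}$ (absolute convergence of $B[u,w](t,x)$ for a.e.\ $(t,x)$, supplied by Lemma \ref{intro-lemma}(i)), and account for the quasi-triangle constants of ${\|\cdot\|}_{L^{r,\infty}(\mathbb{R}^{n})}^{*}$ and ${\|\cdot\|}_{L^{\alpha,\beta}(\mathbb{R})}^{*}$ (or pass to the equivalent norms, as the paper does in \eqref{uni-bilinear}); these are harmless but should be stated.
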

Estimates \eqref{basic-inclusion} and \eqref{intro-bilinear-estimate} allow us to deduce further uniqueness results.
\begin{corollary}\label{intro-uniqueness-corollary}
    Assume that $p\in(1,\infty]$, $r\in(n,\overline{\infty}]$, $\sigma\in\mathbb{R}$, and $-\sigma+\frac{n}{r}<1$. Let $T\in(0,\infty]$ and $f\in L^{p,\infty}(\mathbb{R}^{n})$.
    \begin{enumerate}[label=(\roman*)]
        \item For any $\alpha\in(2,\infty)$ satisfying $-\sigma+\frac{n}{r}<\frac{2}{\alpha}+\frac{n}{r}\leq1$, and any $\beta\in[1,\infty)$, if $u\in\mathcal{K}_{r,\infty}^{\sigma}(T_{-})$ and $v\in\mathcal{L}_{r,\infty}^{\alpha,\beta}(T_{-})$ satisfy $S[f] =_{r}^{\mathrm{a.e.}} u+B[u,u] =_{r}^{\mathrm{a.e.}} v+B[v,v]$, then $u =_{r}^{\mathrm{a.e.}} v$.
        \item If $u,v\in\mathcal{J}_{r,\infty}^{\sigma}(T_{-})$ satisfy $S[f] =_{r}^{\mathrm{e.}} u+B[u,u] =_{r}^{\mathrm{e.}} v+B[v,v]$, then $u =_{r}^{\mathrm{e.}} v$.
    \end{enumerate}
\end{corollary}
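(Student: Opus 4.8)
The plan is to deduce both parts from Theorem~\ref{intro-uniqueness}, after first observing that the structural hypotheses force the functions in question into a space $\mathcal{L}_{r,\infty}^{\alpha,\beta}(T_{-})$ to which that theorem applies.

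The key point is an embedding: under the hypotheses of part (i), $\mathcal{K}_{r,\infty}^{\sigma}(T_{-}) \subseteq \mathcal{L}_{r,\infty}^{\alpha,\beta}(T_{-})$. To prove it, fix $T' \in (0,T)$. If $\sigma \geq 0$, the weight $t^{\sigma/2}$ is bounded on $(0,T')$, so $\mathcal{K}_{r,\infty}^{\sigma}(T') \subseteq \mathcal{K}_{r,\infty}^{0}(T')$; writing $0 = -2/\infty$ and using $\alpha \in (2,\infty)$, the chain \eqref{basic-inclusion} with $\alpha_{1} = \alpha$ and $\alpha_{2} = \infty$ gives $\mathcal{K}_{r,\infty}^{0}(T') \subseteq \mathcal{L}_{r,\infty}^{\alpha,1}(T')$. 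If $\sigma < 0$, the condition $-\sigma + \tfrac{n}{r} < \tfrac{2}{\alpha} + \tfrac{n}{r}$ rearranges to $\alpha < -\tfrac{2}{\sigma} =: \alpha_{2}$, which lies in $(2,\infty)$ since $-\sigma < \tfrac{2}{\alpha} < 1$; as $\sigma = -2/\alpha_{2}$, \eqref{basic-inclusion} with $\alpha_{1} = \alpha$ and this $\alpha_{2}$ gives $\mathcal{K}_{r,\infty}^{\sigma}(T') = \mathcal{K}_{r,\infty}^{-2/\alpha_{2}}(T') \subseteq \mathcal{L}_{r,\infty}^{\alpha,1}(T')$. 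In either case, applying \eqref{basic-relation-2} to the outer exponent yields $\mathcal{L}_{r,\infty}^{\alpha,1}(T') \subseteq \mathcal{L}_{r,\infty}^{\alpha,\beta}(T')$ for $\beta \geq 1$, and intersecting over $T' \in (0,T)$ gives the embedding.

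Part (i) is then immediate: the given $u \in \mathcal{K}_{r,\infty}^{\sigma}(T_{-})$ lies in $\mathcal{L}_{r,\infty}^{\alpha,\beta}(T_{-})$, as does $v$ by hypothesis, and since $p \in (1,\infty]$, $r \in (n,\overline{\infty}]$, $\alpha \in (2,\infty)$, $\beta \in [1,\infty)$ and $\tfrac{2}{\alpha} + \tfrac{n}{r} \leq 1$, Theorem~\ref{intro-uniqueness} applies to $S[f] =_{r}^{\mathrm{a.e.}} u + B[u,u] =_{r}^{\mathrm{a.e.}} v + B[v,v]$ and gives $u =_{r}^{\mathrm{a.e.}} v$. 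For part (ii), since $\esssup \leq \sup$ we have $\mathcal{J}_{r,\infty}^{\sigma}(T_{-}) \subseteq \mathcal{K}_{r,\infty}^{\sigma}(T_{-})$, and the strict inequality $-\sigma + \tfrac{n}{r} < 1$ permits a choice of $\alpha \in (2,\infty)$ and $\beta \in [1,\infty)$ (say $\beta = 1$) meeting the constraints of part (i); the embedding above then puts both $u$ and $v$ in $\mathcal{L}_{r,\infty}^{\alpha,\beta}(T_{-})$, and since $=_{r}^{\mathrm{e.}}$ implies $=_{r}^{\mathrm{a.e.}}$, Theorem~\ref{intro-uniqueness} gives $u =_{r}^{\mathrm{a.e.}} v$, i.e.\ $u(t) =_{r} v(t)$ for almost every $t \in (0,T)$. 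To upgrade this to $=_{r}^{\mathrm{e.}}$, note that $B[u,u](t,x)$ is, by definition, an integral over $(0,t) \times \mathbb{R}^{n}$, hence unaffected by modifying $u$ on a set of times of measure zero; therefore $B[u,u] = B[v,v]$ wherever both are defined, which by Lemma~\ref{intro-lemma}(ii) (applicable as $-\sigma + \tfrac{n}{r} \leq 1$) is for every $t$ in the sense appropriate to $=_{r}$. Feeding this back into $u + B[u,u] =_{r}^{\mathrm{e.}} v + B[v,v]$ gives $u(t) =_{r} v(t)$ for every $t$, i.e.\ $u =_{r}^{\mathrm{e.}} v$.

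There is no serious analytic obstacle here, since all the hard estimates reside in Lemma~\ref{intro-lemma} and Theorem~\ref{intro-uniqueness}. The points demanding care are the case split on the sign of $\sigma$ in the embedding step — choosing the auxiliary exponent $\alpha_{2}$ so that \eqref{basic-inclusion} applies directly — the verification that the admissible range of $(\alpha,\beta)$ in part (ii) is nonempty (which is precisely where the strict inequality $-\sigma + \tfrac{n}{r} < 1$ is used), and the final bootstrap from almost-everywhere-in-time to everywhere-in-time equality, which hinges on $B$ depending on its arguments only through their equivalence classes under $=_{r}^{\mathrm{a.e.}}$.
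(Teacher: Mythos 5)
Your proof is correct and follows essentially the same route as the paper: part (i) is the embedding $\mathcal{K}_{r,\infty}^{\sigma}(T_{-})\subseteq\mathcal{L}_{r,\infty}^{\alpha,\beta}(T_{-})$ via \eqref{basic-inclusion} (which you spell out with the case split on the sign of $\sigma$, a detail the paper leaves implicit) followed by Theorem \ref{intro-uniqueness}, and part (ii) upgrades $=_{r}^{\mathrm{a.e.}}$ to $=_{r}^{\mathrm{e.}}$ by noting that the $=_{r}^{\mathrm{e.}}$-class of $B[u,u]$ depends only on the $=_{r}^{\mathrm{a.e.}}$-class of $u$. The paper justifies that last point by citing the bilinear estimate \eqref{intro-bilinear-estimate}, while you argue it directly from the integral definition of $B$ together with Lemma \ref{intro-lemma}(ii); this is an inessential variation.
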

\begin{proof}
    Part (i) follows immediately from \eqref{basic-inclusion} and Theorem \ref{intro-uniqueness}. For part (ii), we have $u =_{r}^{\mathrm{a.e.}} v$ by \eqref{basic-inclusion} and Theorem \ref{intro-uniqueness}, so it remains to show that $u =_{r}^{\mathrm{e.}} v$; by \eqref{intro-bilinear-estimate}, the $=_{r}^{\mathrm{e.}}$-equivalence class of $B[u,u]$ is determined by the $=_{r}^{\mathrm{a.e.}}$-equivalence class of $u$, so from $u+B[u,u] =_{r}^{\mathrm{e.}} v+B[v,v]$ and $u =_{r}^{\mathrm{a.e.}} v$ we deduce that $u =_{r}^{\mathrm{e.}} v$.
\end{proof}
The existence result of \cite{kato1984} has the following generalisation in subcritical Lorentz spaces. We write $r'\in[1,\infty]$ to denote the H\"{o}lder conjugate of $r\in[1,\infty]$.  In what follows, we will denote
\begin{equation}
    u =_{\mathrm{e.}}^{\mathrm{e.}} v \iff u(t) =_{\mathrm{e.}} v(t) \text{ for all }t\in(0,T) \iff u\equiv v \ \textrm{on} \ (0,T)\times \mathbb{R}^n.
\end{equation}
\begin{theorem}\label{intro-existence}
    For every $r\in(n,\infty]$, there exists a positive constant $\eta_{n,r}\lesssim_{n}r'\int_{0}^{1}{(1-s)}^{-\frac{1}{2}}s^{-\frac{n}{r}}\,\mathrm{d}s$ such that the following holds. For $p\in(1,\infty]$ and $q\in[1,\infty]$ satisfying $(p=\infty\Rightarrow q=\infty)$, if $T\in(0,\infty)$, $f\in L^{p,q}(\mathbb{R}^{n})\cap L^{r,\infty}(\mathbb{R}^{n})$, and $4\eta_{n,r}T^{\frac{1}{2}\left(1-\frac{n}{r}\right)}{\|f\|}_{L^{r,\infty}(\mathbb{R}^{n})}^{*}<1$, then there exists $u\in\mathcal{J}_{p,q}^{0}(T)\cap\mathcal{J}_{\overline{\infty},\infty}^{-n/r}(T)$ satisfying $u =_{\mathrm{e.}}^{\mathrm{e.}} S[f] - B[u,u]$.
\end{theorem}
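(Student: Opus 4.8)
The plan is a two-stage fixed-point argument: we first construct $u$ in a single $L^{\infty}$-type path space whose smallness condition is governed by the hypothesis, and then upgrade its integrability.

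\textbf{Stage 1: construction in $X:=\mathcal{J}_{\overline{\infty},\infty}^{-n/r}(T)$.} This is a Banach space under ${\|u\|}_{X}^{*}=\sup_{t\in(0,T)}t^{n/(2r)}\sup_{x\in\mathbb{R}^{n}}|u(t,x)|$ with the everywhere-equality relation $=_{\mathrm{e.}}^{\mathrm{e.}}$. Two estimates drive the argument. \emph{(Linear.)} Splitting $f=f\mathbf{1}_{\{|f|\le\lambda\}}+f\mathbf{1}_{\{|f|>\lambda\}}$ and optimising the cut-off $\lambda$, using ${\|\Phi(t)\|}_{L^{1}(\mathbb{R}^{n})}=1$, ${\|\Phi(t)\|}_{L^{\infty}(\mathbb{R}^{n})}={(4\pi t)}^{-n/2}$, and the weak-type bound $|\{|f|>\lambda\}|\le{({\|f\|}_{L^{r,\infty}(\mathbb{R}^{n})}^{*}/\lambda)}^{r}$, one obtains ${\|S[f](t)\|}_{L^{\overline{\infty}}(\mathbb{R}^{n})}\lesssim_{n}r'\,t^{-n/(2r)}{\|f\|}_{L^{r,\infty}(\mathbb{R}^{n})}^{*}$ (using the continuity of $S[f](t,\cdot)$ to pass from the essential to the genuine supremum), so $S[f]\in X$ with $A:={\|S[f]\|}_{X}^{*}\lesssim_{n}r'{\|f\|}_{L^{r,\infty}(\mathbb{R}^{n})}^{*}$. \emph{(Bilinear.)} Lemma \ref{intro-lemma}(ii), applied with parameter $\infty$ in place of $r$ and with $\sigma=-n/r\in(-1,0)$ (admissible, since $-\sigma+\tfrac{n}{\infty}=\tfrac{n}{r}\le1$), together with ${\|\cdot\|}_{\mathcal{K}_{\infty,\infty}^{-n/r}(T)}^{*}\le{\|\cdot\|}_{X}^{*}$ and the fact---from the same $r=\infty$ case of that lemma---that $B[u,v]$ is then defined for all $(t,x)$, gives
\[
{\|B[u,v]\|}_{X}^{*}\;\le\;K\,{\|u\|}_{X}^{*}{\|v\|}_{X}^{*},\qquad K\;\lesssim_{n}\;\left(\int_{0}^{1}{(1-s)}^{-\frac{1}{2}}s^{-\frac{n}{r}}\,\mathrm{d}s\right)T^{\frac{1}{2}\left(1-\frac{n}{r}\right)},
\]
the Beta-function factor being exactly the time integral $\int_{0}^{t}{(t-s)}^{-1/2}s^{\sigma}\,\mathrm{d}s=t^{\frac{1}{2}(1+\sigma)}\int_{0}^{1}{(1-s)}^{-1/2}s^{\sigma}\,\mathrm{d}s$ produced in the proof of the lemma after inserting ${\|\nabla\mathcal{T}(\tau)\|}_{L^{1}(\mathbb{R}^{n})}\lesssim_{n}\tau^{-1/2}$ and ${\|(u\otimes v)(s)\|}_{L^{\infty}(\mathbb{R}^{n})}\le{\|u(s)\|}_{L^{\infty}(\mathbb{R}^{n})}{\|v(s)\|}_{L^{\infty}(\mathbb{R}^{n})}$. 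Choosing $\eta_{n,r}$ to be an $n$-dependent multiple of $r'\int_{0}^{1}{(1-s)}^{-1/2}s^{-n/r}\,\mathrm{d}s$, large enough for all of Stage 1 and Stage 2, the hypothesis $4\eta_{n,r}T^{\frac{1}{2}(1-\frac{n}{r})}{\|f\|}_{L^{r,\infty}(\mathbb{R}^{n})}^{*}<1$ implies the classical smallness $4KA<1$, under which $\Psi(v):=S[f]-B[v,v]$ maps the closed ball $\{{\|v\|}_{X}^{*}\le2A\}\subseteq X$ into itself and contracts it. The Picard iterates $u_{0}:=S[f]$, $u_{k+1}:=\Psi(u_{k})$---each everywhere defined---thus converge in ${\|\cdot\|}_{X}^{*}$ to a limit $u\in X$ with ${\|u\|}_{X}^{*}\le2A$ and $u=_{\mathrm{e.}}^{\mathrm{e.}}S[f]-B[u,u]$.

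\textbf{Stage 2: upgrade to $\mathcal{J}_{p,q}^{0}(T)$.} Since $p\in(1,\infty]$, put on $L^{p,q}(\mathbb{R}^{n})$ the equivalent \emph{norm} ${\|\cdot\|}_{L^{(p,q)}(\mathbb{R}^{n})}$ obtained by replacing $f^{*}$ with its maximal function $f^{**}(t):=\tfrac{1}{t}\int_{0}^{t}f^{*}$ in \eqref{lorstarnorm}: it satisfies ${\|f\|}_{L^{p,q}(\mathbb{R}^{n})}^{*}\le{\|f\|}_{L^{(p,q)}(\mathbb{R}^{n})}\lesssim_{p}{\|f\|}_{L^{p,q}(\mathbb{R}^{n})}^{*}$, is translation invariant, and makes Minkowski's and Young's convolution inequalities hold \emph{with constant $1$}, so that ${\|\nabla\mathcal{T}(t-s)\ast g\|}_{L^{(p,q)}(\mathbb{R}^{n})}\lesssim_{n}{(t-s)}^{-1/2}{\|g\|}_{L^{(p,q)}(\mathbb{R}^{n})}$ and ${\|(w\otimes v)(s)\|}_{L^{(p,q)}(\mathbb{R}^{n})}\le{\|w(s)\|}_{L^{\infty}(\mathbb{R}^{n})}{\|v(s)\|}_{L^{(p,q)}(\mathbb{R}^{n})}$. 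Combined with ${\|w(s)\|}_{L^{\infty}(\mathbb{R}^{n})}\le s^{-n/(2r)}{\|w\|}_{X}^{*}$ this yields
\[
\sup_{t\in(0,T)}{\|B[w,v](t)\|}_{L^{(p,q)}(\mathbb{R}^{n})}\;\lesssim_{n}\;{\|w\|}_{X}^{*}\left(\int_{0}^{1}{(1-s)}^{-\frac{1}{2}}s^{-\frac{n}{2r}}\,\mathrm{d}s\right)T^{\frac{1}{2}\left(1-\frac{n}{r}\right)}\sup_{t\in(0,T)}{\|v(t)\|}_{L^{(p,q)}(\mathbb{R}^{n})},
\]
the inner integral converging because $\tfrac{n}{2r}<\tfrac{1}{2}$. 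As $\sup_{t}{\|S[f](t)\|}_{L^{(p,q)}(\mathbb{R}^{n})}\le{\|f\|}_{L^{(p,q)}(\mathbb{R}^{n})}<\infty$ (contractivity of the heat semigroup for this norm) and every Stage-1 iterate obeys ${\|u_{k}\|}_{X}^{*}\le2A\lesssim_{n}r'{\|f\|}_{L^{r,\infty}(\mathbb{R}^{n})}^{*}$, applying the display to $u_{k+1}=S[f]-B[u_{k},u_{k}]$ gives $M_{k+1}\le{\|f\|}_{L^{(p,q)}(\mathbb{R}^{n})}+\theta M_{k}$ with $M_{k}:=\sup_{t\in(0,T)}{\|u_{k}(t)\|}_{L^{(p,q)}(\mathbb{R}^{n})}<\infty$ and $\theta\lesssim_{n}r'\big(\int_{0}^{1}{(1-s)}^{-1/2}s^{-n/(2r)}\,\mathrm{d}s\big)T^{\frac{1}{2}(1-\frac{n}{r})}{\|f\|}_{L^{r,\infty}(\mathbb{R}^{n})}^{*}$ \emph{independent of $p$ and $q$}. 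Since $s^{-n/(2r)}\le s^{-n/r}$ on $(0,1)$, the choice of $\eta_{n,r}$ forces $\theta<1$, whence $\sup_{k}M_{k}\le{\|f\|}_{L^{(p,q)}(\mathbb{R}^{n})}/(1-\theta)<\infty$. As ${\|u_{k}-u\|}_{X}^{*}\to0$ implies $u_{k}(t)\to u(t)$ uniformly on $\mathbb{R}^{n}$ for every $t$, Fatou's lemma (for $f^{**}$, hence for ${\|\cdot\|}_{L^{(p,q)}}$) gives ${\|u(t)\|}_{L^{(p,q)}(\mathbb{R}^{n})}\le\liminf_{k}M_{k}\le\sup_{k}M_{k}$ for every $t$; thus $\sup_{t\in(0,T)}{\|u(t)\|}_{L^{p,q}(\mathbb{R}^{n})}^{*}\le\sup_{k}M_{k}<\infty$, i.e.\ $u\in\mathcal{J}_{p,q}^{0}(T)$. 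Together with Stage 1 this gives $u\in\mathcal{J}_{p,q}^{0}(T)\cap\mathcal{J}_{\overline{\infty},\infty}^{-n/r}(T)$ satisfying $u=_{\mathrm{e.}}^{\mathrm{e.}}S[f]-B[u,u]$.

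\textbf{The main obstacle} is Stage 2. One cannot simply run a contraction for $u$ in an $L^{p,q}$-based path space: its smallness condition would involve ${\|f\|}_{L^{p,q}(\mathbb{R}^{n})}^{*}$, which the hypothesis does not control, and for $p$ far from $r$---in particular $p\le n$---the naive bilinear estimate with target $\mathcal{J}_{p,q}^{0}(T)$ is both circular and has a divergent time integral. The two-stage device resolves the first point by making the $L^{p,q}$-propagation \emph{linear} in $v$, driven by the \emph{small} coefficient ${\|v\|}_{X}^{*}\sim{\|f\|}_{L^{r,\infty}(\mathbb{R}^{n})}^{*}$; and passing from the Lorentz quasinorm ${\|\cdot\|}^{*}$ to the equivalent rearrangement norm ${\|\cdot\|}_{L^{(p,q)}}$ is what keeps that coefficient---hence the single threshold $\eta_{n,r}$---independent of $p$ and $q$ (for the bare quasinorm the Minkowski/Young constants blow up as $p\downarrow1$). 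The remaining points are routine bookkeeping: verifying that $S[f]$, the iterates, and $B[u,u]$ are everywhere defined so that the identity holds in the strong sense $=_{\mathrm{e.}}^{\mathrm{e.}}$ (this is where the $r=\infty$ clause of Lemma \ref{intro-lemma}(ii) is used), and tracking the two explicit constants---the factor $r'$ from the weak-type splitting in the linear estimate, and the Beta function $\int_{0}^{1}{(1-s)}^{-1/2}s^{-n/r}\,\mathrm{d}s$ from the time integral in Lemma \ref{intro-lemma}(ii)---to recover the stated form of $\eta_{n,r}$.
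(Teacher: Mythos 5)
Your proposal is correct and follows essentially the same route as the paper: a Picard/contraction argument driven solely by the weighted sup-norm smallness in $\mathcal{J}_{\overline{\infty},\infty}^{-n/r}(T)$ (with the same constants $r'\alpha_n$ from the heat estimate and $\beta_{n/r}=\int_0^1(1-s)^{-1/2}s^{-n/r}\,\mathrm{d}s$ from the time integral), combined with the mixed bilinear estimate ${\|B[w,v]\|}_{\mathcal{J}_{p,q}^{0}}\lesssim T^{\frac12(1-\frac nr)}{\|w\|}_{\mathcal{J}_{\overline{\infty},\infty}^{-n/r}}{\|v\|}_{\mathcal{J}_{p,q}^{0}}$ to propagate the $L^{p,q}$ bound. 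The only (harmless) deviations are cosmetic: you prove the $L^{r,\infty}\to L^{\overline{\infty}}$ heat bound by a layer-cake splitting instead of the $\|\Phi(t)\|^*_{L^{r',1}}$ duality estimate, and you obtain $u\in\mathcal{J}_{p,q}^{0}(T)$ from uniform boundedness of the iterates plus Fatou, whereas the paper contracts the differences in the $\mathcal{J}_{p,q}^{0}$ norm as well and passes to the limit in the intersection.
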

Leray's blowup estimate then has the following generalisation.
\begin{theorem}\label{intro-leray}
    Assume that $p\in(1,\infty]$, $q\in[1,\infty]$, $(p=\infty\Rightarrow q=\infty)$ and $r_{0}\in(n,\infty]$. Let $f\in L^{p,q}(\mathbb{R}^{n})\cap L^{r_{0},\infty}(\mathbb{R}^{n})$, and let $T\in(0,\infty]$ be the maximal time for which there exists $u\in\mathcal{J}_{p,q}^{0}(T_{-})\cap\mathcal{J}_{\overline{\infty},\infty}^{-n/r_{0}}(T_{-})$ satisfying $u =_{\mathrm{e.}}^{\mathrm{e.}} S[f] - B[u,u]$. If $T<\infty$, then the unique solution $u\in\mathcal{J}_{p,q}^{0}(T_{-})\cap\mathcal{J}_{\overline{\infty},\infty}^{-n/r_{0}}(T_{-})$ satisfies
    \begin{equation}\label{lrinfburate}
        {\|u(t_{0})\|}_{L^{r,\infty}(\mathbb{R}^{n})}^{*} \geq \frac{1}{4\eta_{n,r}{(T-t_{0})}^{\frac{1}{2}\left(1-\frac{n}{r}\right)}} \quad \text{for all }r\in(n,\infty]\text{ and }t_{0}\in(0,T),
    \end{equation}
    where we use the convention that ${\|u(t_{0})\|}_{L^{r,\infty}(\mathbb{R}^{n})}^{*}=\infty$ if $u(t_{0})\notin L^{r,\infty}(\mathbb{R}^{n})$.  Note that, in view of (\ref{basic-relation-2}), (\ref{lrinfburate}) remains true with $L^{r,\infty}$ replaced  by $L^{r,q}$ for any $q\in [1,\infty]$.
\end{theorem}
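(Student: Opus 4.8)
We argue by contradiction. Suppose \eqref{lrinfburate} fails for some $r\in(n,\infty]$ and $t_{0}\in(0,T)$; by the stated convention this forces $u(t_{0})\in L^{r,\infty}(\mathbb{R}^{n})$ together with the strict inequality $4\eta_{n,r}{(T-t_{0})}^{\frac{1}{2}(1-\frac{n}{r})}{\|u(t_{0})\|}_{L^{r,\infty}(\mathbb{R}^{n})}^{*}<1$. Since $u\in\mathcal{J}_{p,q}^{0}(T_{-})\cap\mathcal{J}_{\overline{\infty},\infty}^{-n/r_{0}}(T_{-})$ and $0<t_{0}<T$, we also have $u(t_{0})\in L^{p,q}(\mathbb{R}^{n})\cap L^{\overline{\infty}}(\mathbb{R}^{n})$ (the uniqueness asserted in the theorem, valid on each $(0,T')$ with $T'<T$ and hence on $(0,T_{-})$, is immediate from Corollary~\ref{intro-uniqueness-corollary}(ii)). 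The plan is to restart the equation at time $t_{0}$ from the data $u(t_{0})$ and glue it to $u$, producing a solution $\widetilde{u}=_{\mathrm{e.}}^{\mathrm{e.}}S[f]-B[\widetilde{u},\widetilde{u}]$ on $(0,t_{0}+\tau)$ with $t_{0}+\tau>T$, lying in $\mathcal{J}_{p,q}^{0}((t_{0}+\tau)_{-})\cap\mathcal{J}_{\overline{\infty},\infty}^{-n/r_{0}}((t_{0}+\tau)_{-})$ and agreeing with $u$ on $(0,T)\times\mathbb{R}^{n}$, which contradicts the maximality of $T$.

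Since $1-\tfrac{n}{r}>0$, we may fix $\tau\in(T-t_{0},\infty)$ with $4\eta_{n,r}\tau^{\frac{1}{2}(1-\frac{n}{r})}{\|u(t_{0})\|}_{L^{r,\infty}}^{*}<1$, and Theorem~\ref{intro-existence} (with parameters $p,q,r$) then yields $v\in\mathcal{J}_{p,q}^{0}(\tau)\cap\mathcal{J}_{\overline{\infty},\infty}^{-n/r}(\tau)$ with $v=_{\mathrm{e.}}^{\mathrm{e.}}S[u(t_{0})]-B[v,v]$ on $(0,\tau)\times\mathbb{R}^{n}$. We will need the sharper fact that $v\in\mathcal{J}_{\overline{\infty},\infty}^{0}(\tau)$ (automatic when $r=\infty$). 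To obtain it when $r<\infty$, apply Theorem~\ref{intro-existence} again with parameters $p,q,\infty$ on a short interval $(0,s_{0})$ with $s_{0}<T-t_{0}$ and $4\eta_{n,\infty}s_{0}^{1/2}{\|u(t_{0})\|}_{L^{\overline{\infty}}}^{*}<1$ (legitimate since $u(t_{0})\in L^{p,q}\cap L^{\overline{\infty}}$), producing a solution in $\mathcal{J}_{\overline{\infty},\infty}^{0}(s_{0})$; Corollary~\ref{intro-uniqueness-corollary}(ii) (both solutions lie in $\mathcal{J}_{\overline{\infty},\infty}^{-n/r}((s_{0})_{-})$) identifies it with $v$ on $(0,s_{0})$, so ${\|v(s)\|}_{L^{\overline{\infty}}}^{*}$ is bounded for $s\in(0,s_{0}]$, while the $\mathcal{J}_{\overline{\infty},\infty}^{-n/r}(\tau)$ bound controls $s\in[s_{0},\tau)$, giving $v\in\mathcal{J}_{\overline{\infty},\infty}^{0}(\tau)$.

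Now set $\widetilde{u}:=u$ on $(0,t_{0}]$ and $\widetilde{u}(t):=v(t-t_{0})$ on $(t_{0},t_{0}+\tau)$. By the semigroup/shift property (Lemma~\ref{semigrouplem}), $w:=u(t_{0}+\cdot)$ solves the integral equation on $(0,T-t_{0})$ with data $u(t_{0})$; as both $w$ and $v$ lie in $\mathcal{J}_{\overline{\infty},\infty}^{-n/r_{0}}((T-t_{0})_{-})$ (for $v$ because $v\in\mathcal{J}_{\overline{\infty},\infty}^{0}(\tau)$ and $\tau>T-t_{0}$; for $w$ because ${\|u(t_{0}+s)\|}_{L^{\overline{\infty}}}^{*}\lesssim(t_{0}+s)^{-n/(2r_{0})}$ and $s/(t_{0}+s)<1$), Corollary~\ref{intro-uniqueness-corollary}(ii) gives $w\equiv v$, i.e.\ $u(t)=v(t-t_{0})$ on $(t_{0},T)$, so $\widetilde{u}$ is well defined and coincides with $u$ on $(0,T)\times\mathbb{R}^{n}$. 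Applying the shift identity in the other direction (again Lemma~\ref{semigrouplem}: substitute $u(t_{0})=S[f](t_{0})-B[u,u](t_{0})$ into $S[u(t_{0})](t-t_{0})$, use $\Phi(t-t_{0})\ast\Phi(t_{0})=\Phi(t)$ and $\Phi(t-t_{0})\ast\nabla\mathcal{T}(t_{0}-s)=\nabla\mathcal{T}(t-s)$, and change variables $s\mapsto s-t_{0}$ in the Duhamel term of $B[v,v](t-t_{0})$) shows $\widetilde{u}=_{\mathrm{e.}}^{\mathrm{e.}}S[f]-B[\widetilde{u},\widetilde{u}]$ on $(0,t_{0}+\tau)\times\mathbb{R}^{n}$. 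Finally $\widetilde{u}\in\mathcal{J}_{p,q}^{0}((t_{0}+\tau)_{-})$ because ${\|v(s)\|}_{L^{p,q}}^{*}$ is bounded on $(0,\tau)$, and $\widetilde{u}\in\mathcal{J}_{\overline{\infty},\infty}^{-n/r_{0}}((t_{0}+\tau)_{-})$ because on $(t_{0},t_{0}+\tau)$ one has $t^{n/(2r_{0})}{\|v(t-t_{0})\|}_{L^{\overline{\infty}}}^{*}\le(t_{0}+\tau)^{n/(2r_{0})}\sup_{(0,\tau)}{\|v(s)\|}_{L^{\overline{\infty}}}^{*}<\infty$ while on $(0,t_{0}]$ it coincides with $u$. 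Since $t_{0}+\tau>T$, this contradicts the maximality of $T$, so \eqref{lrinfburate} holds for all $r\in(n,\infty]$ and $t_{0}\in(0,T)$; the final assertion then follows from \eqref{basic-relation-2}.

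I expect the main obstacle to be the regularity bookkeeping at the junction time $t_{0}$: a priori the restarted solution $v$ lies only in $\mathcal{J}_{\overline{\infty},\infty}^{-n/r}$, whose temporal weight degenerates as its time variable tends to $0$, so a naive gluing would leave $\widetilde{u}$ outside $\mathcal{J}_{\overline{\infty},\infty}^{-n/r_{0}}$ near $t_{0}$. Exploiting that $u(t_{0})\in L^{\overline{\infty}}(\mathbb{R}^{n})$ and thereby upgrading $v$ to $\mathcal{J}_{\overline{\infty},\infty}^{0}(\tau)$ (via the second, short-time application of Theorem~\ref{intro-existence} and the uniqueness of Corollary~\ref{intro-uniqueness-corollary}) is precisely what places the extension in the correct path space and forces the contradiction; the rest is the routine shift/semigroup manipulation of Lemma~\ref{semigrouplem} and the elementary inclusions \eqref{basic-relation-2}--\eqref{basic-inclusion}.
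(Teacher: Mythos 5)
Your argument is correct and follows essentially the same route as the paper: restart from $u(t_{0})$ via Theorem \ref{intro-existence} on an interval of length exceeding $T-t_{0}$, identify the restarted solution with $\tau_{t_{0}}u$ using \eqref{heat-semigroup-property}, \eqref{existence-semigroup} and Corollary \ref{intro-uniqueness-corollary}, glue, and contradict the maximality of $T$. The only deviation is your extra short-time application of Theorem \ref{intro-existence} with $r=\infty$ to upgrade $v$ to $\mathcal{J}^{0}_{\overline{\infty},\infty}(\tau)$; this is harmless but not actually needed, since on $(t_{0},T)$ the glued function coincides with $u$ (by the very uniqueness step you perform), so the degenerate weight of $v$ near its initial time never enters the bounds required for membership in $\mathcal{J}_{p,q}^{0}\cap\mathcal{J}_{\overline{\infty},\infty}^{-n/r_{0}}$ up to times beyond $T$ — which is how the paper's proof handles the bookkeeping.
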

\noindent

Inequality \eqref{basic-relation-4} allows us to establish further estimates on solutions. In particular, we obtain the following properties of solutions.

\pagebreak

\begin{corollary}\label{intro-characterisation}
    Given $f\in\cup_{p\in(n,\infty]}L^{p,\infty}(\mathbb{R}^{n})$, let $T_{*}=T_{*}[f]\in(0,\infty]$ be the maximal time for which there exists a solution $u_{*}\in\cup_{\sigma\in(-1,\infty)}\mathcal{J}_{\overline{\infty},\infty}^{\sigma}({T_{*}}_{-})$ to the equation $u_{*}=_{\mathrm{e.}}^{\mathrm{e.}}S[f]-B[u_{*},u_{*}]$, and write $u_{*}=u_{*}[f]$ to denote the unique solution $u_{*}\in\cup_{\sigma\in(-1,\infty)}\mathcal{J}_{\overline{\infty},\infty}^{\sigma}({T_{*}}_{-})$.
    \begin{enumerate}[label=(\roman*)]
        \item If $T_{*}<\infty$, then
        \begin{equation}\label{intro-blowup}
            {\|u_{*}(t_{0})\|}_{L^{r,\infty}(\mathbb{R}^{n})}^{*} \geq \frac{1}{4\eta_{n,r}{(T_{*}-t_{0})}^{\frac{1}{2}\left(1-\frac{n}{r}\right)}} \quad \text{for all }r\in(n,\infty]\text{ and }t_{0}\in(0,T_{*}).
        \end{equation}
        \item If $r_{0}\in(n,\infty]$ and $f\in L^{r_{0},\infty}(\mathbb{R}^{n})$, then
        \begin{equation}\label{intro-estimates}
            u_{*} \in \mathcal{J}_{r_{0},\infty}^{0}({T_{*}}_{-}) \cap \mathcal{J}_{\overline{\infty},\infty}^{-n/r_{0}}({T_{*}}_{-}) \cap \bigcap_{r\in(r_{0},\infty)}\mathcal{J}_{r,1}^{\frac{n}{r}-\frac{n}{r_{0}}}({T_{*}}_{-}).
        \end{equation}
        Moreover, if $r\in[r_{0},\overline{\infty}]$, $\alpha\in(2,\infty)$, $\beta\in[1,\infty)$, $\frac{n}{r_{0}}<\frac{2}{\alpha}+\frac{n}{r}\leq1$, $T\in(0,\infty]$ and $v\in\mathcal{L}_{r,\infty}^{\alpha,\beta}(T_{-})$ satisfies $v=_{r}^{\mathrm{a.e.}}S[f]-B[v,v]$, then $T\leq T_{*}$ and $u_{*}=_{r}^{\mathrm{a.e.}}v$ on $(0,T)$.
        \item If $r_{0}\in(n,\infty]$, $p\in(1,r_{0}]$, $q\in[1,\infty]$, $(p=\infty\Rightarrow q=\infty)$ and $f\in L^{p,q}(\mathbb{R}^{n})\cap L^{r_{0},\infty}(\mathbb{R}^{n})$, then
        \begin{equation}\label{intro-further-estimates}
            u_{*} \in \mathcal{J}_{p,q}^{0}({T_{*}}_{-}) \cap \bigcap_{r\in(p,r_{0})}\mathcal{J}_{r,1}^{0}({T_{*}}_{-}).
        \end{equation}
    \end{enumerate}
\end{corollary}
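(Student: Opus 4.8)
The plan is to isolate a single \emph{persistence} statement and deduce everything from it: if $f\in L^{s,\infty}(\mathbb{R}^{n})$ for some $s\in(n,\infty]$, then $u_{*}\in\mathcal{J}_{s,\infty}^{0}({T_{*}}_{-})\cap\mathcal{J}_{\overline{\infty},\infty}^{-n/s}({T_{*}}_{-})$ and $T_{*}$ coincides with the maximal existence time for a solution in the class $\mathcal{J}_{s,\infty}^{0}(T_{-})\cap\mathcal{J}_{\overline{\infty},\infty}^{-n/s}(T_{-})$ of Theorem~\ref{intro-leray}. Before that I would check that $u_{*}$ and $T_{*}$ are well defined: on a bounded time interval one has $\mathcal{J}_{\overline{\infty},\infty}^{\sigma_{2}}(T')\subseteq\mathcal{J}_{\overline{\infty},\infty}^{\sigma_{1}}(T')$ whenever $-1<\sigma_{1}\le\sigma_{2}$ (since $t^{(\sigma_{2}-\sigma_{1})/2}$ is bounded there), so any two solutions in $\cup_{\sigma>-1}\mathcal{J}_{\overline{\infty},\infty}^{\sigma}$ on overlapping intervals can be compared in a common $\mathcal{J}_{\overline{\infty},\infty}^{\sigma}$ with $\sigma>-1$ and hence agree by Corollary~\ref{intro-uniqueness-corollary}(ii) (taking $r=\overline{\infty}$, so $-\sigma+n/r<1$); Theorem~\ref{intro-existence} makes the class non-empty. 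Granting the persistence statement, part (i) is then immediate by applying Theorem~\ref{intro-leray} with $(p,q,r_{0})=(s,\infty,s)$ for $s$ with $f\in L^{s,\infty}(\mathbb{R}^{n})$, which turns \eqref{lrinfburate} into \eqref{intro-blowup}.

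To prove persistence I would argue as follows. Near $t=0$, $u_{*}$ coincides by the uniqueness above with the solution of Theorem~\ref{intro-existence} (used with $q=\infty$, $r=s$), which lies in $\mathcal{J}_{s,\infty}^{0}\cap\mathcal{J}_{\overline{\infty},\infty}^{-n/s}$ on a short interval. Since any solution in $\cup_{\sigma>-1}\mathcal{J}_{\overline{\infty},\infty}^{\sigma}({T_{*}}_{-})$ lies in $L^{\infty}(\mathbb{R}^{n})$ for each $t>0$ and is bounded on compact subintervals of $(0,T_{*})$, combining this with the behaviour near $0$ gives $u_{*}\in\mathcal{J}_{\overline{\infty},\infty}^{-n/s}({T_{*}}_{-})$, i.e. ${\|u_{*}(t)\|}_{L^{\infty}(\mathbb{R}^{n})}\lesssim_{T'}t^{-n/(2s)}$ on $(0,T')$ for every $T'<T_{*}$. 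For the subcritical norm I would substitute $u_{*}=S[f]-B[u_{*},u_{*}]$ into ${\|\cdot\|}_{L^{s,\infty}(\mathbb{R}^{n})}^{*}$: the linear term is $\lesssim{\|f\|}_{L^{s,\infty}(\mathbb{R}^{n})}^{*}$, and, splitting the time-integral defining $B[u_{*},u_{*}](t)$ at a fixed small $\varepsilon_{0}>0$, the piece over $(0,\varepsilon_{0})$ is controlled using ${\|u_{*}\|}_{\mathcal{J}_{s,\infty}^{0}(\varepsilon_{0})}^{*}$ and ${\|\nabla\mathcal{T}(\tau,\cdot)\|}_{L^{1}(\mathbb{R}^{n})}\sim\tau^{-1/2}$, while the piece over $(\varepsilon_{0},t)$ is $\lesssim\int_{\varepsilon_{0}}^{t}(t-\sigma)^{-1/2}{\|u_{*}(\sigma)\|}_{L^{\infty}(\mathbb{R}^{n})}{\|u_{*}(\sigma)\|}_{L^{s,\infty}(\mathbb{R}^{n})}^{*}\,\mathrm{d}\sigma$ (using that multiplication by $L^{\infty}$ functions is bounded on Lorentz spaces). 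Because ${\|u_{*}(\sigma)\|}_{L^{\infty}(\mathbb{R}^{n})}$ is bounded on $[\varepsilon_{0},T']$, a weakly singular Gr\"onwall inequality then bounds $t\mapsto{\|u_{*}(t)\|}_{L^{s,\infty}(\mathbb{R}^{n})}^{*}$ on every $(0,T')$ with $T'<T_{*}$, so $u_{*}\in\mathcal{J}_{s,\infty}^{0}({T_{*}}_{-})$; a standard continuation argument (restarting the equation near the right endpoint via the semigroup property, cf. Lemma~\ref{semigrouplem}, and invoking uniqueness) identifies the maximal existence time in the intersection class with $T_{*}$.

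With persistence in hand, parts (ii) and (iii) follow by interpolating against the $L^{\infty}$ bound using \eqref{basic-relation-4}. Taking $s=r_{0}$ gives the first two memberships in \eqref{intro-estimates}; for $r\in(r_{0},\infty)$, \eqref{basic-relation-4} with $(p_{0},p_{1},\theta)=(r_{0},\infty,1-\tfrac{r_{0}}{r})$ gives, pointwise in $t$, ${\|u_{*}(t)\|}_{L^{r,1}(\mathbb{R}^{n})}^{*}\lesssim_{n,r,r_{0}}\big({\|u_{*}(t)\|}_{L^{r_{0},\infty}(\mathbb{R}^{n})}^{*}\big)^{r_{0}/r}\big({\|u_{*}(t)\|}_{L^{\infty}(\mathbb{R}^{n})}\big)^{1-r_{0}/r}\lesssim_{T'}t^{\tfrac{1}{2}(\tfrac{n}{r}-\tfrac{n}{r_{0}})}$ on $(0,T')$, whence $u_{*}\in\mathcal{J}_{r,1}^{\tfrac{n}{r}-\tfrac{n}{r_{0}}}({T_{*}}_{-})$. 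In particular, by \eqref{basic-relation-2}, $u_{*}\in\mathcal{K}_{r,\infty}^{\sigma_{r}}({T_{*}}_{-})$ with $\sigma_{r}=\tfrac{n}{r}-\tfrac{n}{r_{0}}$ for every $r\in[r_{0},\overline{\infty}]$, and $-\sigma_{r}+n/r=n/r_{0}\in(0,1)$; so under the hypothesis $\tfrac{n}{r_{0}}<\tfrac{2}{\alpha}+\tfrac{n}{r}\le1$ Corollary~\ref{intro-uniqueness-corollary}(i) gives $u_{*}=_{r}^{\mathrm{a.e.}}v$ on $(0,\min(T,T_{*}))$, and $T\le T_{*}$ then follows from the maximality of $T_{*}$ together with the local theory (which places a solution $v\in\mathcal{L}_{r,\infty}^{\alpha,\beta}$ with data in $L^{r_{0},\infty}$ inside $\cup_{\sigma>-1}\mathcal{J}_{\overline{\infty},\infty}^{\sigma}$ on its interval of existence, cf. Theorem~\ref{intro-regularity}). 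For (iii), I would repeat the persistence argument with the $L^{p,q}$-norm in place of ${\|\cdot\|}_{L^{s,\infty}}^{*}$ (still using ${\|u_{*}(\sigma)\|}_{L^{\infty}(\mathbb{R}^{n})}\lesssim\sigma^{-n/(2r_{0})}$ in the Gr\"onwall step) to get $u_{*}\in\mathcal{J}_{p,q}^{0}({T_{*}}_{-})$; then ${\|u_{*}(t)\|}_{L^{p,\infty}(\mathbb{R}^{n})}^{*}\le{\|u_{*}(t)\|}_{L^{p,q}(\mathbb{R}^{n})}^{*}$ is bounded on $(0,T')$, and for $r\in(p,r_{0})$ inequality \eqref{basic-relation-4} with $(p_{0},p_{1})=(p,r_{0})$ bounds ${\|u_{*}(t)\|}_{L^{r,1}(\mathbb{R}^{n})}^{*}$ by a power product of these two bounded quantities, giving $u_{*}\in\mathcal{J}_{r,1}^{0}({T_{*}}_{-})$.

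The interpolation steps are routine once \eqref{basic-relation-4} is available, so the main obstacle is the persistence statement — and within it, ruling out that the subcritical norm ${\|u_{*}(t)\|}_{L^{s,\infty}(\mathbb{R}^{n})}^{*}$ (or ${\|u_{*}(t)\|}_{L^{p,q}(\mathbb{R}^{n})}^{*}$) inflates at some interior time $T^{\sharp}<T_{*}$ even though $u_{*}$ stays bounded there. The weakly singular Gr\"onwall inequality is what excludes this, and the point is that it can be run only \emph{after} the $L^{\infty}$ bound has been propagated all the way up to $T_{*}$ (via $\mathcal{J}_{\overline{\infty},\infty}^{-n/r_{0}}({T_{*}}_{-})$): this makes the bilinear term enter only linearly, rather than quadratically, in the norm being estimated, so no finite-time blow-up of that norm can be manufactured before $T_{*}$. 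A secondary point needing care is the coincidence of the various maximal existence times and the inequality $T\le T_{*}$ in (ii), which lean on the uniqueness results and the regularity theory established earlier in the paper.
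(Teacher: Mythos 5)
Your overall strategy is viable but genuinely different from the paper's, and it is considerably heavier. The paper proves persistence without any a priori (Gr\"onwall) estimate: it lets $T$ be the maximal existence time in the class $\mathcal{J}^{0}_{p,q}(T_{-})\cap\mathcal{J}^{-n/r_{0}}_{\overline{\infty},\infty}(T_{-})$ of Theorem \ref{intro-leray}, notes $T\leq T_{*}$ and $u_{*}=v$ on $(0,T)$ by uniqueness, and then observes that if $T<T_{*}$ the case $r=\infty$ of \eqref{lrinfburate} would force ${\|u_{*}(t_{0})\|}_{L^{\infty}(\mathbb{R}^{n})}\rightarrow\infty$ as $t_{0}\rightarrow T^{-}$, contradicting $u_{*}\in\mathcal{J}^{\sigma}_{\overline{\infty},\infty}({T_{*}}_{-})$ with $T<T_{*}$; hence $T=T_{*}$, which gives (i) and the memberships $u_{*}\in\mathcal{J}^{0}_{p,q}({T_{*}}_{-})\cap\mathcal{J}^{-n/r_{0}}_{\overline{\infty},\infty}({T_{*}}_{-})$, after which \eqref{basic-relation-4} yields the remaining memberships exactly as in your interpolation step. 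Your Gr\"onwall-plus-continuation argument re-derives by hand what Theorem \ref{intro-leray} already encapsulates (its proof contains the restart machinery). Note also that, as you state it, the weakly singular Gr\"onwall inequality cannot by itself ``rule out inflation at an interior time'': the integral inequality is vacuous unless ${\|u_{*}(t)\|}^{*}_{L^{s,\infty}(\mathbb{R}^{n})}$ is already known to be finite (locally bounded) on the interval in question, which is essentially what is being proved. The correct organisation is to run Gr\"onwall on the maximal interval of membership in the intersection class (where finiteness holds by definition), obtain a bound uniform up to that time depending only on data on $(0,T')$ with $T'<T_{*}$, and only then invoke the restart/uniqueness continuation you mention. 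That is repairable, but the order of operations matters.

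The genuine gap is your justification of $T\leq T_{*}$ in the ``moreover'' clause of (ii). You appeal to ``maximality of $T_{*}$ together with the local theory (cf.\ Theorem \ref{intro-regularity})'', i.e.\ to the claim that any solution $v\in\mathcal{L}^{\alpha,\beta}_{r,\infty}(T_{-})$ with data in $L^{r_{0},\infty}(\mathbb{R}^{n})$ lies in $\cup_{\sigma\in(-1,\infty)}\mathcal{J}^{\sigma}_{\overline{\infty},\infty}(T_{-})$. Theorem \ref{intro-regularity} says nothing of the sort: it concerns $u_{*}$ only, while $v$ is an a.e.-in-time object solving the equation only in the sense $=_{r}^{\mathrm{a.e.}}$, so producing from it an everywhere-defined solution in a $\mathcal{J}^{\sigma}_{\overline{\infty},\infty}$ class on all of $(0,T)$ would require a bootstrap/patching argument that is proved nowhere in the paper (and uniqueness only identifies $v$ with $u_{*}$ on $(0,\min(T,T_{*}))$, which is precisely what needs extending). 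The paper avoids this entirely: if $T>T_{*}$ then $T_{*}<\infty$, and since $u_{*}=_{r}^{\mathrm{a.e.}}v$ on $(0,T_{*})$, the lower bound \eqref{intro-blowup} gives ${\|v(t)\|}^{*}_{L^{r,\infty}(\mathbb{R}^{n})}\gtrsim{(T_{*}-t)}^{-\frac{1}{2}\left(1-\frac{n}{r}\right)}$ for a.e.\ $t\in(0,T_{*})$; because $\frac{2}{\alpha}\leq1-\frac{n}{r}$ and $\beta<\infty$, such a function fails to belong to $L^{\alpha,\beta}$ near $T_{*}$, contradicting $v\in\mathcal{L}^{\alpha,\beta}_{r,\infty}(T'_{-})$ for $T_{*}<T'<T$. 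You should replace the appeal to Theorem \ref{intro-regularity} by this norm-incompatibility argument (or supply the missing bootstrap for general $\mathcal{L}^{\alpha,\beta}$ solutions).
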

\begin{proof}
    We start by proving \eqref{intro-blowup}, \eqref{intro-estimates} and \eqref{intro-further-estimates}. Suppose that $r_{0}\in(n,\infty]$, $p\in(1,r_{0}]$, $q\in[1,\infty]$, $(p=\infty\Rightarrow q=\infty)$ and $f\in L^{p,q}(\mathbb{R}^{n})\cap L^{r_{0},\infty}(\mathbb{R}^{n})$, and let $T\in(0,\infty]$ be the maximal time for which there exists $v\in\mathcal{J}_{p,q}^{0}(T_{-})\cap\mathcal{J}_{\overline{\infty},\infty}^{-n/r_{0}}(T_{-})$ satisfying $v=_{\mathrm{e.}}^{\mathrm{e.}}S[f]-B[v,v]$. Then $T\leq T_{*}$, and by Corollary \ref{intro-uniqueness-corollary} we have $u_{*}=_{\mathrm{e.}}^{\mathrm{e.}}v$ on $(0,T)$. If $T<\infty$, then by Theorem \ref{intro-leray} we have $4\eta_{n,r}{(T-t_{0})}^{\frac{1}{2}\left(1-\frac{n}{r}\right)}{\|u_{*}(t_{0})\|}_{L^{r,\infty}(\mathbb{R}^{n})}^{*}\geq1$ for all $r\in(n,\infty]$ and $t_{0}\in(0,T)$. In the case $r=\infty$, this rules out the possibility that $T<T_{*}$. Therefore $T=T_{*}$, and \eqref{intro-blowup} holds if $T_{*}<\infty$. We deduce that $u_{*}\in\mathcal{J}_{p,q}^{0}({T_{*}}_{-})\cap\mathcal{J}_{\overline{\infty},\infty}^{-n/r_{0}}({T_{*}}_{-})$, so in the special case $p=r_{0}$ and $q=\infty$ we obtain $u_{*}\in\mathcal{J}_{r_{0},\infty}^{0}({T_{*}}_{-})$. Inequality \eqref{basic-relation-4} then implies \eqref{intro-estimates} and \eqref{intro-further-estimates}.

    We now prove the ``moreover'' statement from part (ii). Suppose that $r_{0}\in(n,\infty]$, $f\in L^{r_{0},\infty}(\mathbb{R}^{n})$, $r\in[r_{0},\overline{\infty}]$, $\alpha\in(2,\infty)$, $\beta\in[1,\infty)$, $\frac{n}{r_{0}}<\frac{2}{\alpha}+\frac{n}{r}\leq1$, $T\in(0,\infty]$, $v\in\mathcal{L}_{r,\infty}^{\alpha,\beta}(T_{-})$, and $v=_{r}^{\mathrm{a.e.}}S[f]-B[v,v]$. By \eqref{intro-estimates} we have $u_{*}\in\mathcal{J}_{r,\infty}^{\frac{n}{r}-\frac{n}{r_{0}}}({T_{*}}_{-})$, so by Corollary \ref{intro-uniqueness-corollary} we have $u_{*}=_{r}^{\mathrm{a.e.}}v$ on the common interval of existence. The blowup estimate \eqref{intro-blowup} rules out the possibility that $T>T_{*}$.
\end{proof}
The arguments of \cite{ozanski2017} generalise to provide the following regularity properties of solutions.
\begin{theorem}\label{intro-regularity}
    Assume that $p\in(1,\infty]$, $q\in[1,\infty]$ and $(p=\infty\Rightarrow q=\infty)$. Let $f\in L^{p,q}(\mathbb{R}^{n})\cap\left(\cup_{r\in(n,\infty]}L^{r,\infty}(\mathbb{R}^{n})\right)$, and let $T_{*}=T_{*}[f]$ and $u_{*}=u_{*}[f]$ be as described in Corollary \ref{intro-characterisation}. Then $u_{*}$ defines a continuous function $u_{*}:(0,T_{*})\rightarrow L^{p,q}(\mathbb{R}^{n})\cap L^{\overline{\infty}}(\mathbb{R}^{n})$, and for each $\alpha\in(0,1)$ the H\"{o}lder seminorm ${[u_{*}(t)]}_{C^{\alpha}(\mathbb{R}^{n})}$ is locally bounded in $t\in(0,T_{*})$. If $q<\infty$, then ${\|u_{*}(t)-f\|}_{L^{p,q}(\mathbb{R}^{n})}^{*}\rightarrow0$ as $t\rightarrow0$.
\end{theorem}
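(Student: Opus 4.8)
\emph{Proof plan.} The plan is to exploit the integral equation $u_{*}=_{\mathrm{e.}}^{\mathrm{e.}}S[f]-B[u_{*},u_{*}]$ together with its ``restarted'' version at a positive time. Fix $r_{0}\in(n,\infty]$ with $f\in L^{r_{0},\infty}(\mathbb{R}^{n})$; enlarging $r_{0}$ if necessary we may assume $p\leq r_{0}$, so that Corollary \ref{intro-characterisation}(ii)--(iii) gives $u_{*}\in\mathcal{J}_{p,q}^{0}({T_{*}}_{-})\cap\mathcal{J}_{\overline{\infty},\infty}^{-n/r_{0}}({T_{*}}_{-})$. Hence on each interval $(0,T')$ with $T'<T_{*}$ there are constants $M,N$ (depending on $T'$) with ${\|u_{*}(s)\|}_{L^{p,q}(\mathbb{R}^{n})}^{*}\leq M$ and ${\|u_{*}(s)\|}_{L^{\overline{\infty}}(\mathbb{R}^{n})}\leq Ns^{-n/(2r_{0})}$, and therefore, using ${\|gh\|}_{L^{p,q}(\mathbb{R}^{n})}^{*}\leq{\|g\|}_{L^{\overline{\infty}}(\mathbb{R}^{n})}{\|h\|}_{L^{p,q}(\mathbb{R}^{n})}^{*}$, also ${\|(u_{*}\otimes u_{*})(s)\|}_{L^{p,q}(\mathbb{R}^{n})}^{*}\leq MNs^{-n/(2r_{0})}$ and ${\|(u_{*}\otimes u_{*})(s)\|}_{L^{\overline{\infty}}(\mathbb{R}^{n})}\leq N^{2}s^{-n/r_{0}}$. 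I will also use the classical Oseen-kernel bounds ${\|\nabla^{j}\Phi(\tau)\|}_{L^{1}(\mathbb{R}^{n})}\lesssim_{n}\tau^{-j/2}$, ${\|\nabla\mathcal{T}(\tau)\|}_{L^{1}(\mathbb{R}^{n})}\lesssim_{n}\tau^{-1/2}$ and ${\|\nabla^{2}\mathcal{T}(\tau)\|}_{L^{1}(\mathbb{R}^{n})}\lesssim_{n}\tau^{-1}$ (the same kernel estimates underlying Lemma \ref{intro-lemma}), together with the standard fact that the heat semigroup is strongly continuous on $L^{p,q}(\mathbb{R}^{n})$ when $q<\infty$, and that $L^{p,q}(\mathbb{R}^{n})$ is normable since $p>1$. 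For the behaviour as $t\to0$ (so $q<\infty$, hence $p\in(1,\infty)$), we write $u_{*}(t)-f=(\Phi(t)*f-f)-B[u_{*},u_{*}](t)$ in $L^{p,q}(\mathbb{R}^{n})$: the first term tends to $0$ by strong continuity, while
\begin{equation*}
    {\|B[u_{*},u_{*}](t)\|}_{L^{p,q}(\mathbb{R}^{n})}^{*}\leq\int_{0}^{t}{\|\nabla\mathcal{T}(t-s)\|}_{L^{1}(\mathbb{R}^{n})}{\|(u_{*}\otimes u_{*})(s)\|}_{L^{p,q}(\mathbb{R}^{n})}^{*}\,\mathrm{d}s\lesssim_{n}MN\int_{0}^{t}(t-s)^{-1/2}s^{-n/(2r_{0})}\,\mathrm{d}s,
\end{equation*}
which equals $t^{\frac{1}{2}(1-n/r_{0})}$ times a convergent Beta integral and so tends to $0$.

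For continuity at a fixed $t_{1}\in(0,T_{*})$ I would restart at $t_{0}:=t_{1}/2$. With $g:=u_{*}(t_{0})\in L^{p,q}(\mathbb{R}^{n})\cap L^{\overline{\infty}}(\mathbb{R}^{n})$, the semigroup property (obtained by substituting the integral equation into itself, using $\Phi(a)*\Phi(b)=\Phi(a+b)$, $\Phi(a)*\nabla\mathcal{T}(b)=\nabla\mathcal{T}(a+b)$ and Fubini, all justified by the bounds above) yields
\begin{equation*}
    u_{*}(t)=\Phi(t-t_{0})*g-\int_{t_{0}}^{t}\nabla\mathcal{T}(t-s)*(u_{*}\otimes u_{*})(s)\,\mathrm{d}s\qquad(t_{0}<t<T_{*}).
\end{equation*}
Subtracting this identity at $t$ and at $t_{1}$: the heat part can be written as $\Phi(\varepsilon)*h_{\varepsilon}-h_{\varepsilon}$ with $\varepsilon=|t-t_{1}|\to0$ and $h_{\varepsilon}$ of the form $\Phi(\tau)*g$ for some $\tau$ bounded below by $t_{1}/4$, so $\nabla h_{\varepsilon}\in L^{p,q}(\mathbb{R}^{n})\cap L^{\overline{\infty}}(\mathbb{R}^{n})$ with norm bounded uniformly for $t$ near $t_{1}$; since ${\|k(\cdot-y)-k\|}_{X}\leq|y|{\|\nabla k\|}_{X}$ for $X\in\{L^{p,q}(\mathbb{R}^{n}),L^{\overline{\infty}}(\mathbb{R}^{n})\}$ and $\int|\Phi(\varepsilon,y)|\,|y|\,\mathrm{d}y\lesssim\varepsilon^{1/2}$, this part tends to $0$ in $X$. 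The bilinear difference splits into an integral over the thin time-slab between $t$ and $t_{1}$, of size $\lesssim|t-t_{1}|^{1/2}\sup_{s\in(t_{0},t_{1})}{\|(u_{*}\otimes u_{*})(s)\|}_{X}$, plus $\int_{t_{0}}^{\min(t,t_{1})}[\nabla\mathcal{T}(t-s)-\nabla\mathcal{T}(t_{1}-s)]*(u_{*}\otimes u_{*})(s)\,\mathrm{d}s$, which tends to $0$ in $X$ by dominated convergence using the $L^{1}$-continuity of $\tau\mapsto\nabla\mathcal{T}(\tau)$ away from $\tau=0$ (the contribution of $s$ close to $t_{1}$ being controlled by the local integrability of $(t_{1}-s)^{-1/2}$). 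This gives the asserted continuity $u_{*}:(0,T_{*})\to L^{p,q}(\mathbb{R}^{n})\cap L^{\overline{\infty}}(\mathbb{R}^{n})$.

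For the Hölder bound, fix a compact $[a,b]\subset(0,T_{*})$ and restart at $t_{0}:=a/2$ with $g:=u_{*}(t_{0})\in L^{\overline{\infty}}(\mathbb{R}^{n})$. For $t\in[a,b]$ the restarted identity gives ${[u_{*}(t)]}_{C^{\alpha}(\mathbb{R}^{n})}\leq{[\Phi(t-t_{0})*g]}_{C^{\alpha}(\mathbb{R}^{n})}+\int_{t_{0}}^{t}{[\nabla\mathcal{T}(t-s)*(u_{*}\otimes u_{*})(s)]}_{C^{\alpha}(\mathbb{R}^{n})}\,\mathrm{d}s$. From ${\|\Phi(\tau,\cdot-y)-\Phi(\tau,\cdot)\|}_{L^{1}(\mathbb{R}^{n})}\lesssim\min(1,|y|\tau^{-1/2})\lesssim|y|^{\alpha}\tau^{-\alpha/2}$ one gets ${[\Phi(\tau)*g]}_{C^{\alpha}(\mathbb{R}^{n})}\lesssim\tau^{-\alpha/2}{\|g\|}_{L^{\overline{\infty}}(\mathbb{R}^{n})}$, bounded on $[a,b]$ since $\tau=t-t_{0}\geq a/2$; similarly ${\|\nabla\mathcal{T}(\tau,\cdot-y)-\nabla\mathcal{T}(\tau,\cdot)\|}_{L^{1}(\mathbb{R}^{n})}\lesssim\min(\tau^{-1/2},|y|\tau^{-1})\lesssim|y|^{\alpha}\tau^{-(1+\alpha)/2}$ gives ${[\nabla\mathcal{T}(\tau)*w]}_{C^{\alpha}(\mathbb{R}^{n})}\lesssim\tau^{-(1+\alpha)/2}{\|w\|}_{L^{\overline{\infty}}(\mathbb{R}^{n})}$, so the time integral is
\begin{equation*}
    \lesssim_{\alpha}\Big(\sup_{s\in(t_{0},b)}{\|(u_{*}\otimes u_{*})(s)\|}_{L^{\overline{\infty}}(\mathbb{R}^{n})}\Big)(t-t_{0})^{(1-\alpha)/2}<\infty
\end{equation*}
because $\alpha<1$. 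Hence ${[u_{*}(t)]}_{C^{\alpha}(\mathbb{R}^{n})}$ is bounded on $[a,b]$ (and in particular $u_{*}(t)$ is represented by a bona fide continuous function).

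The main obstacle is obtaining the whole range $\alpha\in(0,1)$ in the Hölder statement: a direct estimate of ${[B[u_{*},u_{*}](t)]}_{C^{\alpha}(\mathbb{R}^{n})}$ from $t=0$ can only place $(u_{*}\otimes u_{*})(s)$ in the subcritical space $L^{r_{0}/2,\infty}(\mathbb{R}^{n})$, producing both a weight at $s=0$ and a time factor whose integral converges only for $\alpha$ small depending on $r_{0}$, the restriction degenerating as $r_{0}\downarrow n$. Restarting the equation at a positive time — where $u_{*}(t_{0})\in L^{\overline{\infty}}(\mathbb{R}^{n})$ and $(u_{*}\otimes u_{*})(s)$ is genuinely bounded on the interval in play — removes the weight and yields the integrable exponent $(1+\alpha)/2<1$ for every $\alpha\in(0,1)$; the same restart is what handles the failure of strong continuity of the heat semigroup on $L^{p,\infty}(\mathbb{R}^{n})$ when $q=\infty$, since then the restarted data $\Phi(t_{1}/2)*u_{*}(t_{1}/2)$ is smooth with gradient in $L^{p,\infty}(\mathbb{R}^{n})\cap L^{\overline{\infty}}(\mathbb{R}^{n})$. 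The remaining work — justifying the restarted integral equation via Fubini and the two kernel convolution identities, and the dominated-convergence bookkeeping for the bilinear difference — is routine.
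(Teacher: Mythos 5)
Your proposal is correct and follows essentially the same strategy as the paper: convergence at $t=0$ via strong continuity of the heat semigroup on $L^{p,q}$ ($q<\infty$) plus the vanishing bound ${\|B[u_{*},u_{*}](t)\|}_{L^{p,q}}\lesssim t^{\frac12(1-n/r_{0})}$, and interior continuity together with the H\"older bound via the restarted integral equation $u_{*}(t)=\Phi(t-t_{0})*u_{*}(t_{0})-\int_{t_{0}}^{t}\nabla\mathcal{T}(t-s)*(u_{*}\otimes u_{*})(s)\,\mathrm{d}s$ on compact subintervals where ${\|u_{*}(s)\|}_{L^{p,q}}$ and ${\|u_{*}(s)\|}_{L^{\overline{\infty}}}$ are bounded, with the bilinear difference split into a thin time slab plus a kernel-difference integral exactly as in the paper. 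The only genuine divergence is in the H\"older seminorm step: you derive ${[u_{*}(t)]}_{C^{\alpha}}\lesssim (t-t_{0})^{-\alpha/2}{\|u_{*}(t_{0})\|}_{L^{\overline{\infty}}}+\int_{t_{0}}^{t}(t-s)^{-(1+\alpha)/2}{\|u_{*}(s)\|}_{L^{\overline{\infty}}}^{2}\,\mathrm{d}s$ by interpolating $L^{1}$ bounds on translated-kernel increments, $\|K(\tau,\cdot-y)-K(\tau,\cdot)\|_{L^{1}}\lesssim|y|^{\alpha}\tau^{-\ast}$, whereas the paper obtains the identical estimate through a near/far spatial decomposition with pointwise kernel decay and the mean value theorem; your variant is slightly slicker, the paper's is more self-contained with its pointwise estimate \eqref{heat-type-pointwise}, and both are valid.
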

Under the additional assumption that $f$ is weakly divergence-free, one can adapt the arguments of \cite{ozanski2017} to show that $u_{*}$ is divergence-free and smooth in the spatial variables and if, moreover, $f\in L^2(\mathbb{R}^n)$ then it is smooth in both space and time, but we will not do so here.\\

The rest of this paper is organised as follows. In section 3 we give a brief overview of Lorentz spaces. In section 4 we establish properties of the kernels $\Phi$ and $\mathcal{T}$. In section 5 we establish properties of the integral operators $S$, $A$ and $B$. In section 6 we prove Theorem \ref{intro-uniqueness}. In section 7 we prove Theorems \ref{intro-existence} and \ref{intro-leray}. In section 8 we prove Theorem \ref{intro-regularity}.
\section{Lorentz spaces}
For a measurable function $f$ on $\mathbb{R}^{n}$, we define the \textbf{\textit{distribution function}}
\begin{equation}
    \lambda_{f}(y) := |\{x\in\mathbb{R}^{n}\text{ : }|f(x)|>y\}| \quad \text{for }y\in(0,\infty)
\end{equation}
and the \textbf{\textit{decreasing rearrangement}}
\begin{equation}
    f^{*}(t) := \sup\{y\in(0,\infty)\text{ : }\lambda_{f}(y)>t\} \quad \text{for }t\in(0,\infty),
\end{equation}
which satisfies the \textbf{\textit{rearrangement inequality}} (\cite{hunt1966}, inequality (1.9)):
\begin{equation}
    \int_{\mathbb{R}^{n}}|f(x)g(x)|\,\mathrm{d}x \leq \int_{0}^{\infty}f^{*}(t)g^{*}(t)\,\mathrm{d}t.
\end{equation}
The \textbf{\textit{Lorentz quasinorms}} are defined by
\begin{equation}
    {\|f\|}_{L^{p,q}(\mathbb{R}^{n})}^{*} := \left\{\begin{array}{ll}{\left(\frac{q}{p}\int_{0}^{\infty}{\left[t^{1/p}f^{*}(t)\right]}^{q}\,\frac{\mathrm{d}t}{t}\right)}^{1/q} & p\in[1,\infty),\,q\in[1,\infty), \\ \sup_{t\in(0,\infty)}t^{1/p}f^{*}(t) & p\in[1,\infty],\,q=\infty.\end{array}\right.
\end{equation}
The Lorentz quasinorms are a generalisation of the Lebesgue norms (\cite{hunt1966}, p.\ 253):
\begin{equation}
    {\|f\|}_{L^{p,p}(\mathbb{R}^{n})}^{*} = {\|f\|}_{L^{p}(\mathbb{R}^{n})} \quad \text{for }p\in[1,\infty].
\end{equation}
We have the \textbf{\textit{Lorentz embedding}} (\cite{hunt1966}, inequality (1.8)):
\begin{equation}
    {\|f\|}_{L^{p,q_{2}}(\mathbb{R}^{n})}^{*} \leq {\|f\|}_{L^{p,q_{1}}(\mathbb{R}^{n})}^{*} \quad \text{for }p\in[1,\infty)\text{ and }1\leq q_{1}\leq q_{2}\leq\infty.
\end{equation}
If $f$ is supported within a set of measure $T$, then $f^{*}$ is supported within $(0,T)$, in which case we have
\begin{equation}\label{local-embedding}
    {\|f\|}_{L^{p_{1},1}(\mathbb{R}^{n})}^{*} \lesssim_{p_{1},p_{2}} T^{\frac{1}{p_{1}}-\frac{1}{p_{2}}}{\|f\|}_{L^{p_{2},\infty}(\mathbb{R}^{n})}^{*} \quad \text{for }1\leq p_{1}<p_{2}\leq\infty.
\end{equation}
Simple functions (supported on sets of finite measure) are dense in $L^{p,q}(\mathbb{R}^{n})$ for $p,q\in[1,\infty)$ (\cite{hunt1966}, statement (2.4)). Arguing along the lines of (\cite{mouhot2017}, Theorem 1.26), it follows for $p,q\in[1,\infty)$ that $L^{p,q}(\mathbb{R}^{n})$ is separable and contains $C_{c}^{0}(\mathbb{R}^{n})$ as a dense subset. For $p,q\in[1,\infty)$ we deduce the \textbf{\textit{continuity of translation}} result ${\|f(\cdot-h)-f(\cdot)\|}_{L^{p,q}(\mathbb{R}^{n})}^{*}\overset{h\rightarrow0}{\rightarrow}0$ (this is true for $f\in C_{c}^{0}(\mathbb{R}^{n})$ by dominated convergence, so is true for $f\in L^{p,q}(\mathbb{R}^{n})$ by approximation).

\textbf{\textit{Hardy's inequalities}} (\cite{hunt1966}, p.\ 256) state that for $p\in(0,\infty)$ and $q\in[1,\infty)$ we have
\begin{equation}
\begin{aligned}
    {\left(\int_{0}^{\infty}{\left[\int_{0}^{t}t^{-\frac{1}{p}}\phi(s)\,\frac{\mathrm{d}s}{s}\right]}^{q}\,\frac{\mathrm{d}t}{t}\right)}^{\frac{1}{q}} &\leq p{\left(\int_{0}^{\infty}{\left[s^{-\frac{1}{p}}\phi(s)\right]}^{q}\,\frac{\mathrm{d}s}{s}\right)}^{\frac{1}{q}}, \\
    {\left(\int_{0}^{\infty}{\left[\int_{t}^{\infty}t^{\frac{1}{p}}\phi(s)\,\frac{\mathrm{d}s}{s}\right]}^{q}\,\frac{\mathrm{d}t}{t}\right)}^{\frac{1}{q}} &\leq p{\left(\int_{0}^{\infty}{\left[s^{\frac{1}{p}}\phi(s)\right]}^{q}\,\frac{\mathrm{d}s}{s}\right)}^{\frac{1}{q}}.
\end{aligned}
\end{equation}
Hardy's inequalities can be used to prove the following theorem (\cite{hunt1966}, p.\ 264) on the \textbf{\textit{interpolation of operators}}: if $p_{0}\neq p_{1}$, $r_{0}\neq r_{1}$, and $T:L^{p_{0},q_{0}}(\mathbb{R}^{n})+L^{p_{1},q_{1}}(\mathbb{R}^{n})\rightarrow L^{r_{0},s_{0}}(\mathbb{R}^{n})+L^{r_{1},s_{1}}(\mathbb{R}^{n})$ is a function satisfying
\begin{equation}
    |T(f+g)| \leq K(|Tf|+|Tg|) \text{ a.e., } \quad {\|Tf\|}_{L^{r_{i},s_{i}}(\mathbb{R}^{n})}^{*} \leq B_{i}{\|f\|}_{L^{p_{i},q_{i}}(\mathbb{R}^{n})}^{*} \text{ for }i\in\{0,1\},
\end{equation}
then for $\theta\in(0,1)$, $\frac{1}{p_{\theta}}=\frac{1-\theta}{p_{0}}+\frac{\theta}{p_{1}}$ and $\frac{1}{r_{\theta}}=\frac{1-\theta}{r_{0}}+\frac{\theta}{r_{1}}$ we have $L^{p_{\theta},\infty}(\mathbb{R}^{n})\subseteq L^{p_{0},q_{0}}(\mathbb{R}^{n})+L^{p_{1},q_{1}}(\mathbb{R}^{n})$ and
\begin{equation}
    {\|Tf\|}_{L^{r_{\theta},s}(\mathbb{R}^{n})}^{*} \leq B_{\theta}{\|f\|}_{L^{p_{\theta},q}(\mathbb{R}^{n})}^{*} \quad \text{for }q\leq s,
\end{equation}
where $B_{\theta}$ depends on $\theta$, the Lorentz indices and the constants $K,B_{0},B_{1}$.

For a measurable function $f$ on $\mathbb{R}^{n}$, we define the \textbf{\textit{maximal function}}
\begin{equation}
    f^{**}(t) := \sup_{|A|\geq t}\frac{1}{|A|}\int_{A}|f(x)|\,\mathrm{d}x = \frac{1}{t}\int_{0}^{t}f^{*}(s)\,\mathrm{d}s \quad \text{for }t\in(0,\infty).
\end{equation}
To prove the equivalence of the two expressions for $f^{**}(t)$, it suffices to consider simple functions and apply monotone convergence. If $f$ is a simple function (supported on sets of finite measure), then the map
\begin{equation}
    \tau_{f}(x) := |\{z\in\mathbb{R}^{n}\text{ : }|f(z)|>|f(x)|\}|+|\{z\in\mathbb{R}^{n}\text{ : }|z|<|x|,|f(z)|=|f(x)|\}|
\end{equation}
defines a measure preserving transformation $\tau_{f}:(\mathbb{R}^{n},\mathrm{d}x)\rightarrow((0,\infty),\mathrm{d}s)$ satisfying $|f|=f^{*}\circ\tau_{f}$ almost everywhere, so
\begin{equation}
    \sup_{|A|\geq t}\frac{1}{|A|}\int_{A}|f(x)|\,\mathrm{d}x = \sup_{|B|\geq t}\frac{1}{|B|}\int_{B}f^{*}(s)\,\mathrm{d}s = \frac{1}{t}\int_{0}^{t}f^{*}(s)\,\mathrm{d}s.
\end{equation}
For $p\in(1,\infty]$ and $q\in[1,\infty]$ satisfying $(p=\infty\Rightarrow q=\infty)$, it follows from the expression $f^{**}(t)=\frac{1}{t}\int_{0}^{t}f^{*}(s)\,\mathrm{d}s$ and Hardy's inequality that
\begin{equation}
    f^{*}(t)\leq f^{**}(t), \quad {\|f\|}_{L^{p,q}(\mathbb{R}^{n})}^{*} \leq {\|f\|}_{L^{p,q}(\mathbb{R}^{n})} \leq p'{\|f\|}_{L^{p,q}(\mathbb{R}^{n})}^{*},
\end{equation}
where the \textbf{\textit{Lorentz norms}} are defined by
\begin{equation}
    {\|f\|}_{L^{p,q}(\mathbb{R}^{n})} := \left\{\begin{array}{ll}{\left(\frac{q}{p}\int_{0}^{\infty}{\left[t^{1/p}f^{**}(t)\right]}^{q}\,\frac{\mathrm{d}t}{t}\right)}^{1/q} & p\in(1,\infty),\,q\in[1,\infty), \\ \sup_{t\in(0,\infty)}t^{1/p}f^{**}(t) & p\in(1,\infty],\,q=\infty.\end{array}\right.
\end{equation}

If $(E,\mathcal{E})$ is a measurable space, and $f$ is a measurable function on $E\times\mathbb{R}^{n}$ satisfying $f(\theta)\in L^{p,q}(\mathbb{R}^{n})$ for all $\theta\in E$, then the expressions $\lambda_{f(\theta)}(y)$, ${[f(\theta)]}^{*}(t)$ and ${[f(\theta)]}^{**}(t)$ define measurable functions on $E\times(0,\infty)$, so the expressions ${\|f(\theta)\|}_{L^{p,q}(\mathbb{R}^{n})}^{*}$ and ${\|f(\theta)\|}_{L^{p,q}(\mathbb{R}^{n})}$ define measurable functions on $E$. If $\mu$ is a $\sigma$-finite measure on $(E,\mathcal{E})$, then by the expression $f^{**}(t) = \sup_{|A|\geq t}\frac{1}{|A|}\int_{A}|f(x)|\,\mathrm{d}x$ and Fubini's theorem we have ${\left[\int_{E}f(\theta,\cdot)\,\mathrm{d}\mu(\theta)\right]}^{**}(t)\leq\int_{E}{[f(\theta)]}^{**}(t)\,\mathrm{d}\mu(\theta)$, so by Minkowski's inequality for Lebesgue spaces we obtain \textbf{\textit{Minkowski's inequality for Lorentz spaces}}
\begin{equation}
    {\left\|\int_{E}f(\theta,\cdot)\,\mathrm{d}\mu(\theta)\right\|}_{L^{p,q}(\mathbb{R}^{n})} \leq \int_{E}{\|f(\theta)\|}_{L^{p,q}(\mathbb{R}^{n})}\,\mathrm{d}\mu(\theta).
\end{equation}
In the case where $E$ is countable and $\mu$ is the counting measure, we deduce that if $\sum_{m}{\|f_{m}\|}_{L^{p,q}(\mathbb{R}^{n})}<\infty$ then $\sum_{m}f_{m}$ converges almost everywhere and in $L^{p,q}(\mathbb{R}^{n})$. Therefore ${\|\cdot\|}_{L^{p,q}(\mathbb{R}^{n})}$ defines a Banach space norm on $L^{p,q}(\mathbb{R}^{n})$.

For measurable functions $f$ and $g$ on $\mathbb{R}^{n}$, we have the \textbf{\textit{product inequality}}
\begin{equation}\label{prodineqlor}
    {(fg)}^{*}(t) \leq f^{**}(t)g^{**}(t).
\end{equation}
The product inequality is proved by noting that ${(fg)}^{*}(t)={\left({(\sqrt{|fg|})}^{*}(t)\right)}^{2}\leq{\left({(\sqrt{|fg|})}^{**}(t)\right)}^{2}$, which we estimate using the rearrangement inequality and H\"{o}lder's inequality. The product inequality combines with H\"{o}lder's inequality to give estimates of the form ${\|fg\|}_{L^{p,q}(\mathbb{R}^{n})}^{*}\lesssim_{p_{0},p_{1},q_{0},q_{1}}{\|f\|}_{L^{p_{0},q_{0}}(\mathbb{R}^{n})}{\|g\|}_{L^{p_{1},q_{1}}(\mathbb{R}^{n})}$ for $\frac{1}{p}=\frac{1}{p_{0}}+\frac{1}{p_{1}}$ and $\frac{1}{q}=\frac{1}{q_{0}}+\frac{1}{q_{1}}$. These estimates are complemented by the trivial estimate \linebreak ${\|fg\|}_{L^{p,q}(\mathbb{R}^{n})}\leq{\|f\|}_{L^{p,q}(\mathbb{R}^{n})}{\|g\|}_{L^{\infty}(\mathbb{R}^{n})}$.

We also have the \textbf{\textit{convolution inequality}}
\begin{equation}
    {(f*g)}^{**}(t) \leq {\|f\|}_{L^{1}(\mathbb{R}^{n})}g^{**}(t),
\end{equation}
which is proved using Fubini's theorem, and implies the estimate ${\|f*g\|}_{L^{p,q}(\mathbb{R}^{n})}\leq{\|f\|}_{L^{1}(\mathbb{R}^{n})}{\|g\|}_{L^{p,q}(\mathbb{R}^{n})}$. Combining the convolution inequality ${\|f*g\|}_{L^{p,\infty}(\mathbb{R}^{n})}\leq{\|f\|}_{L^{1}(\mathbb{R}^{n})}{\|g\|}_{L^{p,\infty}(\mathbb{R}^{n})}$ with the rearrangement inequality ${\|f*g\|}_{L^{\overline{\infty}}(\mathbb{R}^{n})}\leq p'{\|f\|}_{L^{p',1}(\mathbb{R}^{n})}^{*}{\|g\|}_{L^{p,\infty}(\mathbb{R}^{n})}^{*}$ (where ${\|\cdot\|}_{L^{\overline{\infty}}(\mathbb{R}^{n})}$ denotes the supremum norm), we can use interpolation of operators to deduce estimates of the form ${\|f*g\|}_{L^{r,s}(\mathbb{R}^{n})}\lesssim_{p,q,s}{\|f\|}_{L^{q,s}(\mathbb{R}^{n})}{\|g\|}_{L^{p,\infty}(\mathbb{R}^{n})}$ for $1+\frac{1}{r}=\frac{1}{p}+\frac{1}{q}$.

For $p,q\in[1,\infty)$ satisfying $(p=1\Rightarrow q=1)$, the convolution inequality ${\|f*g\|}_{L^{p,q}(\mathbb{R}^{n})}^{*}\lesssim_{p,q}{\|f\|}_{L^{1}(\mathbb{R}^{n})}{\|g\|}_{L^{p,q}(\mathbb{R}^{n})}^{*}$ (proved for $p=q=1$ by Fubini's theorem) allows us to prove the \textbf{\textit{approximation of identity}} result ${\left\|\int_{\mathbb{R}^{n}}\epsilon^{-n}f(y/\epsilon)\left(g(\cdot-y)-g(\cdot)\right)\,\mathrm{d}y\right\|}_{L^{p,q}(\mathbb{R}^{n})}^{*}\overset{\epsilon\rightarrow0}{\rightarrow}0$ for $f\in L^{1}(\mathbb{R}^{n})$ and $g\in L^{p,q}(\mathbb{R}^{n})$ (this is true for $g\in C_{c}^{0}(\mathbb{R}^{n})$ by the substitution $z=y/\epsilon$ and dominated convergence, so is true for $g\in L^{p,q}(\mathbb{R}^{n})$ by approximation). The same result holds if $L^{p,q}(\mathbb{R}^{n})$ is replaced by the space $C_{b,u}^{0}(\mathbb{R}^{n})$ of bounded uniformly continuous functions (approximate $f$ in $L^{1}(\mathbb{R}^{n})$ by a compactly supported function, then apply continuity of translation in $C_{b,u}^{0}(\mathbb{R}^{n})$). Using density of $C_{c}^{0}(\mathbb{R}^{n})$, and approximating using a smooth, compactly supported mollifier, we deduce that $C_{c}^{\infty}(\mathbb{R}^{n})$ is dense in $L^{p,q}(\mathbb{R}^{n})$ for $p,q\in[1,\infty)$ satisfying $(p=1\Rightarrow q=1)$.

For $\frac{1}{p}=\frac{1-\theta}{p_{0}}+\frac{\theta}{p_{1}}$, $1<p_{0}<p_{1}\leq\infty$ and $\theta\in(0,1)$, we have the \textbf{\textit{interpolation of norms}}
\begin{equation}
    {\|f\|}_{L^{p,\infty}(\mathbb{R}^{n})}\leq{\|f\|}_{L^{p_{0},\infty}(\mathbb{R}^{n})}^{1-\theta}{\|f\|}_{L^{p_{1},\infty}(\mathbb{R}^{n})}^{\theta}, \quad {\|f\|}_{L^{p,1}(\mathbb{R}^{n})}\leq\frac{2{\|f\|}_{L^{p_{0},\infty}(\mathbb{R}^{n})}^{1-\theta}{\|f\|}_{L^{p_{1},\infty}(\mathbb{R}^{n})}^{\theta}}{p\left(\frac{1}{p_{0}}-\frac{1}{p_{1}}\right)\theta^{1-\theta}{(1-\theta)}^{\theta}}.
\end{equation}
The first inequality follows immediately from the definition of Lorentz norms, while the second inequality is derived by writing ${\|f\|}_{L^{p,1}(\mathbb{R}^{n})}=\frac{1}{p}\left(\int_{0}^{t_{0}}+\int_{t_{0}}^{\infty}\right)t^{1/p}f^{**}(t)\,\frac{\mathrm{d}t}{t}$ with $t_{0}^{\frac{1}{p_{0}}-\frac{1}{p_{1}}}=\frac{(1-\theta){\|f\|}_{L^{p_{0},\infty}(\mathbb{R}^{n})}}{\theta{\|f\|}_{L^{p_{1},\infty}(\mathbb{R}^{n})}}$. The same inequalities hold with Lorentz norms replaced by Lorentz quasinorms (and the assumption $p_{0}>1$ replaced by $p_{0}\geq1$).
\section{Heat-type kernels}
For $t\in(0,\infty)$ and $x\in\mathbb{R}^{n}$ we define
\begin{equation}\label{kernel-definition}
    \Phi(t,x) := \frac{1}{{(2\pi)}^{n}}\int_{\mathbb{R}^{n}}e^{\mathrm{i}x\cdot\xi-t{|\xi|}^{2}}\,\mathrm{d}\xi, \quad \mathcal{T}_{ij}(t,x) := \frac{1}{{(2\pi)}^{n}}\int_{\mathbb{R}^{n}}e^{\mathrm{i}x\cdot\xi-t{|\xi|}^{2}}\left(\delta_{ij}-\frac{\xi_{i}\xi_{j}}{{|\xi|}^{2}}\right)\,\mathrm{d}\xi.
\end{equation}
The heat kernel $\Phi$ is just the inverse Fourier transform of the Gaussian $e^{-t{|\xi|}^{2}}$, for which we have the explicit formula
\begin{equation}\label{explicit-heat-kernel}
    \Phi(t,x) = \frac{1}{{(4\pi t)}^{n/2}}e^{-{|x|}^{2}/4t}.
\end{equation}
More generally, the heat kernel $\Phi$ and the Oseen kernel $\mathcal{T}$ are examples of the heat-type kernel
\begin{equation}\label{heat-type-kernel}
    K(t,x) := \frac{1}{{(2\pi)}^{n}}\int_{\mathbb{R}^{n}}e^{\mathrm{i}x\cdot\xi-t{|\xi|}^{2}}h(\xi)\,\mathrm{d}\xi \quad \text{for }t\in(0,\infty)\text{ and }x\in\mathbb{R}^{n},
\end{equation}
where $h$ is a smooth function on $\mathbb{R}^{n}\setminus\{0\}$ which is \textbf{\textit{homogeneous}} in the sense that $h(\lambda\xi)=h(\xi)$ for all $\xi\in\mathbb{R}^{n}\setminus\{0\}$ and $\lambda\in(0,\infty)$. Writing $D=-\mathrm{i}\nabla$, and differentiating (\ref{heat-type-kernel}) under the integral, we deduce that $K$ is smooth in the spatial variables, satisfying
\begin{equation}\label{heat-type-derivative}
    D^{\alpha}K(t,x) = \frac{1}{{(2\pi)}^{n}}\int_{\mathbb{R}^{n}}e^{\mathrm{i}x\cdot\xi-t{|\xi|}^{2}}\xi^{\alpha}h(\xi)\,\mathrm{d}\xi \quad \text{for }t\in(0,\infty)\text{ and }x\in\mathbb{R}^{n}
\end{equation}
for every multi-index $\alpha\in\mathbb{Z}_{\geq0}^{n}$. Using the representation (\ref{heat-type-derivative}), we obtain the following pointwise estimates.
\begin{lemma}
    For each $\alpha\in\mathbb{Z}_{\geq0}^{n}$, we have the pointwise estimate
    \begin{equation}\label{heat-type-pointwise}
        |D^{\alpha}K(t,x)| \lesssim_{\alpha,h,n} t^{-\frac{1}{2}(n+|\alpha|)}{\left(1+\frac{|x|}{\sqrt{t}}\right)}^{-(n+|\alpha|)} \quad \text{for }t\in(0,\infty)\text{ and }x\in\mathbb{R}^{n}.
    \end{equation}
\end{lemma}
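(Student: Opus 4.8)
The plan is to strip off the time dependence by parabolic scaling, reducing \eqref{heat-type-pointwise} to a pointwise bound on a single function of one variable, and then to estimate that function by combining trivial size estimates in the region of frequency space near the origin with repeated integration by parts in the region away from it.

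First I would substitute $\xi = \eta/\sqrt{t}$ in the representation \eqref{heat-type-derivative}. Since $h$ is homogeneous of degree $0$, this yields $D^{\alpha}K(t,x) = t^{-\frac12(n+|\alpha|)}G_{\alpha}\!\left(x/\sqrt{t}\right)$, where, writing $N := n+|\alpha|$,
$$G_{\alpha}(z) := \frac{1}{(2\pi)^{n}}\int_{\mathbb{R}^{n}}e^{\mathrm{i}z\cdot\eta - |\eta|^{2}}\,g(\eta)\,\mathrm{d}\eta, \qquad g(\eta) := \eta^{\alpha}h(\eta),$$
so that \eqref{heat-type-pointwise} is equivalent to $|G_{\alpha}(z)| \lesssim_{\alpha,h,n}(1+|z|)^{-N}$. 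Note that $g$ is smooth on $\mathbb{R}^{n}\setminus\{0\}$ and homogeneous of degree $|\alpha|$, so the usual chain-rule computation gives $|D^{\beta}g(\eta)|\lesssim_{\beta,h,n}|\eta|^{|\alpha|-|\beta|}$ for every multi-index $\beta$; hence $F := e^{-|\cdot|^{2}}g$ satisfies $|D^{\beta}F(\eta)|\lesssim_{\beta,h,n}|\eta|^{|\alpha|-|\beta|}$ for $|\eta|\le 1$ and $|D^{\beta}F(\eta)|\lesssim_{\beta,h,n}e^{-|\eta|^{2}/2}$ for $|\eta|\ge 1$. Boundedness of $G_{\alpha}$ is immediate from $|G_{\alpha}(z)|\le (2\pi)^{-n}\|h\|_{L^{\infty}(S^{n-1})}\int e^{-|\eta|^{2}}|\eta|^{|\alpha|}\,\mathrm{d}\eta$, which already gives the bound for $|z|\le 1$.

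For $|z|\ge 1$ I would set $r := 1/|z|\le 1$ and split $G_{\alpha}(z)$ into the integral over $\{|\eta|<r\}$ and the integral over $\{|\eta|\ge r\}$. The first piece is bounded by $\int_{|\eta|<r}|F|\lesssim\int_{|\eta|<r}|\eta|^{|\alpha|}\,\mathrm{d}\eta\lesssim r^{N}=|z|^{-N}$. On the second piece, which is a region avoiding the singularity of $g$, I would integrate by parts $m := N+1$ times using the operator $L := (\mathrm{i}|z|^{2})^{-1}z\cdot\nabla_{\eta}$, which fixes $e^{\mathrm{i}z\cdot\eta}$; the Gaussian decay of $F$ kills the boundary terms at infinity. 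The boundary term on $\{|\eta|=r\}$ produced at the $l$-th integration by parts ($1\le l\le m$) is, by the estimates above, of size $\lesssim |z|^{-l}\,r^{n-1}\,r^{|\alpha|-(l-1)} = |z|^{-l}r^{N-l}=|z|^{-N}$, while the leftover interior integral is bounded by $|z|^{-m}\int_{|\eta|\ge r}|D^{m}F|\lesssim |z|^{-(N+1)}\bigl(\int_{r}^{1}\rho^{-2}\,\mathrm{d}\rho + 1\bigr)\lesssim |z|^{-(N+1)}\cdot|z| = |z|^{-N}$, the key point being that $m=N+1$ makes the radial exponent $|\alpha|-m+n-1$ equal to exactly $-2$. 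Summing the finitely many contributions gives $|G_{\alpha}(z)|\lesssim_{\alpha,h,n}|z|^{-N}$ for $|z|\ge 1$, and together with the $|z|\le 1$ bound this is $|G_{\alpha}(z)|\lesssim_{\alpha,h,n}(1+|z|)^{-N}$; undoing the scaling yields \eqref{heat-type-pointwise}.

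The only genuine obstacle is the singularity of $h$, hence of $g$ and of $D^{m}F\sim|\eta|^{|\alpha|-m}$, at the frequency origin: one cannot integrate by parts $N$ or more times over all of $\mathbb{R}^{n}$, since $D^{N}F$ is not integrable near $0$. Excising the ball $\{|\eta|<r\}$ with $r=1/|z|$ repairs this, but at the cost of boundary terms on $\{|\eta|=r\}$ that turn out to be exactly of the critical size $|z|^{-N}$, so they must be tracked rather than discarded; the balance between the size of the excised ball, the boundary terms, and the interior remainder is what dictates $r\sim 1/|z|$ and $m\sim N$. (Alternatively one could avoid boundary terms by inserting a smooth Littlewood--Paley decomposition $g=\sum_{j}g(\cdot)\phi(2^{j}\cdot)$, integrating by parts on each dyadic annulus and summing; this trades the boundary-term bookkeeping for a geometric-series estimate in $j$, which again needs one integration by parts beyond the naive count to avoid a logarithmic loss.)
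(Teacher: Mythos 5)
Your argument is correct: the scaling step reducing to $t=1$ is exact (homogeneity of $h$ gives $D^{\alpha}K(t,x)=t^{-\frac12(n+|\alpha|)}G_{\alpha}(x/\sqrt t)$), the derivative bounds $|D^{\beta}F(\eta)|\lesssim|\eta|^{|\alpha|-|\beta|}$ near the origin and Gaussian decay at infinity are what the Leibniz rule and homogeneity give, and the exponent bookkeeping in the three contributions (low-frequency ball of radius $1/|z|$, the $N+1$ boundary terms each of size exactly $|z|^{-N}$, and the interior remainder where $m=N+1$ produces the integrable radial power $\rho^{-2}$) all checks out, so $|G_{\alpha}(z)|\lesssim_{\alpha,h,n}(1+|z|)^{-N}$ follows. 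Your route differs from the paper's in the decomposition: the paper inserts a smooth dyadic partition $\sum_{j}\sigma(2^{-j}\xi)=1$, rescales every block to a single profile $\Delta_{0}D^{\alpha}K$, proves rapid decay $(1+|y|)^{-N'}$ of that profile uniformly in the Gaussian parameter by integrating by parts against $e^{\mathrm i y\cdot\xi}$ on a fixed annulus (so there is no singularity and no boundary term), and then sums the dyadic series, splitting at $2^{j}|x|\le1$; the price is that one must take $N'$ strictly larger than $n+|\alpha|$ so the high-frequency tail of the sum converges, which is precisely the ``one extra integration by parts to avoid a logarithmic loss'' you mention in your parenthetical. Your sharp cutoff at the single critical scale $|\eta|\sim1/|z|$ avoids any summation over scales and keeps the whole estimate at one radius, but in exchange you must track the sphere boundary terms, which sit exactly at the borderline size $|z|^{-N}$ and so cannot be discarded wholesale -- you handle this correctly by noting there are only finitely many ($N+1$) of them. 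A further cosmetic difference: the paper first derives the scale-invariant bound $|D^{\alpha}K(t,x)|\lesssim|x|^{-(n+|\alpha|)}$ uniformly in $t$ and only afterwards combines it with boundedness of $D^{\alpha}K(1,\cdot)$ through self-similarity, whereas you scale out $t$ at the start; the two orderings are equivalent. Both proofs are complete and yield the same dependence of the constant on $\alpha,h,n$.
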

\begin{proof}
    We follow the argument of (\cite{lemarierieusset2002}, Propostion 11.1). Let $\rho:\mathbb{R}^{n}\rightarrow[0,1]$ be a smooth function satisfying $\rho(\xi)=1$ for $|\xi|\leq1/2$, and $\rho(\xi)=0$ for $|\xi|\geq1$. Then $\sigma(\xi):=\rho(\xi/2)-\rho(\xi)$ defines a smooth function $\sigma:\mathbb{R}^{n}\rightarrow[0,1]$ supported on $\{\xi\in\mathbb{R}^{n}\mid1/2\leq|\xi|\leq2\}$, with the property that $\sum_{j\in\mathbb{Z}}\sigma(2^{-j}\xi)=1$ for all $\xi\in\mathbb{R}^{n}\setminus\{0\}$. By dominated convergence, we therefore have $D^{\alpha}K(t,x)=\sum_{j\in\mathbb{Z}}\Delta_{j}D^{\alpha}K(t,x)$, where we {\em define}
    \begin{equation}\label{littlewood-paley}
        \Delta_{j}D^{\alpha}K(t,x) := \frac{1}{{(2\pi)}^{n}}\int_{\mathbb{R}^{n}}e^{\mathrm{i}x\cdot\xi-t{|\xi|}^{2}}\xi^{\alpha}h(\xi)\sigma(2^{-j}\xi)\,\mathrm{d}\xi \quad \text{for }t\in(0,\infty)\text{ and }x\in\mathbb{R}^{n}.
    \end{equation}
    By change of variable in (\ref{littlewood-paley}), we have $\Delta_{j}D^{\alpha}K(t,x)=2^{j(n+|\alpha|)}\Delta_{0}D^{\alpha}K(2^{2j}t,2^{j}x)$. By the Leibniz rule, for $\beta\in\mathbb{Z}_{\geq0}^{n}$, $s>0$ and $\xi\in\mathbb{R}^{n}$ with $1/2\leq\xi\leq2$ we have $\left|D^{\beta}[\xi\mapsto e^{-s{|\xi|}^{2}}\xi^{\alpha}h(\xi)\sigma(\xi)](\xi)\right|\lesssim e^{-s{|\xi|}^{2}}{(1+s{|\xi|}^{2})}^{|\beta|}$, where the implied constant is independent of $s$, so ${\left\|D^{\beta}[\xi\mapsto e^{-s{|\xi|}^{2}}\xi^{\alpha}h(\xi)\sigma(\xi)]\right\|}_{L^{1}(\mathbb{R}^{n})}$ is bounded independently of $s$. Taking the definition (\ref{littlewood-paley}) of $\Delta_{0}D^{\alpha}K(s,y)$ and integrating by parts, we deduce for $\beta\in\mathbb{Z}_{\geq0}^{n}$ that $y^{\beta}\Delta_{0}D^{\alpha}K(s,y)$ is bounded independently of $s>0$ and $y\in\mathbb{R}^{n}$. For $N\in\mathbb{N}$, therefore ${(1+|y|)}^{N}\Delta_{0}D^{\alpha}K(s,y)$ is bounded independently of $s>0$ and $y\in\mathbb{R}^{n}$. Choosing $N>n+|\alpha|$, for $t>0$ and $x\in\mathbb{R}^{n}\setminus\{0\}$ we therefore have
    \begin{equation}
    \begin{aligned}
        |D^{\alpha}K(t,x)| &\leq \sum_{j\in\mathbb{Z}}2^{j(n+|\alpha|)}|\Delta_{0}D^{\alpha}K(2^{2j}t,2^{j}x)| \lesssim \sum_{j\in\mathbb{Z}}\frac{2^{j(n+|\alpha|)}}{{(1+2^{j}|x|)}^{N}} \\
        &\qquad \leq \sum_{2^{j}|x|\leq1}2^{j(n+|\alpha|)} + \sum_{2^{j}|x|>1}\frac{2^{j(n+|\alpha|-N)}}{{|x|}^{N}} \lesssim {|x|}^{-(n+|\alpha|)}.
    \end{aligned}
    \end{equation}
    By change of variable in (\ref{heat-type-derivative}), we have $D^{\alpha}K(t,x)=t^{-(n+|\alpha|)/2}D^{\alpha}K(1,\frac{x}{\sqrt{t}})$. But $y\mapsto D^{\alpha}K(1,y)$ is a continuous function on $\mathbb{R}^{n}$ satisfying $|D^{\alpha}K(1,y)|\lesssim{|y|}^{-(n+|\alpha|)}$ for $y\in\mathbb{R}^{n}\setminus\{0\}$, so $|D^{\alpha}K(1,y)|\lesssim{(1+|y|)}^{-(n+|\alpha|)}$ for $y\in\mathbb{R}^{n}$. Therefore $D^{\alpha}K(t,x)=t^{-(n+|\alpha|)/2}D^{\alpha}K(1,\frac{x}{\sqrt{t}})$ satisfies (\ref{heat-type-pointwise}).
\end{proof}
We deduce the following Lorentz estimates
\begin{lemma}
    For $p\in[1,\infty)$, the heat kernel satisfies the estimate
    \begin{equation}\label{heat-lorentz}
        {\|\Phi(t)\|}_{L^{p,1}(\mathbb{R}^{n})}^{*} \lesssim_{n} t^{-\frac{n}{2p'}}.
    \end{equation}
    More generally, for $p\in[1,\infty)$ and $\alpha\in\mathbb{Z}_{\geq0}^{n}$ with $|\alpha|>0$, the heat-type kernel satisfies
    \begin{equation}\label{heat-type-lorentz}
        {\|D^{\alpha}K(t)\|}_{L^{p,1}(\mathbb{R}^{n})}^{*} \lesssim_{\alpha,h,n} t^{-\frac{1}{2}\left(|\alpha|+\frac{n}{p'}\right)}.
    \end{equation}
\end{lemma}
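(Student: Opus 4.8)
The plan is to remove the time dependence by scaling, reducing both estimates to the statement that $\Phi(1,\cdot)$, and $D^{\alpha}K(1,\cdot)$ for $|\alpha|>0$, lie in $L^{p,1}(\mathbb{R}^{n})$, and then to obtain this membership by comparison with the elementary profile $g_{M}(x):=(1+|x|)^{-M}$, which belongs to $L^{p,1}(\mathbb{R}^{n})$ precisely when $M>n/p$.

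\emph{Step 1 (scaling).} For $\lambda>0$ write $f_{\lambda}(x):=f(\lambda x)$; then $\lambda_{f_{\lambda}}(y)=\lambda^{-n}\lambda_{f}(y)$, so $f_{\lambda}^{*}(t)=f^{*}(\lambda^{n}t)$, and a substitution in \eqref{lorstarnorm} gives ${\|f_{\lambda}\|}_{L^{p,q}(\mathbb{R}^{n})}^{*}=\lambda^{-n/p}{\|f\|}_{L^{p,q}(\mathbb{R}^{n})}^{*}$. Applying this with $\lambda=t^{-1/2}$ to the identities $\Phi(t,x)=t^{-n/2}\Phi(1,x/\sqrt{t})$ (immediate from \eqref{explicit-heat-kernel}) and $D^{\alpha}K(t,x)=t^{-(n+|\alpha|)/2}D^{\alpha}K(1,x/\sqrt{t})$ (from the change of variable in \eqref{heat-type-derivative} carried out above), and using $\tfrac{n}{2}-\tfrac{n}{2p}=\tfrac{n}{2p'}$, one gets
\[
    {\|\Phi(t)\|}_{L^{p,1}(\mathbb{R}^{n})}^{*}=t^{-\frac{n}{2p'}}{\|\Phi(1)\|}_{L^{p,1}(\mathbb{R}^{n})}^{*},\qquad {\|D^{\alpha}K(t)\|}_{L^{p,1}(\mathbb{R}^{n})}^{*}=t^{-\frac{1}{2}\left(|\alpha|+\frac{n}{p'}\right)}{\|D^{\alpha}K(1)\|}_{L^{p,1}(\mathbb{R}^{n})}^{*},
\]
so it remains to bound the two right-hand quasinorms by constants depending only on the stated parameters.

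\emph{Step 2 (model profile).} For $M>0$ one computes $\lambda_{g_{M}}(y)=\omega_{n}(y^{-1/M}-1)^{n}$ for $y\in(0,1)$ and $=0$ for $y\geq1$ (with $\omega_{n}=|\{|x|<1\}|$), hence $g_{M}^{*}(t)=(1+(t/\omega_{n})^{1/n})^{-M}$; in particular $g_{M}^{*}(t)\leq1$ for all $t>0$ and $g_{M}^{*}(t)\lesssim_{n}t^{-M/n}$ for $t\geq\omega_{n}$. Since ${\|g_{M}\|}_{L^{p,1}(\mathbb{R}^{n})}^{*}=\tfrac{1}{p}\int_{0}^{\infty}t^{\frac{1}{p}-1}g_{M}^{*}(t)\,\mathrm{d}t$, the contribution of $(0,\omega_{n})$ is finite because $\tfrac{1}{p}-1>-1$ for $p<\infty$, and that of $(\omega_{n},\infty)$ is finite as soon as $\tfrac{1}{p}-1-\tfrac{M}{n}<-1$, i.e. $M>n/p$; in that case ${\|g_{M}\|}_{L^{p,1}(\mathbb{R}^{n})}^{*}\lesssim_{n,p,M}1$.

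\emph{Step 3 (conclusion).} If $|\alpha|>0$, then \eqref{heat-type-pointwise} at $t=1$ gives $|D^{\alpha}K(1,x)|\lesssim_{\alpha,h,n}g_{n+|\alpha|}(x)$, and $n+|\alpha|>n\geq n/p$ for every $p\in[1,\infty)$; by monotonicity of ${\|\cdot\|}_{L^{p,1}(\mathbb{R}^{n})}^{*}$ in $|f|$ together with Step 2, this bounds ${\|D^{\alpha}K(1)\|}_{L^{p,1}(\mathbb{R}^{n})}^{*}\lesssim_{\alpha,h,n}1$, which with Step 1 gives \eqref{heat-type-lorentz}. For $\Phi$ the bound \eqref{heat-type-pointwise} with $\alpha=0$ only yields the endpoint exponent $n$, so instead one uses \eqref{explicit-heat-kernel} directly: $|\Phi(1,x)|\lesssim_{n}g_{n+1}(x)$ since the Gaussian decays faster than any polynomial, and now $n+1>n\geq n/p$, so Step 2 gives ${\|\Phi(1)\|}_{L^{p,1}(\mathbb{R}^{n})}^{*}\lesssim_{n}1$ and hence \eqref{heat-lorentz}. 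There is no essential obstacle here; the only point to watch is the endpoint $p=1$, where $n/p=n$ and $g_{n}\notin L^{1,1}=L^{1}$, which is exactly why the general statement excludes $\alpha=0$ and why the heat-kernel case must exploit the Gaussian's superpolynomial decay rather than \eqref{heat-type-pointwise} alone.
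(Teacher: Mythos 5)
Your proof is correct and in substance it is the paper's own argument: both dominate $\Phi(t)$ and $D^{\alpha}K(t)$ by a decreasing radial profile, pass to the decreasing rearrangement, and integrate the $L^{p,1}$ quasinorm; your preliminary rescaling to $t=1$ (rather than carrying the $t$-dependence through the substitution) and your comparison of the Gaussian with $(1+|x|)^{-(n+1)}$ (rather than integrating $e^{-\sigma^{2/n}}$ directly) are only cosmetic differences. The one point you should tighten is the dependence of the constant on $p$: the lemma asserts bounds $\lesssim_{n}$ and $\lesssim_{\alpha,h,n}$, i.e.\ uniform over $p\in[1,\infty)$, and the paper genuinely uses this uniformity later (the constant $\alpha_{n}$ in Section 7 is chosen independently of $r$, with $p=r'$ varying). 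Your Step 2 only claims ${\|g_{M}\|}_{L^{p,1}(\mathbb{R}^{n})}^{*}\lesssim_{n,p,M}1$, yet Step 3 quotes a $p$-free bound, so as written there is an unjustified (though harmless) jump. It closes in one line because in both applications $M\geq n+1$: the near part is $\frac{1}{p}\int_{0}^{\omega_{n}}t^{\frac{1}{p}-1}\,\mathrm{d}t=\omega_{n}^{1/p}\leq 1\vee\omega_{n}$, and for the tail, using $g_{M}^{*}(t)\leq\omega_{n}^{M/n}t^{-M/n}$,
\begin{equation*}
    \frac{1}{p}\int_{\omega_{n}}^{\infty}t^{\frac{1}{p}-1}\,\omega_{n}^{\frac{M}{n}}t^{-\frac{M}{n}}\,\mathrm{d}t
    = \frac{\omega_{n}^{1/p}}{\frac{pM}{n}-1}
    \leq n\left(1\vee\omega_{n}\right),
\end{equation*}
since $\frac{pM}{n}-1\geq\frac{M}{n}-1\geq\frac{1}{n}$ for $p\geq1$ and $M\geq n+1$. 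With that remark your constants depend only on $n$ (resp.\ $\alpha,h,n$), matching \eqref{heat-lorentz} and \eqref{heat-type-lorentz} exactly as in the paper's computation, where the corresponding factors $\frac{1}{p}\int_{0}^{1}\sigma^{1/p}\frac{\mathrm{d}\sigma}{\sigma}=1$ and the tail integrals are manifestly $p$-independent. Your closing observation about the endpoint $p=1$ (why $\alpha=0$ is excluded from \eqref{heat-type-lorentz} and why the Gaussian's superpolynomial decay is needed for \eqref{heat-lorentz}) is accurate.
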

\begin{proof}
    By (\ref{explicit-heat-kernel}) and (\ref{heat-type-pointwise}), the kernels $\Phi$ and $D^{\alpha}K$ are dominated by decreasing radial functions:
    \begin{equation}
        |\Phi(t,x)| \leq C_{1}t^{-\frac{n}{2}}e^{-\frac{{|x|}^{2}}{4t}}, \quad |D^{\alpha}K(t,x)| \leq C_{2}t^{-\frac{1}{2}(n+|\alpha|)}{\left(1+\frac{|x|}{\sqrt{t}}\right)}^{-(n+|\alpha|)},
    \end{equation}
    where the constants $C_{1},C_{2}$ are independent of $p$. We easily deduce the estimates
    \begin{equation}
        {[\Phi(t)]}^{*}(\tau) \leq C_{1}t^{-\frac{n}{2}}e^{-\frac{{(\tau/\omega_{n})}^{2/n}}{4t}}, \quad {[D^{\alpha}K(t)]}^{*}(\tau) \leq C_{2}t^{-\frac{1}{2}(n+|\alpha|)}{\left(1+\frac{{(\tau/\omega_{n})}^{1/n}}{\sqrt{t}}\right)}^{-(n+|\alpha|)},
    \end{equation}
    where $\omega_{n}$ is the Lebesgue measure of the unit ball in $\mathbb{R}^{n}$. We can therefore estimate
    \begin{equation}
    \begin{aligned}
        {\|\Phi(t)\|}_{L^{p,1}(\mathbb{R}^{n})}^{*} &\leq \frac{1}{p}\int_{0}^{\infty}\tau^{1/p}C_{1}t^{-\frac{n}{2}}e^{-\frac{{(\tau/\omega_{n})}^{2/n}}{4t}}\frac{\mathrm{d}\tau}{\tau} \\
        &= \frac{1}{p}\int_{0}^{\infty}{\left({(4t)}^{\frac{n}{2}}\omega_{n}\sigma\right)}^{1/p}C_{1}t^{-\frac{n}{2}}e^{-\sigma^{2/n}}\frac{\mathrm{d}\sigma}{\sigma} \\
        &\leq \left(1 \vee 2^{n}\omega_{n}\right)C_{1}t^{-\frac{n}{2p'}}\frac{1}{p}\int_{0}^{\infty}\sigma^{1/p}e^{-\sigma^{2/n}}\frac{\mathrm{d}\sigma}{\sigma} \\
        &\leq \left(1 \vee 2^{n}\omega_{n}\right)C_{1}t^{-\frac{n}{2p'}}\left(\frac{1}{p}\int_{0}^{1}\sigma^{1/p}\frac{\mathrm{d}\sigma}{\sigma} + \frac{1}{p}\int_{1}^{\infty}e^{-\sigma^{2/n}}\,\mathrm{d}\sigma\right) \\
        &\leq \left(1 \vee 2^{n}\omega_{n}\right)C_{1}t^{-\frac{n}{2p'}}\left(1 + \int_{1}^{\infty}e^{-\sigma^{2/n}}\,\mathrm{d}\sigma\right)
    \end{aligned}
    \end{equation}
    and
    \begin{equation}
    \begin{aligned}
        {\|D^{\alpha}K(t)\|}_{L^{p,1}(\mathbb{R}^{n})}^{*} &\leq \frac{1}{p}\int_{0}^{\infty}\tau^{1/p}C_{2}t^{-\frac{1}{2}(n+|\alpha|)}{\left(1+\frac{{(\tau/\omega_{n})}^{1/n}}{\sqrt{t}}\right)}^{-(n+|\alpha|)}\frac{\mathrm{d}\tau}{\tau} \\
        &= \frac{1}{p}\int_{0}^{\infty}{\left(t^{\frac{n}{2}}\omega_{n}\sigma\right)}^{1/p}C_{2}t^{-\frac{1}{2}(n+|\alpha|)}{\left(1+\sigma^{1/n}\right)}^{-(n+|\alpha|)}\frac{\mathrm{d}\sigma}{\sigma} \\
        &\leq \left(1\vee\omega_{n}\right)C_{2}t^{-\frac{1}{2}\left(|\alpha|+\frac{n}{p'}\right)}\frac{1}{p}\int_{0}^{\infty}\sigma^{1/p}{\left(1+\sigma^{1/n}\right)}^{-(n+|\alpha|)}\frac{\mathrm{d}\sigma}{\sigma} \\
        &\leq \left(1\vee\omega_{n}\right)C_{2}t^{-\frac{1}{2}\left(|\alpha|+\frac{n}{p'}\right)}\left(\frac{1}{p}\int_{0}^{1}\sigma^{1/p}\frac{\mathrm{d}\sigma}{\sigma} + \frac{1}{p}\int_{1}^{\infty}{\left(1+\sigma^{1/n}\right)}^{-(n+|\alpha|)}\,\mathrm{d}\sigma\right) \\
        &\leq \left(1\vee\omega_{n}\right)C_{2}t^{-\frac{1}{2}\left(|\alpha|+\frac{n}{p'}\right)}\left(1 + \int_{1}^{\infty}{\left(1+\sigma^{1/n}\right)}^{-(n+|\alpha|)}\,\mathrm{d}\sigma\right).
    \end{aligned}
    \end{equation}
\end{proof}
Having established that $D^{\beta}\Phi(s)\in L^{1}(\mathbb{R}^{n})$ for $\beta\in\mathbb{Z}_{\geq0}^{n}$ and $s\in(0,\infty)$, we use the Fourier inversion theorem to deduce that $\mathcal{F}[D^{\beta}\Phi(s)](\xi)=\xi^{\beta}e^{-s{|\xi|}^{2}}$. Combining this information with (\ref{heat-type-derivative}) and using Fubini's theorem, we obtain the following semigroup property.
\begin{lemma}
    For $\alpha,\beta\in\mathbb{Z}_{\geq0}^{n}$, we have
    \begin{equation}\label{heat-type-semigroup}
        D^{\beta}\Phi(s)*D^{\alpha}K(t)=D^{\alpha+\beta}K(s+t) \quad \text{for }s,t\in(0,\infty).
    \end{equation}
\end{lemma}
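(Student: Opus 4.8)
The plan is to pass to the Fourier side, where the convolution becomes a product and the Gaussian factors $e^{-s|\xi|^{2}}$ and $e^{-t|\xi|^{2}}$ combine to $e^{-(s+t)|\xi|^{2}}$. First I record the ingredients. Taking $p=1$ in \eqref{heat-lorentz} (for $\beta=0$) and in \eqref{heat-type-lorentz} applied to the heat-type kernel with symbol $h\equiv1$ (for $|\beta|>0$), we get $D^{\beta}\Phi(s)\in L^{1}(\mathbb{R}^{n})$ for every $s\in(0,\infty)$ and every $\beta\in\mathbb{Z}_{\geq0}^{n}$, and Fourier inversion --- as already noted just above the statement --- gives $\mathcal{F}[D^{\beta}\Phi(s)](\xi)=\xi^{\beta}e^{-s|\xi|^{2}}$. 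Since $h$ is homogeneous of degree $0$ it is bounded on $\mathbb{R}^{n}\setminus\{0\}$, so $\xi\mapsto\xi^{\alpha}h(\xi)e^{-t|\xi|^{2}}\in L^{1}(\mathbb{R}^{n})$ and \eqref{heat-type-derivative} exhibits $D^{\alpha}K(t)$ as its inverse Fourier transform; by \eqref{heat-type-pointwise} the function $D^{\alpha}K(t)$ is bounded, so the convolution on the left of \eqref{heat-type-semigroup} is, for every $x\in\mathbb{R}^{n}$, the absolutely convergent integral $\int_{\mathbb{R}^{n}}D^{\beta}\Phi(s)(x-y)\,D^{\alpha}K(t)(y)\,\mathrm{d}y$.

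Then I substitute the representation \eqref{heat-type-derivative} for $D^{\alpha}K(t)(y)$ into this integral and apply Fubini's theorem to interchange the $y$- and $\xi$-integrations; this is legitimate because the double integrand is dominated by $|D^{\beta}\Phi(s)(x-y)|\cdot|\xi^{\alpha}h(\xi)|e^{-t|\xi|^{2}}$, a product of an $L^{1}(\mathrm{d}y)$ function and an $L^{1}(\mathrm{d}\xi)$ function, hence integrable on $\mathbb{R}^{n}\times\mathbb{R}^{n}$. After the interchange, the inner integral is
\[
    \int_{\mathbb{R}^{n}}D^{\beta}\Phi(s)(x-y)e^{\mathrm{i}y\cdot\xi}\,\mathrm{d}y
    = e^{\mathrm{i}x\cdot\xi}\int_{\mathbb{R}^{n}}D^{\beta}\Phi(s)(z)e^{-\mathrm{i}z\cdot\xi}\,\mathrm{d}z
    = e^{\mathrm{i}x\cdot\xi}\,\xi^{\beta}e^{-s|\xi|^{2}},
\]
using the change of variables $z=x-y$ and the value of $\mathcal{F}[D^{\beta}\Phi(s)]$ above. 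Feeding this back in, the remaining $\xi$-integral is precisely $\tfrac{1}{(2\pi)^{n}}\int_{\mathbb{R}^{n}}e^{\mathrm{i}x\cdot\xi-(s+t)|\xi|^{2}}\xi^{\alpha+\beta}h(\xi)\,\mathrm{d}\xi$, which equals $D^{\alpha+\beta}K(s+t,x)$ by \eqref{heat-type-derivative}; since $x\in\mathbb{R}^{n}$ was arbitrary, \eqref{heat-type-semigroup} follows.

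I do not expect any real obstacle: the only points needing care are the integrability bookkeeping that licenses Fubini and keeping the normalisation of $\mathcal{F}$ consistent with the $\tfrac{1}{(2\pi)^{n}}$ appearing in \eqref{heat-type-derivative}, both of which are routine given the pointwise and Lorentz estimates already in hand.
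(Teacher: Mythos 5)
Your proposal is correct and follows essentially the same route as the paper: insert the Fourier representation \eqref{heat-type-derivative} of $D^{\alpha}K(t)$ into the convolution integral, interchange the $y$- and $\xi$-integrations by Fubini (with the same product-of-$L^{1}$-norms domination the paper uses), evaluate the inner integral via $\mathcal{F}[D^{\beta}\Phi(s)](\xi)=\xi^{\beta}e^{-s|\xi|^{2}}$, and reassemble using \eqref{heat-type-derivative} again. The extra bookkeeping you supply (the $p=1$ Lorentz estimates giving $D^{\beta}\Phi(s)\in L^{1}$, boundedness of $h$ and of $D^{\alpha}K(t)$ to justify absolute convergence) is consistent with what the paper assumes implicitly.
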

\begin{proof}
    We compute
    \begin{equation}
    \begin{aligned}
        \left[D^{\beta}\Phi(s)*D^{\alpha}K(t)\right](x) &= \int_{\mathbb{R}^{n}}D^{\beta}\Phi(s,y)D^{\alpha}K(t,x-y)\,\mathrm{d}y \\
        &= \int_{\mathbb{R}^{n}}D^{\beta}\Phi(s,y)\left(\frac{1}{{(2\pi)}^{n}}\int_{\mathbb{R}^{n}}e^{\mathrm{i}(x-y)\cdot\xi-t{|\xi|}^{2}}\xi^{\alpha}h(\xi)\,\mathrm{d}\xi\right)\,\mathrm{d}y \\
        &= \frac{1}{{(2\pi)}^{n}}\int_{\mathbb{R}^{n}}e^{\mathrm{i}x\cdot\xi-t{|\xi|}^{2}}\xi^{\alpha}h(\xi)\left(\int_{\mathbb{R}^{n}}e^{-\mathrm{i}y\cdot\xi}D^{\beta}\Phi(s,y)\,\mathrm{d}y\right)\,\mathrm{d}\xi \\
        &= \frac{1}{{(2\pi)}^{n}}\int_{\mathbb{R}^{n}}e^{\mathrm{i}x\cdot\xi-t{|\xi|}^{2}}\xi^{\alpha}h(\xi)\xi^{\beta}e^{-s{|\xi|}^{2}}\,\mathrm{d}\xi \\
        &= D^{\alpha+\beta}K(s+t,x),
    \end{aligned}
    \end{equation}
    where we use (\ref{heat-type-derivative}) in the second line, Fubini's theorem in the third, $\mathcal{F}[D^{\beta}\Phi(s)](\xi)=\xi^{\beta}e^{-s{|\xi|}^{2}}$ in the fourth, and (\ref{heat-type-derivative}) in the fifth. The use of Fubini's theorem is justified because the double integral is dominated by
    \begin{equation}
        \left(\int_{\mathbb{R}^{n}}|D^{\beta}\Phi(s,y)|\,\mathrm{d}y\right)\left(\int_{\mathbb{R}^{n}}e^{-t{|\xi|}^{2}}{|\xi|}^{|\alpha|}|h(\xi)|\,\mathrm{d}\xi\right).
    \end{equation}
\end{proof}
The semigroup property allows us to prove the following continuity result.
\begin{lemma}\label{heat-type-continuity}
    The heat kernel defines a continuous function $\Phi:(0,\infty)\rightarrow L^{1}(\mathbb{R}^{n})$. More generally, for $\alpha\in\mathbb{Z}_{\geq0}^{n}$ with $|\alpha|>0$, the heat-type kernel defines a continuous function $D^{\alpha}K:(0,\infty)\rightarrow L^{1}(\mathbb{R}^{n})$.
\end{lemma}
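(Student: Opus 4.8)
The plan is to establish continuity of the heat kernel $\Phi$ directly from the explicit Gaussian formula (\ref{explicit-heat-kernel}), and then to deduce continuity of the general kernel $D^{\alpha}K$ (with $|\alpha|>0$) from it via the semigroup property (\ref{heat-type-semigroup}).

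For $\Phi$, I would exploit the exact scaling contained in (\ref{explicit-heat-kernel}): writing $g_{\lambda}(x):=\lambda^{-n}g(x/\lambda)$ for $\lambda\in(0,\infty)$, a direct computation shows $\Phi(t)=\left(\Phi(t_{0})\right)_{\sqrt{t/t_{0}}}$ for all $t,t_{0}\in(0,\infty)$. Since $\Phi(t_{0})\in L^{1}(\mathbb{R}^{n})$ (it is a probability density, or apply (\ref{heat-lorentz}) with $p=1$), continuity of $t\mapsto\Phi(t)$ at $t_{0}$ reduces to the strong continuity on $L^{1}(\mathbb{R}^{n})$ of the dilation action $\lambda\mapsto g_{\lambda}$ near $\lambda=1$. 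This last fact is standard: it holds for $g\in C_{c}^{0}(\mathbb{R}^{n})$ by the dominated convergence theorem (for $\lambda$ ranging over a bounded interval bounded away from $0$, the functions $g_{\lambda}$ are supported in a common compact set and uniformly bounded, while $g_{\lambda}(x)\to g(x)$ pointwise as $\lambda\to1$), and it extends to all of $L^{1}(\mathbb{R}^{n})$ by the density of $C_{c}^{0}(\mathbb{R}^{n})$ together with the isometry $\|g_{\lambda}\|_{L^{1}(\mathbb{R}^{n})}=\|g\|_{L^{1}(\mathbb{R}^{n})}$.

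For $D^{\alpha}K$ with $|\alpha|>0$, fix $t_{0}\in(0,\infty)$. For $t>t_{0}/2$, the semigroup property (\ref{heat-type-semigroup}) with $\beta=0$ gives $D^{\alpha}K(t)=\Phi(t-t_{0}/2)*D^{\alpha}K(t_{0}/2)$, and $D^{\alpha}K(t_{0}/2)\in L^{1}(\mathbb{R}^{n})$ by (\ref{heat-type-lorentz}) with $p=1$ (recalling $\|\cdot\|_{L^{1,1}(\mathbb{R}^{n})}^{*}=\|\cdot\|_{L^{1}(\mathbb{R}^{n})}$). Hence, for $t,t'>t_{0}/2$, Young's inequality for convolutions yields
\[
    \|D^{\alpha}K(t)-D^{\alpha}K(t')\|_{L^{1}(\mathbb{R}^{n})}\leq\|\Phi(t-t_{0}/2)-\Phi(t'-t_{0}/2)\|_{L^{1}(\mathbb{R}^{n})}\,\|D^{\alpha}K(t_{0}/2)\|_{L^{1}(\mathbb{R}^{n})},
\]
and since $t-t_{0}/2,t'-t_{0}/2\to t_{0}/2>0$ as $t,t'\to t_{0}$, the right-hand side tends to $0$ by the continuity of $\Phi:(0,\infty)\to L^{1}(\mathbb{R}^{n})$ just established. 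This gives continuity of $D^{\alpha}K$ at $t_{0}$, and $t_{0}$ was arbitrary.

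The only step needing genuine care is the base case: the semigroup property alone is self-referential, since by itself it merely reduces continuity at $t_{0}$ to continuity at $t_{0}/2$, so some external input is required. That input is supplied cheaply by the explicit Gaussian formula together with the elementary strong continuity of dilations on $L^{1}$; everything downstream — the passage to general $D^{\alpha}K$ — is then a one-line consequence of (\ref{heat-type-semigroup}), Young's inequality, and the $L^{1}$ bound furnished by (\ref{heat-type-lorentz}).
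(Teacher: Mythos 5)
Your proof is correct. The second half coincides with the paper's argument: the paper likewise fixes $t_{0}$, writes $D^{\alpha}K(t)=\Phi(t-t_{0})*D^{\alpha}K(t_{0})$ via the semigroup property, notes $D^{\alpha}K(t_{0})\in L^{1}(\mathbb{R}^{n})$ when $|\alpha|>0$, and concludes from continuity of convolution $L^{1}\times L^{1}\to L^{1}$ (your Young's-inequality estimate is exactly this bilinear continuity). Where you differ is the base case for $\Phi$: the paper computes $\partial_{t}\Phi$ from the explicit Gaussian, determines $\sup_{t\in[t_{0},t_{1}]}\Phi(t,x)$ case by case (with the middle regime bounded by $C_{n}|x|^{-n}$ on an annulus) to produce an integrable envelope, and then applies dominated convergence; you instead exploit the self-similarity $\Phi(t)=(\Phi(t_{0}))_{\sqrt{t/t_{0}}}$ and reduce everything to strong continuity of the dilation group on $L^{1}(\mathbb{R}^{n})$, proved via $C_{c}^{0}$-density and the $L^{1}$ dilation isometry. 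Your route avoids the calculus construction of the dominating function and is arguably cleaner, at the cost of invoking density of $C_{c}^{0}(\mathbb{R}^{n})$ in $L^{1}(\mathbb{R}^{n})$ (available, and indeed used elsewhere in the paper's Section 3); the paper's envelope argument is more self-contained and works directly pointwise. Your closing observation that the semigroup identity alone is self-referential and genuinely needs an external base case is also accurate, and both proofs supply that input from the explicit formula \eqref{explicit-heat-kernel}, just through different features of it (its $t$-derivative versus its scaling).
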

\begin{proof}
    Differentiating (\ref{explicit-heat-kernel}) with respect to $t$, we have
    \begin{equation}
        \Phi'(t,x) = \left(\frac{{|x|}^{2}}{4t^{2}}-\frac{n}{2t}\right)\Phi(t,x),
    \end{equation}
    so that, for each $x\in\mathbb{R}^{n}$ and $0<t_{0}<t_{1}<\infty$,
    \begin{equation}
        \sup_{t\in[t_{0},t_{1}]}\Phi(t,x) = \left\{\begin{array}{ll}
            \Phi(t_{0},x) & \text{if }{|x|}^{2}\leq 2nt_{0}, \\
            \Phi({|x|}^{2}/2n,x) & \text{if }2nt_{0}<{|x|}^{2}<2nt_{1}, \\
            \Phi(t_{1},x) & \text{if }2nt_{1}\leq{|x|}^{2},
        \end{array}\right.
    \end{equation}
    where $\Phi({|x|}^{2}/2n,x)\lesssim_{n}{|x|}^{-n}$. For each $0<t_{0}<t_{1}<\infty$, we deduce that the function $x\mapsto\sup_{t\in[t_{0},t_{1}]}\Phi(t,x)$ defines an element of $L^{1}(\mathbb{R}^{n})$. By (\ref{explicit-heat-kernel}), we see also that the function $t\mapsto\Phi(t,x)$ is continuous on $(0,\infty)$ for each $x\in\mathbb{R}^{n}$. By dominated convergence, we deduce that the function $\Phi:(0,\infty)\rightarrow L^{1}(\mathbb{R}^{n})$ is continuous.

    For $\alpha\in\mathbb{Z}_{\geq0}^{n}$ and $0<t_{0}<t<\infty$, the semigroup property yields $D^{\alpha}K(t)=\Phi(t-t_{0})*D^{\alpha}K(t_{0})$. Let $t_{0}$ be fixed, and allow $t\in(t_{0},\infty)$ to vary. If $|\alpha|>0$ then $D^{\alpha}K(t_{0})\in L^{1}(\mathbb{R}^{n})$. Using continuity of convolution $L^{1}(\mathbb{R}^{n})\times L^{1}(\mathbb{R}^{n})\rightarrow L^{1}(\mathbb{R}^{n})$, together with continuity of $\Phi:(0,\infty)\rightarrow L^{1}(\mathbb{R}^{n})$, we deduce that the function $D^{\alpha}K:(t_{0},\infty)\rightarrow L^{1}(\mathbb{R}^{n})$ is continuous. Since $t_{0}$ was arbitrary, we deduce that the function $D^{\alpha}K:(0,\infty)\rightarrow L^{1}(\mathbb{R}^{n})$ is continuous.
\end{proof}
\section{Integral operators}
Having established basic properties of Lorentz spaces and the kernels $\Phi$ and $\mathcal{T}$, we are able to study the integral operators
\begin{equation}\label{study-heat-map}
    S_{i}[f](t,x) := \int_{\mathbb{R}^{n}}\Phi(t,x-y)f_{i}(y)\,\mathrm{d}y,
\end{equation}
\begin{equation}\label{study-integral-operator}
    A_{i}[w](t,x) := \int_{0}^{t}\int_{\mathbb{R}^{n}}\nabla_{k}\mathcal{T}_{ij}(t-s,x-y)w_{jk}(s,y)\,\mathrm{d}y\,\mathrm{d}s,
\end{equation}
\begin{equation}\label{study-bilinear-map}
    B_{i}[u,v](t,x) := A_{i}[u\otimes v](t,x).
\end{equation}
For $T\in(0,\infty]$, $\sigma\in\mathbb{R}$, $p\in(1,\overline{\infty}]$, $\alpha\in(1,\infty]$, $q,\beta\in[1,\infty]$ satisfying $(p\in\{\infty,\overline{\infty}\}\Rightarrow q=\infty)$ and $(\alpha=\infty\Rightarrow \beta=\infty)$, and a measurable function $u$ on $(0,T)\times\mathbb{R}^{n}$, we define the norms

\pagebreak

\begin{equation}
    {\|u\|}_{\mathcal{L}_{p,q}^{\alpha,\beta}(T)} := {\left\|t\mapsto\mathbf{1}_{(0,T)}(t){\|u(t)\|}_{L^{p,q}(\mathbb{R}^{n})}\right\|}_{L^{\alpha,\beta}(\mathbb{R})},
\end{equation}
\begin{equation}
    {\|u\|}_{\mathcal{K}_{p,q}^{\sigma}(T)} := \esssup_{t\in(0,T)}t^{-\sigma/2}{\|u(t)\|}_{L^{p,q}(\mathbb{R}^{n})},
\end{equation}
\begin{equation}
    {\|u\|}_{\mathcal{J}_{p,q}^{\sigma}(T)} := \sup_{t\in(0,T)}t^{-\sigma/2}{\|u(t)\|}_{L^{p,q}(\mathbb{R}^{n})}.
\end{equation}
The quasinorms ${\|u\|}_{\mathcal{L}_{p,q}^{\alpha,\beta}(T)}^{*}$, ${\|u\|}_{\mathcal{K}_{p,q}^{\sigma}(T)}^{*}$ and ${\|u\|}_{\mathcal{J}_{p,q}^{\sigma}(T)}^{*}$ are obtained by replacing Lorentz norms with Lorentz quasinorms. We will make use of the following notation: if $u$ is a function defined on an interval $(0,T)$, and $t_{0}\in(0,T)$, then we define the time-shifted function $\tau_{t_{0}}u$ by setting $\tau_{t_{0}}u(t)=u(t+t_{0})$ for each $t\in(0,T-t_{0})$. The following lemma describes the properties of the heat map \eqref{study-heat-map}.
\begin{lemma}
    If $p\in(1,\infty]$ and $q\in[1,\infty]$ satisfy $(p=\infty\Rightarrow q=\infty)$, and $f\in L^{p,q}(\mathbb{R}^{n})$, then $S[f](t,x)$ is defined for all $(t,x)\in(0,\infty)\times\mathbb{R}^{n}$, and satisfies
    \begin{equation}\label{heat-estimate-1}
        {\|S[f]\|}_{\mathcal{J}_{p,q}^{0}(\infty)} \leq {\|f\|}_{L^{p,q}(\mathbb{R}^{n})},
    \end{equation}
    \begin{equation}\label{heat-estimate-2}
        {\|S[f]\|}_{\mathcal{J}_{\overline{\infty},\infty}^{-n/p}(\infty)} \lesssim_{n} p'{\|f\|}_{L^{p,\infty}(\mathbb{R}^{n})}^{*},
    \end{equation}
    \begin{equation}\label{heat-semigroup-property}
        \tau_{t_{0}}S[f] =_{\mathrm{e.}}^{\mathrm{e.}} S[S[f](t_{0})] \quad \text{for all }t_{0}\in(0,\infty).
    \end{equation}
\end{lemma}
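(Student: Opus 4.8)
The plan is to exploit the convolution structure $S[f](t)=\Phi(t)*f$, reducing the three assertions to properties of $\Phi$ that are already in hand: $\Phi(t)\in L^{1}(\mathbb{R}^{n})$ with $\|\Phi(t)\|_{L^{1}(\mathbb{R}^{n})}=1$ (from \eqref{explicit-heat-kernel}), the Lorentz estimate \eqref{heat-lorentz}, and the convolution semigroup identity $\Phi(s)*\Phi(t)=\Phi(s+t)$ for $s,t>0$, which is the case $\alpha=\beta=0$, $h\equiv1$ of \eqref{heat-type-semigroup}. For $p\in(1,\infty)$ I would first derive a pointwise bound: working componentwise and using $|S[f](t,x)|\le(\Phi(t)*|f|)(x)$, the rearrangement inequality together with $f^{*}(s)\le s^{-1/p}\|f\|_{L^{p,\infty}(\mathbb{R}^{n})}^{*}$ gives
\[
\begin{aligned}
|S[f](t,x)| &\le \int_{0}^{\infty}[\Phi(t)]^{*}(s)\,f^{*}(s)\,\mathrm{d}s \le \|f\|_{L^{p,\infty}(\mathbb{R}^{n})}^{*}\int_{0}^{\infty}s^{-1/p}[\Phi(t)]^{*}(s)\,\mathrm{d}s \\
&= p'\,\|\Phi(t)\|_{L^{p',1}(\mathbb{R}^{n})}^{*}\,\|f\|_{L^{p,\infty}(\mathbb{R}^{n})}^{*},
\end{aligned}
\]
which is finite for every $(t,x)$ --- hence $S[f](t,x)$ is everywhere defined --- since $L^{p,q}\subseteq L^{p,\infty}$ by \eqref{basic-relation-2} and $\Phi(t)\in L^{p',1}$ by \eqref{heat-lorentz}. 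Applying \eqref{heat-lorentz} with $p$ replaced by $p'$, so that $(p')'=p$, bounds $\|\Phi(t)\|_{L^{p',1}(\mathbb{R}^{n})}^{*}$ by $\lesssim_{n}t^{-n/(2p)}$; multiplying through by $t^{n/(2p)}=t^{-\sigma/2}$ with $\sigma=-n/p$ and taking the supremum over $x$ and $t$ yields \eqref{heat-estimate-2}. For $p=\infty$ (so $p'=1$, $\sigma=0$) the same bound is immediate from $|S[f](t,x)|\le\|\Phi(t)\|_{L^{1}(\mathbb{R}^{n})}\|f\|_{L^{\infty}(\mathbb{R}^{n})}$ and $\|\Phi(t)\|_{L^{1}(\mathbb{R}^{n})}=1$.

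For \eqref{heat-estimate-1}, the exponent $\sigma=0$ makes the weight $t^{-\sigma/2}$ trivial, so it suffices to prove $\|S[f](t)\|_{L^{p,q}(\mathbb{R}^{n})}\le\|f\|_{L^{p,q}(\mathbb{R}^{n})}$ for each fixed $t$; this is exactly the convolution inequality $\|\Phi(t)*f\|_{L^{p,q}(\mathbb{R}^{n})}\le\|\Phi(t)\|_{L^{1}(\mathbb{R}^{n})}\|f\|_{L^{p,q}(\mathbb{R}^{n})}$ recorded in Section 3 (and trivial when $p=\infty$), combined with $\|\Phi(t)\|_{L^{1}(\mathbb{R}^{n})}=1$. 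Taking the supremum over $t\in(0,\infty)$ gives the estimate; in particular $S[f](t_{0})\in L^{p,q}(\mathbb{R}^{n})$ for every $t_{0}>0$, so that $S[S[f](t_{0})]$ is a legitimate instance of the heat map.

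For the semigroup property \eqref{heat-semigroup-property}, fix $t_{0}>0$ and let $t>0$, $x\in\mathbb{R}^{n}$. I would write $S[S[f](t_{0})](t,x)$ as the iterated integral $\int_{\mathbb{R}^{n}}\Phi(t,x-z)\big(\int_{\mathbb{R}^{n}}\Phi(t_{0},z-y)f(y)\,\mathrm{d}y\big)\,\mathrm{d}z$ and apply Fubini's theorem to interchange the integrations; this is justified because the associated absolute-value double integral equals $\int_{\mathbb{R}^{n}}|f(y)|\,(\Phi(t)*\Phi(t_{0}))(x-y)\,\mathrm{d}y=\int_{\mathbb{R}^{n}}|f(y)|\,\Phi(t+t_{0},x-y)\,\mathrm{d}y$, which is finite by the pointwise bound of the first paragraph. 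The interchange, together with $\Phi(t)*\Phi(t_{0})=\Phi(t+t_{0})$, turns the expression into $\int_{\mathbb{R}^{n}}\Phi(t+t_{0},x-y)f(y)\,\mathrm{d}y=S[f](t+t_{0},x)$, valid for every $(t,x)$, i.e.\ $\tau_{t_{0}}S[f]=_{\mathrm{e.}}^{\mathrm{e.}}S[S[f](t_{0})]$.

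I do not expect any substantive obstacle; the lemma is a routine consequence of the convolution representation and the already-established kernel estimates. The only points demanding a little care are verifying that $S[f](t,x)$ is defined for \emph{every} $(t,x)$ (handled above via the rearrangement inequality) and the endpoint $p=\infty$, where the Lorentz quasinorms collapse to the $L^{\infty}$ norm and each assertion must be read off directly from $\|\Phi(t)\|_{L^{1}(\mathbb{R}^{n})}=1$.
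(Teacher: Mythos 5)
Your proposal is correct and follows essentially the same route as the paper: the convolution inequality $\|\Phi(t)*f\|_{L^{p,q}}\le\|\Phi(t)\|_{L^{1}}\|f\|_{L^{p,q}}$ for \eqref{heat-estimate-1}, the rearrangement inequality together with \eqref{heat-lorentz} for \eqref{heat-estimate-2}, and Fubini combined with $\Phi(t)*\Phi(t_{0})=\Phi(t+t_{0})$ for \eqref{heat-semigroup-property}. The only differences are cosmetic (you make the pointwise well-definedness bound and the Tonelli justification explicit, where the paper bounds the double integral by $p'\|\Phi(t)\|_{L^{1}}\|\Phi(t_{0})\|_{L^{p',1}}^{*}\|f\|_{L^{p,\infty}}^{*}$).
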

\begin{proof}
    We estimate
    \begin{equation}
        {\|S[f](t)\|}_{L^{p,q}(\mathbb{R}^{n})}\leq{\|\Phi(t)\|}_{L^{1}(\mathbb{R}^{n})}{\|f\|}_{L^{p,q}(\mathbb{R}^{n})}={\|f\|}_{L^{p,q}(\mathbb{R}^{n})}
    \end{equation}
    and
    \begin{equation}
        {\|S[f](t)\|}_{L^{\overline{\infty}}(\mathbb{R}^{n})} \leq p'{\|\Phi(t)\|}_{L^{p',1}(\mathbb{R}^{n})}^{*}{\|f\|}_{L^{p,\infty}(\mathbb{R}^{n})}^{*} \lesssim_{n} p'{\|f\|}_{L^{p,\infty}(\mathbb{R}^{n})}^{*}.
     \end{equation}
    For $t_{0}\in(0,\infty)$, $t\in(0,\infty)$ and $x\in\mathbb{R}^{n}$ we have
    \begin{equation}
    \begin{aligned}
        \tau_{t_{0}}S[f](t,x) &= \int_{\mathbb{R}^{n}}\Phi(t+t_{0},y)f(x-y)\,\mathrm{d}y \\
        &= \int_{\mathbb{R}^{n}}\left(\int_{\mathbb{R}^{n}}\Phi(t,z)\Phi(t_{0},y-z)\,\mathrm{d}z\right)f(x-y)\,\mathrm{d}y \\
        &= \int_{\mathbb{R}^{n}}\Phi(t,z)\left(\int_{\mathbb{R}^{n}}\Phi(t_{0},y-z)f(x-y)\,\mathrm{d}y\right)\,\mathrm{d}z \\
        &= \int_{\mathbb{R}^{n}}\Phi(t,z)S[f](t_{0},x-z)\,\mathrm{d}z \\
        &= S[S[f](t_{0})](t,x),
    \end{aligned}
    \end{equation}
    where the use of Fubini's theorem in the third line is justified by the estimate
    \begin{equation}
    \begin{aligned}
        \int_{\mathbb{R}^{n}}\int_{\mathbb{R}^{n}}\Phi(t,z)\Phi(t_{0},y-z)|f(x-y)|\,\mathrm{d}z\,\mathrm{d}y &\leq {\|\Phi(t)\|}_{L^{1}(\mathbb{R}^{n})}{\|\Phi(t_{0})*|f|\|}_{L^{\infty}(\mathbb{R}^{n})} \\
        &\leq p'{\|\Phi(t)\|}_{L^{1}(\mathbb{R}^{n})}{\|\Phi(t_{0})\|}_{L^{p',1}(\mathbb{R}^{n})}^{*}{\|f\|}_{L^{p,\infty}(\mathbb{R}^{n})}^{*}.
    \end{aligned}
    \end{equation}
\end{proof}
Our product estimates for Lorentz spaces yields the inequalities
\begin{equation}\label{product-estimate-1}
    {\|u\otimes v\|}_{\mathcal{L}_{\frac{r}{2},\infty}^{\frac{\alpha}{2},1\vee\frac{\beta}{2}}(T)} \lesssim_{r,\alpha,\beta} {\|u\|}_{\mathcal{L}_{r,\infty}^{\alpha,\beta}(T)}{\|v\|}_{\mathcal{L}_{r,\infty}^{\alpha,\beta}(T)},
\end{equation}
\begin{equation}\label{product-estimate-2}
    {\|u\otimes v\|}_{\mathcal{K}_{\frac{r}{2},\infty}^{2\sigma}(T)}^{*} \leq {\|u\|}_{\mathcal{K}_{r,\infty}^{\sigma}(T)}{\|v\|}_{\mathcal{K}_{r,\infty}^{\sigma}(T)},
\end{equation}
\begin{equation}\label{product-estimate-3}
    {\|u\otimes v\|}_{\mathcal{K}_{p,q}^{\sigma}(T)} \leq {\|u\|}_{\mathcal{K}_{p,q}^{0}(T)}{\|v\|}_{\mathcal{K}_{\infty,\infty}^{\sigma}(T)},
\end{equation}
for $T\in(0,\infty]$, $\sigma\in\mathbb{R}$, $p\in(1,\infty]$, $r,\alpha\in(2,\infty]$, $q,\beta\in[1,\infty]$, $(p=\infty\Rightarrow q=\infty)$ and $(\alpha=\infty\Rightarrow\beta=\infty)$. The following lemma describes properties of the integral operator \eqref{study-integral-operator}, which combine with the product estimates \eqref{product-estimate-1}-\eqref{product-estimate-3} to give properties of the bilinear operator \eqref{study-bilinear-map}.
\begin{lemma}\label{semigrouplem}
    \begin{enumerate}[label=(\roman*)]
        \item
        Assume that $r\in(n,\infty]$, $\alpha\in(2,\infty)$, $\beta\in[1,\infty]$ and $\frac{2}{\alpha}+\frac{n}{r}\leq1$. Let $T\in(0,\infty)$, and let $w\in\mathcal{L}_{\frac{r}{2},\infty}^{\frac{\alpha}{2},1\vee\frac{\beta}{2}}(T)$. Then $A[w](t,x)$ is defined for almost every $(t,x)\in(0,T)\times\mathbb{R}^{n}$; in the case $r=\infty$, for almost every $t\in(0,T)$ we have that $A[w](t,x)$ is defined for all $x\in\mathbb{R}^{n}$. Moreover, we have the estimate
        \begin{equation}\label{uniqueness-bilinear-estimate}
            {\|A[w]\|}_{\mathcal{L}_{\overline{r},\infty}^{\alpha,\beta}(T)} \lesssim_{n,r,\alpha,\beta} T^{\frac{1}{2}\left(1-\frac{2}{\alpha}-\frac{n}{r}\right)}{\|w\|}_{\mathcal{L}_{\frac{r}{2},\infty}^{\frac{\alpha}{2},1\vee\frac{\beta}{2}}(T)},
        \end{equation}
        and the identity
        \begin{equation}\label{uniqueness-semigroup}
            \tau_{t_{0}}A[w] =_{\overline{r}}^{\mathrm{a.e.}} S[A[w](t_{0})] + A[\tau_{t_{0}}w] \quad \text{for almost every }t_{0}\in(0,T).
        \end{equation}
        \item
        Assume that $r\in(n,\infty]$, $\sigma\in(-1,\infty)$ and $-\sigma+\frac{n}{r}\leq1$. Let $T\in(0,\infty)$, and let $w\in\mathcal{K}_{\frac{r}{2},\infty}^{2\sigma}(T)$. Then for all $t\in(0,T)$ we have that $A[w](t,x)$ is defined for almost every $x\in\mathbb{R}^{n}$; in the case $r=\infty$, we have that $A[w](t,x)$ is defined for all $(t,x)\in(0,T)\times\mathbb{R}^{n}$. Moreover, we have the estimate
        \begin{equation}\label{existence-bilinear-estimate}
            {\|A[w]\|}_{\mathcal{J}_{\overline{r},\infty}^{\sigma}(T)} \lesssim_{n} {\left(\frac{r}{2}\right)}'T^{\frac{1}{2}\left(1+\sigma-\frac{n}{r}\right)}{\|w\|}_{\mathcal{K}_{\frac{r}{2},\infty}^{2\sigma}(T)}^{*}\int_{0}^{1}{(1-s)}^{-\frac{1}{2}\left(1+\frac{n}{r}\right)}s^{\sigma}\,\mathrm{d}s,
        \end{equation}
        and the identity
        \begin{equation}\label{existence-semigroup}
            \tau_{t_{0}}A[w] =_{\overline{r}}^{\mathrm{e.}} S[A[w](t_{0})] + A[\tau_{t_{0}}w] \quad \text{for all }t_{0}\in(0,T).
        \end{equation}
        If $w\in\mathcal{K}_{p,q}^{\sigma}(T)$ for some $p\in(1,\infty]$ and $q\in[1,\infty]$ satisfying $(p=\infty\Rightarrow q=\infty)$, then we have the estimate
        \begin{equation}\label{regularity-bilinear-estimate}
            {\|A[w]\|}_{\mathcal{J}_{p,q}^{0}(T)} \lesssim_{n} T^{\frac{1}{2}\left(1+\sigma\right)}{\|w\|}_{\mathcal{K}_{p,q}^{\sigma}(T)}\int_{0}^{1}{(1-s)}^{-\frac{1}{2}}s^{\frac{\sigma}{2}}\,\mathrm{d}s.
        \end{equation}
    \end{enumerate}
\end{lemma}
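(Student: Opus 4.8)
The plan is to work from the identity $A[w](t,x)=\int_0^t\big(\nabla\mathcal{T}(t-s)*w(s)\big)(x)\,\mathrm{d}s$ (a time integral of spatial convolutions, valid once Fubini is justified), where $\nabla\mathcal{T}(t-s)*w(s)$ denotes the vector field with $i$-th component $\int_{\mathbb{R}^n}\nabla_k\mathcal{T}_{ij}(t-s,\cdot-y)w_{jk}(s,y)\,\mathrm{d}y$. For $0<s<t<T$, combining the kernel bound \eqref{heat-type-lorentz} (used with $|\alpha|=1$; its endpoint case $p=1$ reads $\|\nabla\mathcal{T}(\tau)\|_{L^1(\mathbb{R}^n)}\lesssim_n\tau^{-1/2}$, since $L^{1,1}=L^1$) with the Lorentz convolution inequalities of Section 3 yields the pointwise-in-$(s,t)$ bounds
\[
\|\nabla\mathcal{T}(t-s)*w(s)\|_{L^{\overline{r},\infty}(\mathbb{R}^n)}^{*}\lesssim_{n}{(\tfrac r2)}'\,(t-s)^{-\frac12(1+\frac nr)}\,\|w(s)\|_{L^{r/2,\infty}(\mathbb{R}^n)}^{*},\qquad\|\nabla\mathcal{T}(t-s)*w(s)\|_{L^{p,q}(\mathbb{R}^n)}\le\|\nabla\mathcal{T}(t-s)\|_{L^1(\mathbb{R}^n)}\|w(s)\|_{L^{p,q}(\mathbb{R}^n)},
\]
the first obtained for $r=\infty$ from Young's inequality $\|f*g\|_{L^{\overline{\infty}}}\le\|f\|_{L^1}\|g\|_{L^{\overline{\infty}}}$ and for $r<\infty$ from an O'Neil-type inequality using $\nabla\mathcal{T}(\tau)\in L^{r',1}(\mathbb{R}^n)$ (again estimated by \eqref{heat-type-lorentz}), with the explicit constant ${(\tfrac r2)}'$ of \eqref{existence-bilinear-estimate} coming from the duality pairing $\|f*g\|_{L^{\overline{\infty}}}\le p'\|f\|_{L^{p',1}}^{*}\|g\|_{L^{p,\infty}}^{*}$ with $p=r/2$ at the sup-norm endpoint. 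Applied to $|\nabla\mathcal{T}(t-s)|$ and $|w(s)|$ these same inequalities give $\big(|\nabla\mathcal{T}(t-s)|*|w(s)|\big)(x)<\infty$ for a.e.\ $x$ (for every $x$ if $r=\infty$, where the convolution is bounded), and then Minkowski's inequality for Lorentz spaces bounds $\big\|\int_0^t|\nabla\mathcal{T}(t-s)|*|w(s)|\,\mathrm{d}s\big\|$ by $\int_0^t\||\nabla\mathcal{T}(t-s)|*|w(s)|\|\,\mathrm{d}s$, which is finite — for a.e.\ $t$ in case (i), and for every $t>0$ in case (ii), using $\|w(s)\|^{*}\le s^{\sigma}\|w\|_{\mathcal{K}^{2\sigma}_{r/2,\infty}(T)}^{*}$ — so $A[w](t,x)$ is absolutely convergent for a.e.\ $x$. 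Here the role of subcriticality already appears: $r>n$ makes $(t-s)^{-\frac12(1+\frac nr)}$ locally integrable near $s=t$, and $\sigma>-1$ (resp.\ the local integrability of $s\mapsto\|w(s)\|$ in case (i)) controls the behaviour near $s=0$.

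Next, the norm estimates are just the $s$-integration. For \eqref{existence-bilinear-estimate}, insert $\|w(s)\|_{L^{r/2,\infty}}^{*}\le s^{\sigma}\|w\|_{\mathcal{K}_{r/2,\infty}^{2\sigma}(T)}^{*}$ and compute
\[
\int_0^t(t-s)^{-\frac12(1+\frac nr)}s^{\sigma}\,\mathrm{d}s=t^{\,1+\sigma-\frac12(1+\frac nr)}\int_0^1(1-s)^{-\frac12(1+\frac nr)}s^{\sigma}\,\mathrm{d}s,
\]
the Beta integral being finite exactly because $\tfrac nr<1$ and $\sigma>-1$; multiplying by $t^{-\sigma/2}$ leaves $t^{\frac12(1+\sigma-\frac nr)}\le T^{\frac12(1+\sigma-\frac nr)}$, the exponent being $\ge0$ precisely by $-\sigma+\tfrac nr\le1$. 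Estimate \eqref{regularity-bilinear-estimate} is the identical computation with the cruder pointwise bound ($\|\nabla\mathcal{T}(\tau)\|_{L^1}\lesssim_n\tau^{-1/2}$ and $\|w(s)\|_{L^{p,q}}\le s^{\sigma/2}\|w\|_{\mathcal{K}_{p,q}^{\sigma}(T)}$) and the integral $\int_0^1(1-s)^{-1/2}s^{\sigma/2}\,\mathrm{d}s$. For \eqref{uniqueness-bilinear-estimate} there is one more layer: $\|A[w]\|_{\mathcal{L}_{\overline{r},\infty}^{\alpha,\beta}(T)}$ is the $L^{\alpha,\beta}(\mathbb{R})$-norm of $t\mapsto\mathbf{1}_{(0,T)}(t)\|A[w](t)\|_{L^{\overline{r},\infty}}$, which by the pointwise bound and Minkowski is dominated by the same norm of $\mathbf{1}_{(0,T)}\cdot(K_\gamma*\phi)$, where $\gamma=\tfrac12(1+\tfrac nr)$, $K_\gamma(u)=u^{-\gamma}\mathbf{1}_{(0,\infty)}(u)\in L^{1/\gamma,\infty}(\mathbb{R})$, and $\phi=\mathbf{1}_{(0,T)}\|w(\cdot)\|_{L^{r/2,\infty}}^{*}$ satisfies $\|\phi\|_{L^{\alpha/2,1\vee\beta/2}(\mathbb{R})}^{*}=\|w\|_{\mathcal{L}_{r/2,\infty}^{\alpha/2,1\vee\beta/2}(T)}^{*}$. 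Young's (O'Neil's) inequality for Lorentz spaces on $\mathbb{R}$, in the time variable, puts $K_\gamma*\phi$ into $L^{c,\cdot}(\mathbb{R})$ with $\tfrac1c=\gamma+\tfrac2\alpha-1$; the hypothesis $\tfrac2\alpha+\tfrac nr\le1$ is exactly $\gamma\le\tfrac1{\alpha'}$, i.e.\ $\alpha\le c$, so since $\mathbf{1}_{(0,T)}(K_\gamma*\phi)$ is supported in $(0,T)$ the local embedding \eqref{basic-relation-3} produces the factor $T^{\frac1\alpha-\frac1c}=T^{\frac12(1-\frac2\alpha-\frac nr)}$, completing \eqref{uniqueness-bilinear-estimate}; the exponent $1\vee\tfrac\beta2$ in the hypothesis is what allows the second-index arithmetic to close, since $1\vee\tfrac\beta2\le\beta$.

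For the semigroup identities, split
\[
\tau_{t_0}A[w](t)=A[w](t+t_0)=\int_0^{t_0}\nabla\mathcal{T}(t+t_0-s)*w(s)\,\mathrm{d}s+\int_{t_0}^{t+t_0}\nabla\mathcal{T}(t+t_0-s)*w(s)\,\mathrm{d}s.
\]
The substitution $s\mapsto s+t_0$ turns the second integral into $A[\tau_{t_0}w](t)$. For the first, the kernel semigroup property \eqref{heat-type-semigroup} (with $\beta=0$) gives $\Phi(t)*\nabla_k\mathcal{T}_{ij}(t_0-s)=\nabla_k\mathcal{T}_{ij}(t+t_0-s)$, hence $\nabla\mathcal{T}(t+t_0-s)*w(s)=\Phi(t)*\big(\nabla\mathcal{T}(t_0-s)*w(s)\big)$; interchanging $\Phi(t)*$ with $\int_0^{t_0}\mathrm{d}s$ — legitimate by Fubini and Minkowski, thanks to the finiteness estimates above — identifies the first integral with $\Phi(t)*A[w](t_0)=S[A[w](t_0)](t)$. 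This gives \eqref{uniqueness-semigroup} and \eqref{existence-semigroup}: in case (i) both sides make sense only for a.e.\ $t_0$ (those with $A[w](t_0)\in L^{\overline{r},\infty}$) and a.e.\ $t$, hence the $=_{\overline{r}}^{\mathrm{a.e.}}$ relation; in case (ii), $A[w](t_0)\in L^{\overline{r},\infty}$ for every $t_0$ and $A[w](t)$ is given, for every $t$, by an everywhere-convergent integral (if $r=\infty$) or a well-defined $L^{\overline{r},\infty}$-class (if $r<\infty$), so the identity holds for all $t_0$, hence $=_{\overline{r}}^{\mathrm{e.}}$.

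The chief difficulty is bookkeeping rather than a single hard step: choosing the correct Lorentz exponents in both the spatial and the temporal convolution inequalities — the appearances of $1\vee\tfrac\beta2$ and of the explicit constant ${(\tfrac r2)}'$ — and checking that the pointwise (not merely norm-level) bounds hold for a.e.\ or every $(s,t,x)$, so that each Fubini/Minkowski interchange and each application of Minkowski's inequality for Lorentz spaces is justified. The one genuinely essential ingredient is the subcritical range $r>n$: it makes $\gamma=\tfrac12(1+\tfrac nr)<1$, so that the kernel singularity at $s=t$ is integrable and the Beta integrals converge, and, together with $\tfrac2\alpha+\tfrac nr\le1$, it makes $\gamma\le\tfrac1{\alpha'}$, permitting one to trade integrability on the bounded interval $(0,T)$ for a nonnegative power of $T$; both break down at the critical exponent $r=n$.
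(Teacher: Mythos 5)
Your overall architecture coincides with the paper's: the spatial bound $\||\nabla\mathcal{T}(t-s)|*|w(s)|\|_{L^{\overline{r},\infty}}\lesssim_{n}(\tfrac r2)'(t-s)^{-\frac12(1+\frac nr)}\|w(s)\|^{*}_{L^{r/2,\infty}}$, Minkowski in the time integral, the Beta-function computation for \eqref{existence-bilinear-estimate} and \eqref{regularity-bilinear-estimate}, and the split-at-$t_{0}$ plus kernel semigroup property \eqref{heat-type-semigroup} plus Fubini for \eqref{uniqueness-semigroup} and \eqref{existence-semigroup}. But your proof of \eqref{uniqueness-bilinear-estimate} contains a step that fails on an open set of admissible parameters: you convolve the \emph{untruncated} kernel $K_{\gamma}(u)=u^{-\gamma}\mathbf{1}_{(0,\infty)}(u)$, $\gamma=\tfrac12(1+\tfrac nr)$, with $\phi$ globally on $\mathbb{R}$ and claim $K_{\gamma}*\phi\in L^{c,\cdot}(\mathbb{R})$ with $\tfrac1c=\gamma+\tfrac2\alpha-1$, restricting to $(0,T)$ only afterwards. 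The hypothesis $\tfrac2\alpha+\tfrac nr\leq1$ gives $\tfrac1c\leq\tfrac1\alpha$ but does \emph{not} give $\tfrac1c>0$: whenever $\tfrac4\alpha+\tfrac nr\leq1$ (e.g.\ $r=\infty$ and $\alpha>4$, a perfectly relevant case for uniqueness in $\mathcal{L}^{\alpha,\beta}_{\infty,\infty}$) one has $\gamma+\tfrac2\alpha\leq1$, so there is no Lorentz exponent $c$ for which the O'Neil inequality applies, and indeed $K_{\gamma}*\phi$ need not lie in any $L^{c,\cdot}(\mathbb{R})$ since $K_{\gamma}$ only decays like $u^{-\gamma}$ at infinity. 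The repair is to reverse your order of operations, which is exactly what the paper does: since only $t-s\in(0,T)$ matters, replace $K_{\gamma}$ by $f:=\mathbf{1}_{(0,T)}K_{\gamma}$, use the local embedding \eqref{local-embedding} on the \emph{kernel} to get $\|f\|^{*}_{L^{\alpha',\infty}(\mathbb{R})}\lesssim T^{\frac1{\alpha'}-\gamma}=T^{\frac12(1-\frac2\alpha-\frac nr)}$ (valid because $\gamma\leq\tfrac1{\alpha'}$), and then apply the temporal convolution inequality $L^{\alpha',\infty}*L^{\alpha/2,1\vee\beta/2}\rightarrow L^{\alpha,\beta}$, which is always in range since $\tfrac1{\alpha'}+\tfrac2\alpha=1+\tfrac1\alpha>1$.

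A secondary point: for the spatial estimate with $r<\infty$ you invoke a black-box O'Neil inequality $L^{r',1}*L^{r/2,\infty}\rightarrow L^{r,\infty}$, whose constant depends on $r$ in an uncontrolled way, while \eqref{existence-bilinear-estimate} isolates the $r$-dependence in the explicit factor $(\tfrac r2)'$ (which feeds into the explicit blowup constant $\eta_{n,r}$ later). To actually obtain that constant you need the paper's route: prove the two endpoint bounds $\||\nabla\mathcal{T}(t-s)|*|w(s)|\|_{L^{r/2,\infty}}\lesssim_{n}(\tfrac r2)'(t-s)^{-\frac12}\|w(s)\|^{*}$ (Young with $\|\nabla\mathcal{T}\|_{L^{1}}$, the factor $(\tfrac r2)'$ coming from passing between the Lorentz norm and quasinorm) and $\||\nabla\mathcal{T}(t-s)|*|w(s)|\|_{L^{\overline{\infty}}}\leq(\tfrac r2)'\|\nabla\mathcal{T}(t-s)\|^{*}_{L^{(r/2)',1}}\|w(s)\|^{*}_{L^{r/2,\infty}}$ (the duality pairing you mention, with the kernel bound \eqref{heat-type-lorentz}), and then interpolate via $\|g\|_{L^{r,\infty}}\leq\|g\|^{1/2}_{L^{r/2,\infty}}\|g\|^{1/2}_{L^{\infty,\infty}}$, which preserves the factor $(\tfrac r2)'$ and also yields the $L^{\overline{r},\infty}$ (genuine supremum) refinement used for the pointwise definedness claims when $r=\infty$. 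With these two adjustments your argument becomes the paper's proof.
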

\begin{proof}
    We have
    \begin{equation}
    \begin{aligned}
        {\||\nabla\mathcal{T}(t-s)|*|w(s)|\|}_{L^{\frac{r}{2},\infty}(\mathbb{R}^{n})} &\leq {\|\nabla\mathcal{T}(t-s)\|}_{L^{1}(\mathbb{R}^{n})}{\|w(s)\|}_{L^{\frac{r}{2},\infty}(\mathbb{R}^{n})} \\
        &\lesssim_{n} {\left(\frac{r}{2}\right)}'{(t-s)}^{-\frac{1}{2}}{\|w(s)\|}_{L^{\frac{r}{2},\infty}(\mathbb{R}^{n})}^{*}
    \end{aligned}
    \end{equation}
    and
    \begin{equation}
    \begin{aligned}
        {\||\nabla\mathcal{T}(t-s)|*|w(s)|\|}_{L^{\overline{\infty}}(\mathbb{R}^{n})} &\leq {\left(\frac{r}{2}\right)}'{\|\nabla\mathcal{T}(t-s)\|}_{L^{{\left(\frac{r}{2}\right)}',1}(\mathbb{R}^{n})}^{*}{\|w(s)\|}_{L^{\frac{r}{2},\infty}(\mathbb{R}^{n})}^{*} \\
        &\lesssim_{n} {\left(\frac{r}{2}\right)}'{(t-s)}^{-\frac{1}{2}\left(1+\frac{2n}{r}\right)}{\|w(s)\|}_{L^{\frac{r}{2},\infty}(\mathbb{R}^{n})}^{*}.
    \end{aligned}
    \end{equation}
    Using the interpolation inequality ${\|g\|}_{L^{r,\infty}(\mathbb{R}^{n})}\leq{\|g\|}_{L^{\frac{r}{2},\infty}(\mathbb{R}^{n})}^{1/2}{\|g\|}_{L^{\infty,\infty}(\mathbb{R}^{n})}^{1/2}$, we deduce that
    \begin{equation}
        {\||\nabla\mathcal{T}(t-s)|*|w(s)|\|}_{L^{\overline{r},\infty}(\mathbb{R}^{n})} \lesssim_{n} {\left(\frac{r}{2}\right)}'{(t-s)}^{-\frac{1}{2}\left(1+\frac{n}{r}\right)}{\|w(s)\|}_{L^{\frac{r}{2},\infty}(\mathbb{R}^{n})}^{*},
    \end{equation}
    so by Minkowski's inequality we have
    \begin{equation}\label{bilinear-estimate-proof}
        {\|A[w](t)\|}_{L^{\overline{r},\infty}(\mathbb{R}^{n})} \lesssim_{n} {\left(\frac{r}{2}\right)}'\int_{0}^{t}{(t-s)}^{-\frac{1}{2}\left(1+\frac{n}{r}\right)}{\|w(s)\|}_{L^{\frac{r}{2},\infty}(\mathbb{R}^{n})}^{*}\,\mathrm{d}s.
    \end{equation}
    To prove \eqref{uniqueness-bilinear-estimate}, we observe that the right hand side of \eqref{bilinear-estimate-proof} is the convolution of $f(t):=\mathbf{1}_{(0,T)}(t)t^{-\frac{1}{2}\left(1+\frac{n}{r}\right)}$ and $g(t):=\mathbf{1}_{(0,T)}(t){\|w(t)\|}_{L^{\frac{r}{2},\infty}(\mathbb{R}^{n})}^{*}$, where by \eqref{local-embedding} we have
    \begin{equation}
        {\|f\|}_{L^{\alpha',\infty}(\mathbb{R})}^{*} \lesssim_{n,r,\alpha} T^{\frac{1}{\alpha'}-\frac{r+n}{2r}}{\|f\|}_{L^{\frac{2r}{r+n},\infty}(\mathbb{R})}^{*} = T^{\frac{1}{2}\left(1-\frac{2}{\alpha}-\frac{n}{r}\right)},
    \end{equation}
    so \eqref{uniqueness-bilinear-estimate} follows from the convolution inequality ${\|f*g\|}_{L^{\alpha,\beta}(\mathbb{R})}^{*}\lesssim_{\alpha,\beta}{\|f\|}_{L^{\alpha',\infty}(\mathbb{R})}^{*}{\|g\|}_{L^{\frac{\alpha}{2},1\vee\frac{\beta}{2}}(\mathbb{R})}^{*}$. On the other hand, estimate \eqref{existence-bilinear-estimate} follows from the calculation
    \begin{equation}
    \begin{aligned}
        t^{-\frac{\sigma}{2}}\int_{0}^{t}{(t-s)}^{-\frac{1}{2}\left(1+\frac{n}{r}\right)}{\|w(s)\|}_{L^{\frac{r}{2},\infty}(\mathbb{R}^{n})}^{*}\,\mathrm{d}s &\leq t^{-\frac{\sigma}{2}}{\|w\|}_{\mathcal{K}_{\frac{r}{2},\infty}^{2\sigma}(T)}^{*} \int_{0}^{t}{(t-s)}^{-\frac{1}{2}\left(1+\frac{n}{r}\right)}s^{\sigma}\,\mathrm{d}s \\
        &= t^{\frac{1}{2}\left(1+\sigma-\frac{n}{r}\right)}{\|w\|}_{\mathcal{K}_{\frac{r}{2},\infty}^{2\sigma}(T)}^{*}\int_{0}^{1}{(1-s)}^{-\frac{1}{2}\left(1+\frac{n}{r}\right)}s^{\sigma}\,\mathrm{d}s \\
        &\leq T^{\frac{1}{2}\left(1+\sigma-\frac{n}{r}\right)}{\|w\|}_{\mathcal{K}_{\frac{r}{2},\infty}^{2\sigma}(T)}^{*}\int_{0}^{1}{(1-s)}^{-\frac{1}{2}\left(1+\frac{n}{r}\right)}s^{\sigma}\,\mathrm{d}s.
    \end{aligned}
    \end{equation}
    Again using Minkowski's inequality, estimate \eqref{regularity-bilinear-estimate} follows from the calculation
    \begin{equation}
    \begin{aligned}
        {\|A[w](t)\|}_{L^{p,q}(\mathbb{R}^{n})} &\leq \int_{0}^{t}{\||\nabla\mathcal{T}(t-s)|*|w(s)|\|}_{L^{p,q}(\mathbb{R}^{n})}\,\mathrm{d}s \\
        &\leq \int_{0}^{t}{\|\nabla\mathcal{T}(t-s)\|}_{L^{1}(\mathbb{R}^{n})}{\|w(s)\|}_{L^{p,q}(\mathbb{R}^{n})}\,\mathrm{d}s \\
        &\lesssim_{n} \int_{0}^{t}{(t-s)}^{-\frac{1}{2}}{\|w(s)\|}_{L^{p,q}(\mathbb{R}^{n})}\,\mathrm{d}s \\
        &\leq {\|w\|}_{\mathcal{K}_{p,q}^{\sigma}(T)}\int_{0}^{t}{(t-s)}^{-\frac{1}{2}}s^{\frac{\sigma}{2}}\,\mathrm{d}s \\
        &= t^{\frac{1}{2}\left(1+\sigma\right)}{\|w\|}_{\mathcal{K}_{p,q}^{\sigma}(T)}\int_{0}^{1}{(1-s)}^{-\frac{1}{2}}s^{\frac{\sigma}{2}}\,\mathrm{d}s \\
        &\leq T^{\frac{1}{2}\left(1+\sigma\right)}{\|w\|}_{\mathcal{K}_{p,q}^{\sigma}(T)}\int_{0}^{1}{(1-s)}^{-\frac{1}{2}}s^{\frac{\sigma}{2}}\,\mathrm{d}s.
    \end{aligned}
    \end{equation}
    Finally, identities \eqref{uniqueness-semigroup} and \eqref{existence-semigroup} are based upon the calculation
    \begin{equation}
    \begin{aligned}
        \tau_{t_{0}}A_{i}[w](t,x) &= \int_{0}^{t_{0}+t}\int_{\mathbb{R}^{n}}\nabla_{k}\mathcal{T}_{ij}(t+t_{0}-s,y)w_{jk}(s,x-y)\,\mathrm{d}y\,\mathrm{d}s \\
        &= \int_{0}^{t_{0}}\int_{\mathbb{R}^{n}}\left(\int_{\mathbb{R}^{n}}\Phi(t,z)\nabla_{k}\mathcal{T}_{ij}(t_{0}-s,y-z)\,\mathrm{d}z\right)w_{jk}(s,x-y)\,\mathrm{d}y\,\mathrm{d}s \\
        &\qquad+ \int_{t_{0}}^{t_{0}+t}\int_{\mathbb{R}^{n}}\nabla_{k}\mathcal{T}_{ij}(t+t_{0}-s,y)w_{jk}(s,x-y)\,\mathrm{d}y\,\mathrm{d}s \\
        &= \int_{\mathbb{R}^{n}}\Phi(t,z)\left(\int_{0}^{t_{0}}\int_{\mathbb{R}^{n}}\nabla_{k}\mathcal{T}_{ij}(t_{0}-s,y-z)w_{jk}(s,x-y)\,\mathrm{d}y\,\mathrm{d}s\right)\,\mathrm{d}z \\
        &\qquad+ \int_{0}^{t}\int_{\mathbb{R}^{n}}\nabla_{k}\mathcal{T}_{ij}(t-\tilde{s},y)w_{jk}(\tilde{s}+t_{0},x-y)\,\mathrm{d}y\,\mathrm{d}\tilde{s} \\
        &= \int_{\mathbb{R}^{n}}\Phi(t,z)A_{i}[w](t_{0},x-z)\,\mathrm{d}z \\
        &\qquad + \int_{0}^{t}\int_{\mathbb{R}^{n}}\nabla_{k}\mathcal{T}_{ij}(t-\tilde{s},y)\tau_{t_{0}}w_{jk}(\tilde{s},x-y)\,\mathrm{d}y\,\mathrm{d}\tilde{s} \\
        &= S[A[w](t_{0})](t,x) + A[\tau_{t_{0}}w](t,x),
    \end{aligned}
    \end{equation}
    where the use of Fubini's theorem in the third line can be justified using our estimates for the operators $S$ and $A$.
\end{proof}
\section{Uniqueness}
By \eqref{product-estimate-1} and \eqref{uniqueness-bilinear-estimate}, if $r\in(n,\overline{\infty}]$, $\alpha\in(2,\infty)$, $\beta\in[1,\infty)$ and $\frac{2}{\alpha}+\frac{n}{r}\leq1$, then there exists a constant $C=C_{n,r,\alpha,\beta}$ such that
\begin{equation}\label{uni-bilinear}
    {\|B[u,v]\|}_{\mathcal{L}_{r,\infty}^{\alpha,\beta}(T)} \leq CT^{\frac{1}{2}\left(1-\frac{2}{\alpha}-\frac{n}{r}\right)}{\|u\|}_{\mathcal{L}_{r,\infty}^{\alpha,\beta}(T)}{\|v\|}_{\mathcal{L}_{r,\infty}^{\alpha,\beta}(T)} \quad \text{for all }T\in(0,\infty)\text{ and }u,v\in\mathcal{L}_{r,\infty}^{\alpha,\beta}(T).
\end{equation}
\begin{lemma}
    Assume that $r\in(n,\overline{\infty}]$, $\alpha\in(2,\infty)$, $\beta\in[1,\infty)$ and $\frac{2}{\alpha}+\frac{n}{r}\leq1$. Let $C=C_{n,r,\alpha,\beta}$ and $T\in(0,\infty)$. If $u^{0}\in\mathcal{L}_{r,\infty}^{\alpha,\beta}(T)$ satisfies $4CT^{\frac{1}{2}\left(1-\frac{2}{\alpha}-\frac{n}{r}\right)}{\|u^{0}\|}_{\mathcal{L}_{r,\infty}^{\alpha,\beta}(T)}<1$, then any solution $u\in\mathcal{L}_{r,\infty}^{\alpha,\beta}(T)$ to the equation $u=_{r}^{\mathrm{a.e.}}u^{0}-B[u,u]$ satisfies $CT^{\frac{1}{2}\left(1-\frac{2}{\alpha}-\frac{n}{r}\right)}{\|u\|}_{\mathcal{L}_{r,\infty}^{\alpha,\beta}(T)}\leq\Lambda$, where $\Lambda\in[0,\frac{1}{2})$ is the smaller root of the quadratic
    \begin{equation}\label{uni-Lambda}
        \Lambda = CT^{\frac{1}{2}\left(1-\frac{2}{\alpha}-\frac{n}{r}\right)}{\|u^{0}\|}_{\mathcal{L}_{r,\infty}^{\alpha,\beta}(T)} + \Lambda^{2}.
    \end{equation}
\end{lemma}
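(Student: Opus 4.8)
The plan is to combine the bilinear estimate \eqref{uni-bilinear} with a continuity-in-time argument: \eqref{uni-bilinear} produces a quadratic inequality that only pins the normalised norm of $u$ to one of the two branches cut out by \eqref{uni-Lambda}, and continuity (via connectedness of the time interval) excludes the large branch. Write $N:=CT^{\frac{1}{2}\left(1-\frac{2}{\alpha}-\frac{n}{r}\right)}{\|u^{0}\|}_{\mathcal{L}_{r,\infty}^{\alpha,\beta}(T)}\in[0,\tfrac14)$, and for $T'\in(0,T]$ put $\mathcal{N}(T'):=C{(T')}^{\frac{1}{2}\left(1-\frac{2}{\alpha}-\frac{n}{r}\right)}{\|u\|}_{\mathcal{L}_{r,\infty}^{\alpha,\beta}(T')}$, which is finite since restriction to a shorter interval does not increase the norm. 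I would first note that the bilinear operator is causal --- $B[u,u](t,x)$ involves only the values of $u$ on $(0,t)\times\mathbb{R}^{n}$ --- so for each $T'\in(0,T]$ the restriction $u|_{(0,T')}\in\mathcal{L}_{r,\infty}^{\alpha,\beta}(T')$ still solves $u=_{r}^{\mathrm{a.e.}}u^{0}-B[u,u]$ on $(0,T')$. Applying \eqref{uni-bilinear} on $(0,T')$, using that $1-\tfrac{2}{\alpha}-\tfrac{n}{r}\ge0$ gives ${(T')}^{\frac{1}{2}(1-\frac{2}{\alpha}-\frac{n}{r})}{\|u^{0}\|}_{\mathcal{L}_{r,\infty}^{\alpha,\beta}(T')}\le T^{\frac{1}{2}(1-\frac{2}{\alpha}-\frac{n}{r})}{\|u^{0}\|}_{\mathcal{L}_{r,\infty}^{\alpha,\beta}(T)}$, and multiplying through by $C{(T')}^{\frac{1}{2}(1-\frac{2}{\alpha}-\frac{n}{r})}$, I obtain $\mathcal{N}(T')\le N+\mathcal{N}(T')^{2}$ for every $T'\in(0,T]$; equivalently $\mathcal{N}(T')^{2}-\mathcal{N}(T')+N\ge0$, so $\mathcal{N}(T')\in[0,\Lambda]\cup[\Lambda_{+},\infty)$, where $\Lambda=\tfrac12(1-\sqrt{1-4N})$ is the smaller root of \eqref{uni-Lambda} and $\Lambda_{+}=\tfrac12(1+\sqrt{1-4N})$ the larger. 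Since the hypothesis $4CT^{\frac{1}{2}(1-\frac{2}{\alpha}-\frac{n}{r})}{\|u^{0}\|}_{\mathcal{L}_{r,\infty}^{\alpha,\beta}(T)}<1$ gives $N<\tfrac14$ strictly, we have $\Lambda<\Lambda_{+}$, so these two sets are disjoint.

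Next I would establish that $T'\mapsto\mathcal{N}(T')$ is continuous on $(0,T]$ and tends to $0$ as $T'\to0^{+}$. Writing $g(t):=\mathbf{1}_{(0,T)}(t){\|u(t)\|}_{L^{r,\infty}(\mathbb{R}^{n})}$, which lies in $L^{\alpha,\beta}(\mathbb{R})$, we have ${\|u\|}_{\mathcal{L}_{r,\infty}^{\alpha,\beta}(T')}={\|g\,\mathbf{1}_{(0,T')}\|}_{L^{\alpha,\beta}(\mathbb{R})}$. For $0\le T''\le T'\le T$ the triangle inequality (valid since $\alpha>1$, so ${\|\cdot\|}_{L^{\alpha,\beta}(\mathbb{R})}$ is a genuine norm) together with ${\|\cdot\|}_{L^{\alpha,\beta}(\mathbb{R})}\le\alpha'{\|\cdot\|}_{L^{\alpha,\beta}(\mathbb{R})}^{*}$ reduce the claim to the absolute continuity of the Lorentz quasinorm, namely ${\|g\,\mathbf{1}_{E}\|}_{L^{\alpha,\beta}(\mathbb{R})}^{*}\to0$ as $|E|\to0$; this holds precisely because $\beta<\infty$ (the function $g\,\mathbf{1}_{E}$ is supported on a set of measure $|E|$, so its quasinorm is controlled by the tail $\int_{0}^{|E|}{(t^{1/\alpha}g^{*}(t))}^{\beta}\,\tfrac{\mathrm{d}t}{t}$ of a convergent integral). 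Hence ${\|u\|}_{\mathcal{L}_{r,\infty}^{\alpha,\beta}(T')}$ is continuous in $T'$ and vanishes as $T'\to0^{+}$, and since ${(T')}^{\frac{1}{2}(1-\frac{2}{\alpha}-\frac{n}{r})}$ is continuous and bounded on $(0,T]$, so is $\mathcal{N}$, with $\mathcal{N}(T')\to0$ as $T'\to0^{+}$.

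Finally, since $(0,T]$ is connected and $\mathcal{N}$ continuous, $\mathcal{N}((0,T])$ is a connected subset of the disjoint union $[0,\Lambda]\cup[\Lambda_{+},\infty)$; as it contains values arbitrarily close to $0$, it must lie entirely in $[0,\Lambda]$. Taking $T'=T$ yields $CT^{\frac{1}{2}(1-\frac{2}{\alpha}-\frac{n}{r})}{\|u\|}_{\mathcal{L}_{r,\infty}^{\alpha,\beta}(T)}=\mathcal{N}(T)\le\Lambda$, which is the assertion. The only point needing genuine care is the continuity and vanishing of $T'\mapsto{\|u\|}_{\mathcal{L}_{r,\infty}^{\alpha,\beta}(T')}$ --- i.e.\ the absolute continuity of ${\|\cdot\|}_{L^{\alpha,\beta}(\mathbb{R})}^{*}$ --- which is exactly where the hypothesis $\beta\in[1,\infty)$ is used; everything else is bookkeeping with the quadratic inequality and the causality of $B$.
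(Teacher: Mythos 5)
Your proof is correct and follows essentially the same route as the paper: apply the bilinear estimate \eqref{uni-bilinear} on each truncated interval $(0,T')$ to get the quadratic inequality $\mathcal{N}(T')\leq N+\mathcal{N}(T')^{2}$, then use continuity of $T'\mapsto\mathcal{N}(T')$ together with $\mathcal{N}(T')\to0$ as $T'\to0$ (the paper invokes dominated convergence in $L^{\alpha,\beta}(\mathbb{R})$, which is exactly the absolute-continuity property you spell out, and indeed where $\beta<\infty$ is needed) and connectedness to conclude $\mathcal{N}(T)\leq\Lambda$. Your extra remarks on causality of $B$ and the monotonicity in $T'$ of the normalised data term are just explicit versions of steps the paper leaves implicit.
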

\begin{proof}
    Define
    \begin{equation}
        g_{0} := CT^{\frac{1}{2}\left(1-\frac{2}{\alpha}-\frac{n}{r}\right)}{\|u^{0}\|}_{\mathcal{L}_{r,\infty}^{\alpha,\beta}(T)}, \quad \text{and} \quad g(t) := Ct^{\frac{1}{2}\left(1-\frac{2}{\alpha}-\frac{n}{r}\right)}{\|u\|}_{\mathcal{L}_{r,\infty}^{\alpha,\beta}(t)} \text{ for }t\in(0,T],
    \end{equation}
    and let $\Gamma$ be the larger root of the quadratic $\Gamma=g_{0}+\Gamma^{2}$. Since $u=_{r}^{\mathrm{a.e.}}u^{0}-B[u,u]$, the bilinear estimate \eqref{uni-bilinear} yields $g(t)\leq g_{0}+{g(t)}^{2}$, so $g$ takes values in $[0,\Lambda]\cup[\Gamma,\infty)$. By dominated convergence in $L^{\alpha,\beta}(\mathbb{R})$, the function $g$ is continuous on $(0,T]$, satisfying $g(t)\rightarrow0$ as $t\rightarrow0$. Therefore $g(t)\leq\Lambda$ for all $t\in(0,T]$.
\end{proof}
\begin{lemma}
    Assume that $r\in(n,\overline{\infty}]$, $\alpha\in(2,\infty)$, $\beta\in[1,\infty)$ and $\frac{2}{\alpha}+\frac{n}{r}\leq1$. Let $C=C_{n,r,\alpha,\beta}$ and $T\in(0,\infty)$. If $u^{0}\in\mathcal{L}_{r,\infty}^{\alpha,\beta}(T)$ satisfies $4CT^{\frac{1}{2}\left(1-\frac{2}{\alpha}-\frac{n}{r}\right)}{\|u^{0}\|}_{\mathcal{L}_{r,\infty}^{\alpha,\beta}(T)}<1$, and $u,v\in\mathcal{L}_{r,\infty}^{\alpha,\beta}(T)$ satisfy $u^{0}=_{r}^{\mathrm{a.e.}}u+B[u,u]=_{r}^{\mathrm{a.e.}}v+B[v,v]$, then $u=_{r}^{\mathrm{a.e.}}v$.
\end{lemma}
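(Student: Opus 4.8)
The plan is to subtract the two fixed-point equations and exploit the bilinearity of $B$ to obtain a self-improving inequality for $w := u - v$. Since $B_{i}[\cdot,\cdot] = A_{i}[\cdot\otimes\cdot]$ and $A$ is linear in its argument, one has the algebraic identity
\[
    B[u,u] - B[v,v] = A[u\otimes u - v\otimes v] = A[u\otimes w] + A[w\otimes v] = B[u,w] + B[w,v],
\]
which holds in the sense of $=_{r}^{\mathrm{a.e.}}$: indeed $u\otimes w$, $w\otimes v$ and $u\otimes u - v\otimes v$ all lie in $\mathcal{L}_{\frac{r}{2},\infty}^{\frac{\alpha}{2},1\vee\frac{\beta}{2}}(T)$ by \eqref{product-estimate-1}, so each of the relevant instances of $A$ is defined almost everywhere by Lemma \ref{semigrouplem}(i). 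Combining this with $u =_{r}^{\mathrm{a.e.}} u^{0} - B[u,u]$ and $v =_{r}^{\mathrm{a.e.}} u^{0} - B[v,v]$ gives
\[
    w =_{r}^{\mathrm{a.e.}} -B[u,w] - B[w,v].
\]

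Next I would feed this into the bilinear estimate. Note that $w \in \mathcal{L}_{r,\infty}^{\alpha,\beta}(T)$ since this is a genuine normed space containing both $u$ and $v$; in particular ${\|w\|}_{\mathcal{L}_{r,\infty}^{\alpha,\beta}(T)} < \infty$. Applying \eqref{uni-bilinear} to each term on the right and using the triangle inequality,
\[
    {\|w\|}_{\mathcal{L}_{r,\infty}^{\alpha,\beta}(T)} \le C T^{\frac{1}{2}\left(1-\frac{2}{\alpha}-\frac{n}{r}\right)}\Big({\|u\|}_{\mathcal{L}_{r,\infty}^{\alpha,\beta}(T)} + {\|v\|}_{\mathcal{L}_{r,\infty}^{\alpha,\beta}(T)}\Big){\|w\|}_{\mathcal{L}_{r,\infty}^{\alpha,\beta}(T)}.
\]
The preceding lemma applies to both $u$ and $v$, each being a solution of $u^{0} =_{r}^{\mathrm{a.e.}} \cdot + B[\cdot,\cdot]$ under the same smallness hypothesis, so $C T^{\frac{1}{2}\left(1-\frac{2}{\alpha}-\frac{n}{r}\right)}{\|u\|}_{\mathcal{L}_{r,\infty}^{\alpha,\beta}(T)} \le \Lambda$ and likewise for $v$, where $\Lambda \in [0,\tfrac{1}{2})$ is the smaller root of \eqref{uni-Lambda}. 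Hence
\[
    {\|w\|}_{\mathcal{L}_{r,\infty}^{\alpha,\beta}(T)} \le 2\Lambda\,{\|w\|}_{\mathcal{L}_{r,\infty}^{\alpha,\beta}(T)},
\]
and since $2\Lambda < 1$ while ${\|w\|}_{\mathcal{L}_{r,\infty}^{\alpha,\beta}(T)}$ is finite, we conclude ${\|w\|}_{\mathcal{L}_{r,\infty}^{\alpha,\beta}(T)} = 0$, that is, $u =_{r}^{\mathrm{a.e.}} v$.

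There is no serious obstacle: the argument is a clean contraction, and the only points requiring a little care are (i) verifying that the three arguments of $A$ lie in the appropriate path space so the bilinear identity holds in the correct equivalence class, (ii) working with the \emph{norm} form $\mathcal{L}_{r,\infty}^{\alpha,\beta}(T)$ rather than the quasinorm, so that the triangle inequality is available with constant $1$, consistently with the statements of \eqref{uni-bilinear} and of the preceding lemma, and (iii) recording the a priori finiteness of ${\|w\|}_{\mathcal{L}_{r,\infty}^{\alpha,\beta}(T)}$, which is precisely what lets the inequality $x \le 2\Lambda x$ with $2\Lambda < 1$ force $x = 0$. Unlike in the preceding lemma, no continuity-in-time or bootstrapping step is needed here, since the smallness is assumed directly on all of $(0,T)$.
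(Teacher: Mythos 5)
Your proposal is correct and follows essentially the same route as the paper: subtract the two equations, use the bilinear decomposition $B[u,u]-B[v,v]=B[u,u-v]+B[u-v,v]$, apply the bilinear estimate \eqref{uni-bilinear} together with the a priori bound $\Lambda\in[0,\tfrac12)$ from the preceding lemma, and conclude from $2\Lambda<1$ and finiteness of the norm that $u=_{r}^{\mathrm{a.e.}}v$. The additional care you take about the equivalence classes and the finiteness of ${\|u-v\|}_{\mathcal{L}_{r,\infty}^{\alpha,\beta}(T)}$ is consistent with, and slightly more explicit than, the paper's argument.
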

\begin{proof}
    Let $u,v$ be as described. Then
    \begin{equation}
        u-v=_{r}^{\mathrm{a.e.}}-B[u,u]+B[v,v]=_{r}^{\mathrm{a.e.}}-B[u,u-v]-B[u-v,v].
    \end{equation}
    By the bilinear estimate \eqref{uni-bilinear} and the previous lemma, we deduce that
    \begin{equation}
    \begin{aligned}
        {\|u-v\|}_{\mathcal{L}_{r,\infty}^{\alpha,\beta}(T)} &\leq CT^{\frac{1}{2}\left(1-\frac{2}{\alpha}-\frac{n}{r}\right)}\left({\|u\|}_{\mathcal{L}_{r,\infty}^{\alpha,\beta}(T)}+{\|v\|}_{\mathcal{L}_{r,\infty}^{\alpha,\beta}(T)}\right){\|u-v\|}_{\mathcal{L}_{r,\infty}^{\alpha,\beta}(T)} \\
        &\leq 2\Lambda{\|u-v\|}_{\mathcal{L}_{r,\infty}^{\alpha,\beta}(T)},
    \end{aligned}
    \end{equation}
    where $\Lambda\in[0,\frac{1}{2})$ is the smaller root of the quadratic \eqref{uni-Lambda}. Therefore ${\|u-v\|}_{\mathcal{L}_{r,\infty}^{\alpha,\beta}(T)}=0$, so $u=_{r}^{\mathrm{a.e.}}v$.
\end{proof}
\begin{theorem}\label{uniqthm} (Uniqueness).
    Assume that $p\in(1,\infty]$, $r\in(n,\overline{\infty}]$, $\alpha\in(2,\infty)$, $\beta\in[1,\infty)$, and $\frac{2}{\alpha}+\frac{n}{r}\leq1$. If $T\in(0,\infty]$, $f\in L^{p,\infty}(\mathbb{R}^{n})$, and $u,v\in\mathcal{L}_{r,\infty}^{\alpha,\beta}(T_{-})$ satisfy $S[f] =_{r}^{\mathrm{a.e.}} u+B[u,u] =_{r}^{\mathrm{a.e.}} v+B[v,v]$, then $u =_{r}^{\mathrm{a.e.}} v$.
\end{theorem}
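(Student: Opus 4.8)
The plan is to upgrade the small-data uniqueness of the two preceding lemmas to arbitrary $f\in L^{p,\infty}(\mathbb{R}^{n})$ and arbitrary $T$, exploiting that the \emph{difference} $w:=u-v$ of two solutions solves an equation in which the data $S[f]$ does not appear, and then propagating the agreement $u=_{r}^{\mathrm{a.e.}}v$ forward in time by a time-translation argument. The hypotheses on $p$ and $f$ are used only to ensure (since $p>1$) that $S[f]$ is a well-defined function so that the equation makes sense; they play no further role.

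First I would record the difference equation. Subtracting $S[f]=_{r}^{\mathrm{a.e.}}u+B[u,u]=_{r}^{\mathrm{a.e.}}v+B[v,v]$ and using the bilinearity identity $B[u,u]-B[v,v]=B[u,w]+B[w,v]$ gives $w=_{r}^{\mathrm{a.e.}}-B[u,w]-B[w,v]$ on $(0,T)$, where $w\in\mathcal{L}_{r,\infty}^{\alpha,\beta}(T_{-})$. From the bilinear estimate \eqref{uni-bilinear} I fix a constant $C=C_{n,r,\alpha,\beta}$ and the exponent $\theta:=\tfrac{1}{2}\!\left(1-\tfrac{2}{\alpha}-\tfrac{n}{r}\right)\geq0$ (nonnegative by hypothesis), so that $\|B[a,b]\|_{\mathcal{L}_{r,\infty}^{\alpha,\beta}(\tau)}\leq C\tau^{\theta}\|a\|_{\mathcal{L}_{r,\infty}^{\alpha,\beta}(\tau)}\|b\|_{\mathcal{L}_{r,\infty}^{\alpha,\beta}(\tau)}$ for every $\tau\in(0,\infty)$.

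Then I would introduce the maximal time of agreement $t^{*}:=\sup\{\tau\in[0,T]:u=_{r}^{\mathrm{a.e.}}v\text{ on }(0,\tau)\}$; the set contains $0$, and since agreement on $(0,\tau)$ for every $\tau<t^{*}$ forces agreement on $(0,t^{*})$, we get $u=_{r}v$, and hence $u\otimes u=v\otimes v$, a.e. on $(0,t^{*})\times\mathbb{R}^{n}$. The goal is $t^{*}=T$, so suppose for contradiction $t^{*}<T$. For $t'\in(0,T-t^{*})$ write $\tilde u(t'):=u(t^{*}+t')$, $\tilde v(t'):=v(t^{*}+t')$ and $\tilde w:=\tilde u-\tilde v\in\mathcal{L}_{r,\infty}^{\alpha,\beta}((T-t^{*})_{-})$. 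Splitting the time integral defining $B[u,u](t^{*}+t')$ as $\int_{0}^{t^{*}}+\int_{t^{*}}^{t^{*}+t'}$, the first piece is unchanged when $u$ is replaced by $v$ (the integrands $u\otimes u$ and $v\otimes v$ agree a.e. on $(0,t^{*})\times\mathbb{R}^{n}$), while the change of variables $s\mapsto s-t^{*}$ turns the second piece into $B[\tilde u,\tilde u](t')$; the same holds for $v$. Hence $(B[u,u]-B[v,v])(t^{*}+\cdot)=B[\tilde u,\tilde u]-B[\tilde v,\tilde v]=B[\tilde u,\tilde w]+B[\tilde w,\tilde v]$, and combining with the difference equation, $\tilde w=_{r}^{\mathrm{a.e.}}-B[\tilde u,\tilde w]-B[\tilde w,\tilde v]$ on $(0,T-t^{*})$.

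Finally, on a short interval $(0,\eta)$ with $\eta\in(0,T-t^{*})$, estimate \eqref{uni-bilinear} gives $\|\tilde w\|_{\mathcal{L}_{r,\infty}^{\alpha,\beta}(\eta)}\leq C\eta^{\theta}\!\left(\|\tilde u\|_{\mathcal{L}_{r,\infty}^{\alpha,\beta}(\eta)}+\|\tilde v\|_{\mathcal{L}_{r,\infty}^{\alpha,\beta}(\eta)}\right)\|\tilde w\|_{\mathcal{L}_{r,\infty}^{\alpha,\beta}(\eta)}$, the left-hand side being finite since $u,v\in\mathcal{L}_{r,\infty}^{\alpha,\beta}(t^{*}+\eta)$. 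Because $\beta<\infty$, dominated (indeed monotone) convergence in $L^{\alpha,\beta}(\mathbb{R})$ gives $\|\tilde u\|_{\mathcal{L}_{r,\infty}^{\alpha,\beta}(\eta)}\to0$ and $\|\tilde v\|_{\mathcal{L}_{r,\infty}^{\alpha,\beta}(\eta)}\to0$ as $\eta\to0^{+}$; choosing $\eta$ small enough that the prefactor is $<1$ forces $\|\tilde w\|_{\mathcal{L}_{r,\infty}^{\alpha,\beta}(\eta)}=0$, i.e. $u=_{r}^{\mathrm{a.e.}}v$ on $(t^{*},t^{*}+\eta)$, which together with the definition of $t^{*}$ contradicts the maximality of $t^{*}$; hence $t^{*}=T$. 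I expect the main obstacle to be the step at the \emph{specific}, non-generic time $t^{*}$ identifying $(B[u,u]-B[v,v])(t^{*}+\cdot)$ with $B[\tilde u,\tilde w]+B[\tilde w,\tilde v]$: one cannot simply invoke the semigroup identity \eqref{uniqueness-semigroup}, which is asserted only for a.e. $t_{0}$, so the splitting and change of variables must be carried out directly, with the use of Fubini justified just as in the proof of \eqref{uniqueness-bilinear-estimate}. (Alternatively one could restart at a generic $t_{0}<t^{*}$ via \eqref{uniqueness-semigroup}, but this requires an extra uniformity-in-$t_{0}$ argument to ensure the resulting interval of agreement reaches past $t^{*}$; the direct splitting sidesteps this.)
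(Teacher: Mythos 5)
Your argument is correct, but it proceeds differently from the paper at the key continuation step. The paper also runs a maximal-time-of-agreement argument, but it never restarts at the critical time itself: it picks a small generic $\epsilon>0$ for which the dominated-convergence bound \eqref{uniqueness-proof} holds, $u(T_{0}-\epsilon)=_{r}v(T_{0}-\epsilon)$, and the a.e.-valid identities \eqref{heat-semigroup-property} and \eqref{uniqueness-semigroup} apply; then $\tau_{T_{0}-\epsilon}u$ and $\tau_{T_{0}-\epsilon}v$ both solve the problem on $(0,2\epsilon)$ with data $u^{0}=S[u(T_{0}-\epsilon)]$, whose norm is small by \eqref{uni-bilinear} and \eqref{uniqueness-proof}, so the small-data uniqueness lemma gives agreement on $(T_{0}-\epsilon,T_{0}+\epsilon)$, reaching $\epsilon$ past $T_{0}$ --- this is exactly how the paper supplies the ``uniformity in $t_{0}$'' you flag in your parenthetical. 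You instead subtract the two equations so the data term disappears, split the Duhamel integral by hand at the exact time $t^{*}$ (legitimate because Lemma \ref{intro-lemma}(i) gives absolute convergence of the $\mathrm{d}y\,\mathrm{d}s$ integral for a.e.\ $(t,x)$ --- your appeal to ``Fubini'' is really just this --- and the history piece cancels since $u\otimes u=v\otimes v$ a.e.\ on $(0,t^{*})\times\mathbb{R}^{n}$; in the case $r=\overline{\infty}$ the definedness is for a.e.\ $t$ and all $x$, matching $=_{\overline{\infty}}$), and you close by absorption on a short window, the smallness of ${\|\tilde u\|}_{\mathcal{L}_{r,\infty}^{\alpha,\beta}(\eta)}+{\|\tilde v\|}_{\mathcal{L}_{r,\infty}^{\alpha,\beta}(\eta)}$ coming from dominated convergence, which is where $\beta<\infty$ enters in both proofs. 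What your route buys: it bypasses the semigroup identities and the two preceding small-data lemmas entirely (no a priori bound $\Lambda<\tfrac{1}{2}$ is needed, since smallness comes from shrinking the window rather than from the data), and it treats the initial step and the continuation uniformly, so no separate argument that the agreement time is positive is required. What the paper's route buys: it reuses machinery (\eqref{heat-semigroup-property}, \eqref{uniqueness-semigroup}, the small-data lemmas) that is needed anyway for the existence and blowup arguments, and it avoids manipulating the integral equation at a non-generic time.
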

\begin{proof}
    Let $u,v$ be as described, and define
    \begin{equation}
        T_{0} := \sup\{T'\in(0,T)\text{ : }u(t)=_{r}v(t)\text{ for a.e.\ }t\in(0,T')\} \in[0,T].
    \end{equation}
    By the previous lemma, if $4C{(T')}^{\frac{1}{2}\left(1-\frac{2}{\alpha}-\frac{n}{r}\right)}{\|S[f]\|}_{\mathcal{L}_{r,\infty}^{\alpha,\beta}(T')}<1$ then we have uniqueness on the time interval $(0,T')$. By dominated convergence in $L^{\alpha,\beta}(\mathbb{R})$, this inequality holds for sufficiently small $T'$, so $T_{0}>0$.

    Suppose for the sake of contradiction that $T_{0}<T$. By dominated convergence in $L^{\alpha,\beta}(\mathbb{R})$, for $\epsilon>0$ sufficiently small we have
    \begin{equation}\label{uniqueness-proof}
        4C{(2\epsilon)}^{\frac{1}{2}\left(1-\frac{2}{\alpha}-\frac{n}{r}\right)}\left({\|\tau_{T_{0}-\epsilon}u\|}_{\mathcal{L}_{r,\infty}^{\alpha,\beta}(2\epsilon)}+C{(2\epsilon)}^{\frac{1}{2}\left(1-\frac{2}{\alpha}-\frac{n}{r}\right)}{\|\tau_{T_{0}-\epsilon}u\|}_{\mathcal{L}_{r,\infty}^{\alpha,\beta}(2\epsilon)}^{2}\right) < 1.
    \end{equation}
    By \eqref{heat-semigroup-property} and \eqref{uniqueness-semigroup}, we can choose a suitable value of $\epsilon$ satisfying \eqref{uniqueness-proof} such that $u(T_{0}-\epsilon)=_{r}v(T_{0}-\epsilon)$, and such that the functions $\tau_{T_{0}-\epsilon}u,\tau_{T_{0}-\epsilon}v\in\mathcal{L}_{r,\infty}^{\alpha,\beta}(2\epsilon)$ satisfy $S[u(T_{0}-\epsilon)]=_{r}^{\mathrm{a.e.}}\tau_{T_{0}-\epsilon}u+B[\tau_{T_{0}-\epsilon}u,\tau_{T_{0}-\epsilon}u]=_{r}^{\mathrm{a.e.}}\tau_{T_{0}-\epsilon}v+[\tau_{T_{0}-\epsilon}v,\tau_{T_{0}-\epsilon}v]$. By \eqref{uni-bilinear} and \eqref{uniqueness-proof}, we then have $4C{(2\epsilon)}^{\frac{1}{2}\left(1-\frac{2}{\alpha}-\frac{n}{r}\right)}{\|S[u(T_{0}-\epsilon)]\|}_{\mathcal{L}_{r,\infty}^{\alpha,\beta}(2\epsilon)}<1$. By the previous lemma, it follows that $u=_{r}^{\mathrm{a.e.}}v$ on the time interval $(T_{0}-\epsilon,T_{0}+\epsilon)$, so $u=_{r}^{\mathrm{a.e.}}v$ on $(0,T_{0}+\epsilon)$, which contradicts the definition of $T_{0}$.
\end{proof}
\section{Local existence and blowup rates}
For $r\in(n,\infty]$, define the constants $\alpha_{n}$, $\beta_{n/r}$, $\gamma_{n}$, $\delta_{n,r}$, $\eta_{n,r}$ such that
\begin{equation}
    {\|\Phi(t)\|}_{L^{r',1}(\mathbb{R}^{n})}^{*} \leq \alpha_{n}t^{-\frac{n}{2r}}, \quad \beta_{n/r} = \int_{0}^{1}{(1-s)}^{-\frac{1}{2}}s^{-\frac{n}{r}}\,\mathrm{d}s, \quad {\|\nabla\mathcal{T}(t)\|}_{L^{1}(\mathbb{R}^{n})} \leq \gamma_{n}t^{-\frac{1}{2}},
\end{equation}
\begin{equation}
    \delta_{n,r} = \beta_{n/r}\gamma_{n}, \quad \eta_{n,r} = r'\alpha_{n}\delta_{n,r}.
\end{equation}
We note that $\alpha_{n}$ may be chosen independently of $r$ by \eqref{heat-lorentz}. By \eqref{heat-estimate-1}, \eqref{heat-estimate-2}, \eqref{product-estimate-2}, \eqref{product-estimate-3}, \eqref{existence-bilinear-estimate} and \eqref{regularity-bilinear-estimate}, we have the estimates
\begin{equation}\label{exi-heat}
    {\|S[f]\|}_{\mathcal{J}_{\overline{\infty},\infty}^{-n/r}(\infty)} \leq r'\alpha_{n}{\|f\|}_{L^{r,\infty}(\mathbb{R}^{n})}^{*}, \quad {\|S[f]\|}_{\mathcal{J}_{p,q}^{0}(\infty)}\leq{\|f\|}_{L^{p,q}(\mathbb{R}^{n})},
\end{equation}
\begin{equation}\label{exi-bilinear-1}
    {\|B[u,v]\|}_{\mathcal{J}_{\overline{\infty},\infty}^{-n/r}(T)} \leq \delta_{n,r}T^{\frac{1}{2}\left(1-\frac{n}{r}\right)}{\|u\|}_{\mathcal{J}_{\overline{\infty},\infty}^{-n/r}(T)}{\|v\|}_{\mathcal{J}_{\overline{\infty},\infty}^{-n/r}(T)},
\end{equation}
\begin{equation}\label{exi-bilinear-2}
    {\|B[u,v]\|}_{\mathcal{J}_{p,q}^{0}(T)} \leq \delta_{n,r}T^{\frac{1}{2}\left(1-\frac{n}{r}\right)}\left({\|u\|}_{\mathcal{J}_{p,q}^{0}(T)}{\|v\|}_{\mathcal{J}_{\overline{\infty},\infty}^{-n/r}(T)} \wedge {\|u\|}_{\mathcal{J}_{\overline{\infty},\infty}^{-n/r}(T)}{\|v\|}_{\mathcal{J}_{p,q}^{0}(T)}\right)
\end{equation}
for $p\in(1,\infty]$, $q\in[1,\infty]$, $(p=\infty\Rightarrow q=\infty)$, $r\in(n,\infty]$ and $T\in(0,\infty)$.
\begin{theorem} (Local existence).
    Assume that $p\in(1,\infty]$, $q\in[1,\infty]$, $(p=\infty\Rightarrow q=\infty)$, $r\in(n,\infty]$ and $T\in(0,\infty)$. If $u^{0}\in\mathcal{J}_{p,q}^{0}(T)\cap\mathcal{J}_{\overline{\infty},\infty}^{-n/r}(T)$ and $4\delta_{n,r}T^{\frac{1}{2}\left(1-\frac{n}{r}\right)}{\|u^{0}\|}_{\mathcal{J}_{\overline{\infty},\infty}^{-n/r}(T)}<1$ (which occurs if $f\in L^{p,q}(\mathbb{R}^{n})\cap L^{r,\infty}(\mathbb{R}^{n})$, $u^{0}=S[f]$ and $4\eta_{n,r}T^{\frac{1}{2}\left(1-\frac{n}{r}\right)}{\|f\|}_{L^{r,\infty}(\mathbb{R}^{n})}<1$), then there exists $u\in\mathcal{J}_{p,q}^{0}(T)\cap\mathcal{J}_{\overline{\infty},\infty}^{-n/r}(T)$ satisfying $u=_{\mathrm{e.}}^{\mathrm{e.}}u^{0}-B[u,u]$.
\end{theorem}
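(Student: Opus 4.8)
The plan is to construct $u$ by Picard iteration, running the scheme by hand and tracking the two norms $\|\cdot\|_{\mathcal{J}_{\overline{\infty},\infty}^{-n/r}(T)}$ and $\|\cdot\|_{\mathcal{J}_{p,q}^{0}(T)}$ separately. Abbreviate $\kappa:=\delta_{n,r}T^{\frac{1}{2}\left(1-\frac{n}{r}\right)}$, $a:=\|u^{0}\|_{\mathcal{J}_{\overline{\infty},\infty}^{-n/r}(T)}$ and $b:=\|u^{0}\|_{\mathcal{J}_{p,q}^{0}(T)}$, so the hypothesis is $4\kappa a<1$; in the parenthetical case $u^{0}=S[f]$ with $f\in L^{p,q}(\mathbb{R}^{n})\cap L^{r,\infty}(\mathbb{R}^{n})$, the estimates \eqref{exi-heat} give $a\leq r'\alpha_{n}\|f\|_{L^{r,\infty}(\mathbb{R}^{n})}^{*}$ and $b\leq\|f\|_{L^{p,q}(\mathbb{R}^{n})}$, whence $4\kappa a\leq 4\eta_{n,r}T^{\frac{1}{2}\left(1-\frac{n}{r}\right)}\|f\|_{L^{r,\infty}(\mathbb{R}^{n})}^{*}<1$ and that case reduces to the general one. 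Set $u_{0}:=u^{0}$ and $u_{m+1}:=u^{0}-B[u_{m},u_{m}]$. Each iterate is a genuine function on $(0,T)\times\mathbb{R}^{n}$: by \eqref{product-estimate-2}, $u_{m}\in\mathcal{J}_{\overline{\infty},\infty}^{-n/r}(T)$ implies $u_{m}\otimes u_{m}\in\mathcal{K}_{\infty,\infty}^{-2n/r}(T)$, so Lemma \ref{semigrouplem}(ii), applied with internal index $\infty$, shows $B[u_{m},u_{m}]=A[u_{m}\otimes u_{m}]$ is defined everywhere, and \eqref{exi-bilinear-2} shows $B[u_{m},u_{m}]\in\mathcal{J}_{p,q}^{0}(T)$; inductively $u_{m}\in\mathcal{J}_{p,q}^{0}(T)\cap\mathcal{J}_{\overline{\infty},\infty}^{-n/r}(T)$ for all $m$.

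Next I would establish uniform bounds. Let $A\in[0,\tfrac{1}{2\kappa})$ be the smaller root of $A=a+\kappa A^{2}$, which exists precisely because $4\kappa a<1$; note $\kappa A<\tfrac12$ and $A\leq 2a$. From \eqref{exi-bilinear-1} we get $\|u_{m+1}\|_{\mathcal{J}_{\overline{\infty},\infty}^{-n/r}(T)}\leq a+\kappa\|u_{m}\|_{\mathcal{J}_{\overline{\infty},\infty}^{-n/r}(T)}^{2}$, so by induction $\|u_{m}\|_{\mathcal{J}_{\overline{\infty},\infty}^{-n/r}(T)}\leq A$ for all $m$. Feeding this into \eqref{exi-bilinear-2} gives $\|u_{m+1}\|_{\mathcal{J}_{p,q}^{0}(T)}\leq b+\kappa A\|u_{m}\|_{\mathcal{J}_{p,q}^{0}(T)}$, hence $\|u_{m}\|_{\mathcal{J}_{p,q}^{0}(T)}\leq b/(1-\kappa A)\leq 2b$ for all $m$.

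For convergence, bilinearity gives $u_{m+1}-u_{m}=-B[u_{m},u_{m}-u_{m-1}]-B[u_{m}-u_{m-1},u_{m-1}]$. By \eqref{exi-bilinear-1} and the bound just obtained, $\|u_{m+1}-u_{m}\|_{\mathcal{J}_{\overline{\infty},\infty}^{-n/r}(T)}\leq 2\kappa A\,\|u_{m}-u_{m-1}\|_{\mathcal{J}_{\overline{\infty},\infty}^{-n/r}(T)}$, and since $2\kappa A<1$ the sequence $(u_{m})$ is Cauchy in $\mathcal{J}_{\overline{\infty},\infty}^{-n/r}(T)$, converging to some $u$ with $\|u\|_{\mathcal{J}_{\overline{\infty},\infty}^{-n/r}(T)}\leq A$. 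Convergence here means $\|u_{m}(t)-u(t)\|_{L^{\overline{\infty}}(\mathbb{R}^{n})}\to 0$, i.e.\ $u_{m}(t,\cdot)\to u(t,\cdot)$ uniformly, for each $t$; combining this pointwise convergence with the lower semicontinuity of the Lorentz quasinorm under pointwise limits (a consequence of Fatou's lemma applied to $f^{**}$, together with ${\|\cdot\|}_{L^{p,q}(\mathbb{R}^{n})}^{*}\leq{\|\cdot\|}_{L^{p,q}(\mathbb{R}^{n})}\leq p'{\|\cdot\|}_{L^{p,q}(\mathbb{R}^{n})}^{*}$) and $\sup_{m}\|u_{m}\|_{\mathcal{J}_{p,q}^{0}(T)}\leq 2b$, one gets $\|u(t)\|_{L^{p,q}(\mathbb{R}^{n})}^{*}\leq 2p'b$ for each $t$, so $u\in\mathcal{J}_{p,q}^{0}(T)$. (Alternatively one may run a coupled Cauchy estimate from \eqref{exi-bilinear-2} and the geometric decay above to see directly that $(u_{m})$ is Cauchy in $\mathcal{J}_{p,q}^{0}(T)$.) Finally, to pass to the limit in $u_{m+1}=u^{0}-B[u_{m},u_{m}]$, bilinearity and \eqref{exi-bilinear-1} give $\|B[u_{m},u_{m}]-B[u,u]\|_{\mathcal{J}_{\overline{\infty},\infty}^{-n/r}(T)}\leq\kappa\big(\|u_{m}\|_{\mathcal{J}_{\overline{\infty},\infty}^{-n/r}(T)}+\|u\|_{\mathcal{J}_{\overline{\infty},\infty}^{-n/r}(T)}\big)\|u_{m}-u\|_{\mathcal{J}_{\overline{\infty},\infty}^{-n/r}(T)}\to 0$, so letting $m\to\infty$ in $\mathcal{J}_{\overline{\infty},\infty}^{-n/r}(T)$ yields $u=u^{0}-B[u,u]$ in that space; since the equivalence relation there is pointwise-everywhere equality and $u^{0}$, $u$ and $B[u,u]$ are all genuine functions on all of $(0,T)\times\mathbb{R}^{n}$ (the last by Lemma \ref{semigrouplem}(ii) as above), this is exactly $u=_{\mathrm{e.}}^{\mathrm{e.}}u^{0}-B[u,u]$.

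The main obstacle is structural rather than computational: one cannot simply invoke an abstract bilinear fixed-point lemma on the intersection space, because the smallness assumption $4\kappa a<1$ controls only the $\mathcal{J}_{\overline{\infty},\infty}^{-n/r}$-norm of $u^{0}$ and says nothing about its $\mathcal{J}_{p,q}^{0}$-norm $b$. The argument must therefore decouple the two norms — closing the quadratic (nonlinear) estimate using the small norm alone, and then propagating merely linear control to the possibly large norm — which forces the iteration to be carried out explicitly; once this is set up, everything else is routine bookkeeping with \eqref{exi-bilinear-1}–\eqref{exi-bilinear-2}.
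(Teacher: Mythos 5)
Your proposal is correct and follows essentially the same route as the paper: explicit Picard iteration, closing the quadratic estimate in the small $\mathcal{J}_{\overline{\infty},\infty}^{-n/r}(T)$ norm via \eqref{exi-bilinear-1} and propagating only linear control to the possibly large $\mathcal{J}_{p,q}^{0}(T)$ norm via the mixed estimate \eqref{exi-bilinear-2}, then passing to the limit by continuity of $B$ on $\mathcal{J}_{\overline{\infty},\infty}^{-n/r}(T)$. The only (harmless) deviation is at the last step: the paper gets $u\in\mathcal{J}_{p,q}^{0}(T)$ by running the Cauchy estimate directly in that norm — exactly your parenthetical alternative — whereas your main line uses uniform bounds on the iterates plus Fatou-type lower semicontinuity of the Lorentz (quasi)norm, which also works.
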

\begin{proof}
    Define inductively, for each $m\in\mathbb{Z}_{\geq0}$, $u^{m+1}:=u^{0}-B[u^{m},u^{m}]\in\mathcal{J}_{p,q}^{0}(T)\cap\mathcal{J}_{\overline{\infty},\infty}^{-n/r}(T)$ (defined pointwise for all $(t,x)\in(0,T)\times\mathbb{R}^{n}$). By \eqref{exi-bilinear-1}, for each $m\in\mathbb{Z}_{\geq0}$ we have
    \begin{equation}
        \delta_{n,r}T^{\frac{1}{2}\left(1-\frac{n}{r}\right)}{\|u^{m+1}\|}_{\mathcal{J}_{\overline{\infty},\infty}^{-n/r}(T)} \leq \delta_{n,r}T^{\frac{1}{2}\left(1-\frac{n}{r}\right)}{\|u^{0}\|}_{\mathcal{J}_{\overline{\infty},\infty}^{-n/r}(T)} + {\left(\delta_{n,r}T^{\frac{1}{2}\left(1-\frac{n}{r}\right)}{\|u^{m}\|}_{\mathcal{J}_{\overline{\infty},\infty}^{-n/r}(T)}\right)}^{2},
    \end{equation}
    so by induction on $m$ we have that $\delta_{n,r}T^{\frac{1}{2}\left(1-\frac{n}{r}\right)}{\|u^{m}\|}_{\mathcal{J}_{\overline{\infty},\infty}^{-n/r}(T)}$ is bounded above by the smaller solution $\Lambda\in[0,\frac{1}{2})$ of the quadratic
    \begin{equation}
        \Lambda = \delta_{n,r}T^{\frac{1}{2}\left(1-\frac{n}{r}\right)}{\|u^{0}\|}_{\mathcal{J}_{\overline{\infty},\infty}^{-n/r}(T)} + \Lambda^{2}.
    \end{equation}
    For each $m\in\mathbb{Z}_{>0}$ we have
    \begin{equation}
        u^{m+1} - u^{m} = -B[u^{m},u^{m}] + B[u^{m-1},u^{m-1}] = - B[u^{m},u^{m}-u^{m-1}] - B[u^{m}-u^{m-1},u^{m-1}],
    \end{equation}
    so by \eqref{exi-bilinear-1} we have
    \begin{equation}
    \begin{aligned}
        {\|u^{m+1} - u^{m}\|}_{\mathcal{J}_{\overline{\infty},\infty}^{-n/r}(T)} &\leq \delta_{n,r}T^{\frac{1}{2}\left(1-\frac{n}{r}\right)}\left({\|u^{m}\|}_{\mathcal{J}_{\overline{\infty},\infty}^{-n/r}(T)}+{\|u^{m-1}\|}_{\mathcal{J}_{\overline{\infty},\infty}^{-n/r}(T)}\right){\|u^{m} - u^{m-1}\|}_{\mathcal{J}_{\overline{\infty},\infty}^{-n/r}(T)} \\
        &\leq 2\Lambda{\|u^{m} - u^{m-1}\|}_{\mathcal{J}_{\overline{\infty},\infty}^{-n/r}(T)},
    \end{aligned}
    \end{equation}
    and by \eqref{exi-bilinear-2} we have
    \begin{equation}
    \begin{aligned}
        {\|u^{m+1} - u^{m}\|}_{\mathcal{J}_{p,q}^{0}(T)} &\leq \delta_{n,r}T^{\frac{1}{2}\left(1-\frac{n}{r}\right)}\left({\|u^{m}\|}_{\mathcal{J}_{\overline{\infty},\infty}^{-n/r}(T)}+{\|u^{m-1}\|}_{\mathcal{J}_{\overline{\infty},\infty}^{-n/r}(T)}\right){\|u^{m} - u^{m-1}\|}_{\mathcal{J}_{p,q}^{0}(T)} \\
        &\leq 2\Lambda{\|u^{m} - u^{m-1}\|}_{\mathcal{J}_{p,q}^{0}(T)}.
    \end{aligned}
    \end{equation}
    Since $2\Lambda<1$, we deduce that $\sum_{m=1}^{\infty}{\|u^{m+1} - u^{m}\|}_{\mathcal{J}_{\overline{\infty},\infty}^{-n/r}(T)}$ and $\sum_{m=1}^{\infty}{\|u^{m+1} - u^{m}\|}_{\mathcal{J}_{p,q}^{0}(T)}$ converge, so $u^{m}$ converges in $\mathcal{J}_{p,q}^{0}(T)\cap\mathcal{J}_{\overline{\infty},\infty}^{-n/r}(T)$ to some $u\in\mathcal{J}_{p,q}^{0}(T)\cap\mathcal{J}_{\overline{\infty},\infty}^{-n/r}(T)$. By continuity of $B:\mathcal{J}_{\overline{\infty},\infty}^{-n/r}(T)\times\mathcal{J}_{\overline{\infty},\infty}^{-n/r}(T)\rightarrow\mathcal{J}_{\overline{\infty},\infty}^{-n/r}(T)$, the limit satisfies $u=_{\mathrm{e.}}^{\mathrm{e.}}u^{0}-B[u,u]$.
\end{proof}
\begin{theorem} (Blowup rates).
    Assume that $p\in(1,\infty]$, $q\in[1,\infty]$, $(p=\infty\Rightarrow q=\infty)$ and $r_{0}\in(n,\infty]$. Let $f\in L^{p,q}(\mathbb{R}^{n})\cap L^{r_{0},\infty}(\mathbb{R}^{n})$, and let $T\in(0,\infty]$ be the maximal time for which there exists $u\in\mathcal{J}_{p,q}^{0}(T_{-})\cap\mathcal{J}_{\overline{\infty},\infty}^{-n/r_{0}}(T_{-})$ satisfying $u =_{\mathrm{e.}}^{\mathrm{e.}} S[f] - B[u,u]$. If $T<\infty$, then the unique solution $u\in\mathcal{J}_{p,q}^{0}(T_{-})\cap\mathcal{J}_{\overline{\infty},\infty}^{-n/r_{0}}(T_{-})$ satisfies
    \begin{equation}
        {\|u(t_{0})\|}_{L^{r,\infty}(\mathbb{R}^{n})}^{*} \geq \frac{1}{4\eta_{n,r}{(T-t_{0})}^{\frac{1}{2}\left(1-\frac{n}{r}\right)}} \quad \text{for all }r\in(n,\infty]\text{ and }t_{0}\in(0,T),
    \end{equation}
    where we use the convention that ${\|u(t_{0})\|}_{L^{r,\infty}(\mathbb{R}^{n})}^{*}=\infty$ if $u(t_{0})\notin L^{r,\infty}(\mathbb{R}^{n})$.
\end{theorem}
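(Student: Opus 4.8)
The plan is to argue by contradiction with a restarting (continuation) argument in the spirit of Leray: if the asserted lower bound fails at some time $t_{0}$ and some exponent $r$, I will restart the equation at time $t_{0}$ with datum $u(t_{0})$, solve it on an interval strictly longer than $T-t_{0}$, and glue to extend $u$ past $T$, contradicting maximality. (Note the solution $u$ in the statement is unique in its class by Corollary \ref{intro-uniqueness-corollary}(ii), applied with Lorentz exponent $\overline\infty$ and $\sigma=-n/r_{0}$; the hypothesis $-\sigma+\tfrac{n}{\overline\infty}<1$ reads $\tfrac{n}{r_{0}}<1$, which holds.)

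So suppose, for contradiction, that there are $r\in(n,\infty]$ and $t_{0}\in(0,T)$ with ${\|u(t_{0})\|}_{L^{r,\infty}(\mathbb{R}^{n})}^{*}<\big(4\eta_{n,r}\,(T-t_{0})^{\frac{1}{2}(1-\frac{n}{r})}\big)^{-1}$; then $g:=u(t_{0})\in L^{r,\infty}(\mathbb{R}^{n})$, and also $g\in L^{p,q}(\mathbb{R}^{n})\subseteq L^{p,\infty}(\mathbb{R}^{n})$ since $u\in\mathcal{J}_{p,q}^{0}(T_{-})$. By continuity of $\widetilde{T}\mapsto\widetilde{T}^{\frac{1}{2}(1-\frac{n}{r})}$ I may choose $\widetilde{T}>T-t_{0}$ still satisfying $4\eta_{n,r}\,\widetilde{T}^{\frac{1}{2}(1-\frac{n}{r})}{\|g\|}_{L^{r,\infty}(\mathbb{R}^{n})}^{*}<1$, and then Theorem \ref{intro-existence} furnishes $v\in\mathcal{J}_{p,q}^{0}(\widetilde{T})\cap\mathcal{J}_{\overline\infty,\infty}^{-n/r}(\widetilde{T})$ with $v=_{\mathrm{e.}}^{\mathrm{e.}}S[g]-B[v,v]$.

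The crux is to identify $v$ on the overlap $(0,T-t_{0})$ with the time-translate $\tau_{t_{0}}u$, and thereby to improve the decay of $v$. Applying $\tau_{t_{0}}$ to $u=_{\mathrm{e.}}^{\mathrm{e.}}S[f]-A[u\otimes u]$, and using the semigroup identities \eqref{heat-semigroup-property} and \eqref{existence-semigroup} (the latter with $w=u\otimes u$, so that $A[u\otimes u](t_{0})=B[u,u](t_{0})$ and $\tau_{t_{0}}(u\otimes u)=(\tau_{t_{0}}u)\otimes(\tau_{t_{0}}u)$), together with $u(t_{0})=S[f](t_{0})-B[u,u](t_{0})$, I obtain $\tau_{t_{0}}u=_{\mathrm{e.}}^{\mathrm{e.}}S[u(t_{0})]-B[\tau_{t_{0}}u,\tau_{t_{0}}u]=S[g]-B[\tau_{t_{0}}u,\tau_{t_{0}}u]$ on $(0,T-t_{0})$. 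Both $\tau_{t_{0}}u$ and $v$ lie in $\mathcal{J}_{\overline\infty,\infty}^{\sigma}((T-t_{0})_{-})$ for $\sigma:=-n/\min(r,r_{0})\in(-1,0]$ (for $\tau_{t_{0}}u$ because $u\in\mathcal{J}_{\overline\infty,\infty}^{-n/r_{0}}(T_{-})$ and $t_{0}>0$; for $v$ because $\sigma\le-n/r$), so Corollary \ref{intro-uniqueness-corollary}(ii) gives $v=_{\mathrm{e.}}^{\mathrm{e.}}\tau_{t_{0}}u$ on $(0,T-t_{0})$. Hence ${\|v(s)\|}_{L^{\overline\infty}(\mathbb{R}^{n})}={\|u(t_{0}+s)\|}_{L^{\overline\infty}(\mathbb{R}^{n})}$ is bounded as $s\downarrow0$ (again by $u\in\mathcal{J}_{\overline\infty,\infty}^{-n/r_{0}}(T_{-})$, evaluated at times bounded away from $0$), while $v\in\mathcal{J}_{\overline\infty,\infty}^{-n/r}(\widetilde{T})$ bounds ${\|v(s)\|}_{L^{\overline\infty}(\mathbb{R}^{n})}$ for $s$ bounded away from $0$; therefore $v\in\mathcal{J}_{\overline\infty,\infty}^{0}(\widetilde{T})\subseteq\mathcal{J}_{\overline\infty,\infty}^{-n/r_{0}}(\widetilde{T})$, so $v\in\mathcal{J}_{p,q}^{0}(\widetilde{T})\cap\mathcal{J}_{\overline\infty,\infty}^{-n/r_{0}}(\widetilde{T})$.

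Finally I would glue: set $w:=u$ on $(0,t_{0}]$ and $w(t):=v(t-t_{0})$ for $t\in(t_{0},t_{0}+\widetilde{T})$. Since $v=\tau_{t_{0}}u$ on $(0,T-t_{0})$, this $w$ agrees with $u$ on $(0,T)$, and by the previous paragraph $w\in\mathcal{J}_{p,q}^{0}((t_{0}+\widetilde{T})_{-})\cap\mathcal{J}_{\overline\infty,\infty}^{-n/r_{0}}((t_{0}+\widetilde{T})_{-})$. For $t<T$ the equation $w=_{\mathrm{e.}}^{\mathrm{e.}}S[f]-B[w,w]$ holds because $B[w,w](t)$ depends only on $w|_{(0,t)}=u|_{(0,t)}$; for $t\ge t_{0}$, writing $B[w,w](t)=\int_{0}^{t_{0}}\nabla\mathcal{T}(t-s)*(u\otimes u)(s)\,\mathrm{d}s+\int_{t_{0}}^{t}\nabla\mathcal{T}(t-s)*(w\otimes w)(s)\,\mathrm{d}s$ and using the kernel semigroup \eqref{heat-type-semigroup} (so $\nabla\mathcal{T}(t-s)=\Phi(t-t_{0})*\nabla\mathcal{T}(t_{0}-s)$ in the first integral) together with the change of variables $s\mapsto s+t_{0}$ in the second yields $B[w,w](t)=S[B[u,u](t_{0})](t-t_{0})+B[v,v](t-t_{0})$, hence $S[f](t)-B[w,w](t)=S[S[f](t_{0})-B[u,u](t_{0})](t-t_{0})-B[v,v](t-t_{0})=S[g](t-t_{0})-B[v,v](t-t_{0})=v(t-t_{0})=w(t)$. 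Since $t_{0}+\widetilde{T}>T$, this contradicts the maximality of $T$, and the claimed bound follows. I expect the main obstacle to be precisely the decay upgrade of $v$ in the third paragraph: Theorem \ref{intro-existence} only guarantees ${\|v(s)\|}_{L^{\overline\infty}(\mathbb{R}^{n})}\lesssim s^{-n/(2r)}$, which is singular at $s=0$ and would keep the glued $w$ out of $\mathcal{J}_{\overline\infty,\infty}^{-n/r_{0}}$ near the gluing time $t_{0}$; matching $v$ with $\tau_{t_{0}}u$ on the overlap removes this singularity. The remaining manipulations with the semigroup identities (and the Fubini justifications behind them, as in the proof of Lemma \ref{semigrouplem}) are routine bookkeeping.
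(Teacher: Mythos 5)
Your proposal is correct and takes essentially the same route as the paper's proof: assume the bound fails at some $(t_{0},r)$, restart at $t_{0}$ via the local existence theorem on an interval slightly longer than $T-t_{0}$, identify the restarted solution with $\tau_{t_{0}}u$ on the overlap using \eqref{heat-semigroup-property}, \eqref{existence-semigroup} and Corollary \ref{intro-uniqueness-corollary}, and glue to contradict maximality of $T$. Your explicit verification that the glued function lies in the $\mathcal{J}_{\overline{\infty},\infty}^{-n/r_{0}}$ class (not merely $\mathcal{J}_{\overline{\infty},\infty}^{-n/r}$), by using the identification with $\tau_{t_{0}}u$ to remove the $s^{-n/(2r)}$ singularity at the gluing time, is a detail the paper passes over quickly, and you handle it correctly.
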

\begin{proof}
    The previous theorem ensures that $T>0$. Suppose that $T<\infty$ and, for the sake of contradiction, that there exist $t_{0}\in(0,T)$ and $r\in(n,\infty]$ such that $4\eta_{n,r}{(T-t_{0})}^{\frac{1}{2}\left(1-\frac{n}{r}\right)}{\|u(t_{0})\|}_{L^{r,\infty}(\mathbb{R}^{n})}^{*}<1$. Then we can choose $T_{0}\in(T,\infty)$ sufficiently close to $T$ such that $4\eta_{n,r}{(T_{0}-t_{0})}^{\frac{1}{2}\left(1-\frac{n}{r}\right)}{\|u(t_{0})\|}_{L^{r,\infty}(\mathbb{R}^{n})}^{*}<1$. By the previous theorem, there exists $w\in\mathcal{J}_{p,q}^{0}(T_{0}-t_{0})\cap\mathcal{J}_{\overline{\infty},\infty}^{-n/r}(T_{0}-t_{0})$ satisfying $w=_{\mathrm{e.}}^{\mathrm{e.}}S[u(t_{0})]-B[w,w]$. By \eqref{heat-semigroup-property} and \eqref{existence-semigroup}, the function $\tau_{t_{0}}u\in\mathcal{J}_{\overline{\infty},\infty}^{0}({(T-t_{0})}_{-})\subseteq\mathcal{J}_{\overline{\infty},\infty}^{-n/r}({(T-t_{0})}_{-})$ satisfies $\tau_{t_{0}}u=_{\mathrm{e.}}^{\mathrm{e.}}S[u(t_{0})]-B[\tau_{t_{0}}u,\tau_{t_{0}}u]$. By uniqueness in $\mathcal{J}_{\overline{\infty},\infty}^{-n/r}({(T-t_{0})}_{-})$ (Corollary \ref{intro-uniqueness-corollary}), we deduce that $u(t+t_{0})=_{\mathrm{e.}}w(t)$ for all $t\in(0,T-t_{0})$. Therefore
    \begin{equation}
        v(t) := \left\{\begin{array}{ll} u(t) & \text{if }t\in(0,T), \\ w(t-t_{0}) & \text{if }t\in(t_{0},T) \end{array}\right.
    \end{equation}
    determines a well-defined function $v\in\mathcal{J}_{p,q}^{0}(T_{0})\cap\mathcal{J}_{\overline{\infty},\infty}^{-n/r}(T_{0})$. By \eqref{heat-semigroup-property} and \eqref{existence-semigroup} we have $v=_{\mathrm{e.}}^{\mathrm{e.}}S[f]-B[v,v]$, which contradicts maximality of $T$.
\end{proof}
\section{Continuity of solutions}
If $p\in(1,\infty]$, $q\in[1,\infty]$, $(p=\infty\Rightarrow q=\infty)$, $r\in(n,\infty]$, and $f\in L^{p,q}(\mathbb{R}^{n})\cap L^{r,\infty}(\mathbb{R}^{n})$, then there exists (at least locally in time) a solution $u\in\mathcal{J}_{p,q}^{0}(T_{-})\cap\mathcal{J}_{\overline{\infty},\infty}^{-n/r}(T_{-})$ to the equation $u=_{\mathrm{e.}}^{\mathrm{e.}}S[f]-B[u,u]$. The bilinear estimate ${\|B[u,u](t)\|}_{L^{p,q}(\mathbb{R}^{n})}\leq\delta_{n,r}t^{\frac{1}{2}\left(1-\frac{n}{r}\right)}{\|u\|}_{\mathcal{J}_{p,q}^{0}(t)}{\|u(t)\|}_{\mathcal{J}_{\overline{\infty},\infty}^{-n/r}(t)}$ ensures that ${\|B[u,u](t)\|}_{L^{p,q}(\mathbb{R}^{n})}\rightarrow0$ as $t\rightarrow0$. In the case $q<\infty$, approximation of identity ensures that ${\|S[f](t)-f\|}_{L^{p,q}(\mathbb{R}^{n})}\rightarrow0$ as $t\rightarrow0$, which implies that ${\|u(t)-f\|}_{L^{p,q}(\mathbb{R}^{n})}\rightarrow0$ as $t\rightarrow0$.

We now turn our attention to continuity away from the initial time. By \eqref{heat-semigroup-property} and \eqref{existence-semigroup}, the solution $u$ in the previous paragraph satisfies
\begin{equation}\label{continuity-semigroup}
    u_{i}(t,x) = \int_{\mathbb{R}^{n}}\Phi(t-t_{0},x-y)u_{i}(t_{0},y)\,\mathrm{d}y - \int_{t_{0}}^{t}\int_{\mathbb{R}^{n}}\nabla_{k}\mathcal{T}_{ij}(t-s,x-y)u_{j}(s,y)u_{k}(s,y)\,\mathrm{d}y\,\mathrm{d}s
\end{equation}
for all $0<t_{0}<t<T$ and $x\in\mathbb{R}^{n}$, with ${\|u(t)\|}_{L^{p,q}(\mathbb{R}^{n})}$ and ${\|u(t)\|}_{L^{\overline{\infty}}(\mathbb{R}^{n})}$ being locally bounded in $t\in(0,T)$.
\begin{theorem} (Continuity in time).
    Suppose that $p\in(1,\infty]$, $q\in[1,\infty]$ and $(p=\infty\Rightarrow q=\infty)$. Assume that $u$ satisfies \eqref{continuity-semigroup} for all $0<t_{0}<t<T$ and $x\in\mathbb{R}^{n}$, with ${\|u(t)\|}_{L^{p,q}(\mathbb{R}^{n})}$ and ${\|u(t)\|}_{L^{\overline{\infty}}(\mathbb{R}^{n})}$ being locally bounded for $t\in(0,T)$. Then $u$ defines a continuous function $u:(0,T)\rightarrow L^{p,q}(\mathbb{R}^{n})\cap L^{\overline{\infty}}(\mathbb{R}^{n})$.
\end{theorem}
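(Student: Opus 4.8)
The plan is to fix $t_{1}\in(0,T)$ and a value $T'\in(t_{1},T)$, and to prove continuity of $u$ at $t_{1}$ as a map into $L^{p,q}(\mathbb{R}^{n})\cap L^{\overline{\infty}}(\mathbb{R}^{n})$. Choose any $t_{0}\in(0,t_{1})$; by the local boundedness hypothesis $u(t_{0})\in L^{p,q}(\mathbb{R}^{n})\cap L^{\overline{\infty}}(\mathbb{R}^{n})$, and \eqref{continuity-semigroup} gives, for all $t\in(t_{0},T)$,
\begin{equation*}
    u(t)=\Phi(t-t_{0})*u(t_{0})-\int_{t_{0}}^{t}\nabla\mathcal{T}(t-s)*w(s)\,\mathrm{d}s=:H(t)-G(t),\qquad w(s):=(u\otimes u)(s).
\end{equation*}
The trivial product bounds $\|w(s)\|_{L^{p,q}(\mathbb{R}^{n})}\leq\|u(s)\|_{L^{p,q}(\mathbb{R}^{n})}\|u(s)\|_{L^{\overline{\infty}}(\mathbb{R}^{n})}$ and $\|w(s)\|_{L^{\overline{\infty}}(\mathbb{R}^{n})}\leq\|u(s)\|_{L^{\overline{\infty}}(\mathbb{R}^{n})}^{2}$, together with the hypothesis, furnish a constant $M<\infty$ with $\|w(s)\|_{L^{p,q}(\mathbb{R}^{n})}+\|w(s)\|_{L^{\overline{\infty}}(\mathbb{R}^{n})}\leq M$ for all $s\in[t_{0},T']$. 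It then suffices to show that $H$ and $G$ are each continuous at $t_{1}$ in $L^{p,q}(\mathbb{R}^{n})\cap L^{\overline{\infty}}(\mathbb{R}^{n})$.

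For the linear term, $H(t)-H(t_{1})=\bigl(\Phi(t-t_{0})-\Phi(t_{1}-t_{0})\bigr)*u(t_{0})$, so by the convolution inequality $\|f*g\|_{L^{p,q}(\mathbb{R}^{n})}\leq\|f\|_{L^{1}(\mathbb{R}^{n})}\|g\|_{L^{p,q}(\mathbb{R}^{n})}$ together with the elementary bound $\|f*g\|_{L^{\overline{\infty}}(\mathbb{R}^{n})}\leq\|f\|_{L^{1}(\mathbb{R}^{n})}\|g\|_{L^{\overline{\infty}}(\mathbb{R}^{n})}$, the quantity $\|H(t)-H(t_{1})\|_{L^{p,q}(\mathbb{R}^{n})}+\|H(t)-H(t_{1})\|_{L^{\overline{\infty}}(\mathbb{R}^{n})}$ is bounded by $\|\Phi(t-t_{0})-\Phi(t_{1}-t_{0})\|_{L^{1}(\mathbb{R}^{n})}\bigl(\|u(t_{0})\|_{L^{p,q}(\mathbb{R}^{n})}+\|u(t_{0})\|_{L^{\overline{\infty}}(\mathbb{R}^{n})}\bigr)$, which tends to $0$ as $t\to t_{1}$ since $\Phi:(0,\infty)\to L^{1}(\mathbb{R}^{n})$ is continuous (Lemma \ref{heat-type-continuity}) and $t_{1}-t_{0}>0$.

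For the bilinear term $G$, which is the heart of the matter, fix $\delta\in\bigl(0,\tfrac{1}{2}(t_{1}-t_{0})\bigr)$ and, for $t$ with $|t-t_{1}|<\delta/2$, split the $s$-integrals defining $G(t)$ and $G(t_{1})$ at $t_{1}-\delta$. The two ``near-diagonal'' pieces $\int_{t_{1}-\delta}^{t}\nabla\mathcal{T}(t-s)*w(s)\,\mathrm{d}s$ and $\int_{t_{1}-\delta}^{t_{1}}\nabla\mathcal{T}(t_{1}-s)*w(s)\,\mathrm{d}s$ are each controlled, in both the $L^{p,q}$- and $L^{\overline{\infty}}$-norms, by Minkowski's inequality, the two convolution inequalities above, and $\|\nabla\mathcal{T}(\tau)\|_{L^{1}(\mathbb{R}^{n})}\lesssim_{n}\tau^{-1/2}$, giving a bound $\lesssim_{n}M\bigl(\int_{t_{1}-\delta}^{t}|t-s|^{-1/2}\,\mathrm{d}s+\int_{t_{1}-\delta}^{t_{1}}(t_{1}-s)^{-1/2}\,\mathrm{d}s\bigr)\lesssim_{n}M\delta^{1/2}$. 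The remaining piece is $\int_{t_{0}}^{t_{1}-\delta}\bigl(\nabla\mathcal{T}(t-s)-\nabla\mathcal{T}(t_{1}-s)\bigr)*w(s)\,\mathrm{d}s$, in which $t-s$ and $t_{1}-s$ both lie in the compact interval $[\delta/2,T']$; since $\nabla\mathcal{T}=D^{\alpha}\mathcal{T}$ with $|\alpha|=1$ is continuous, hence uniformly continuous on compacts, into $L^{1}(\mathbb{R}^{n})$ (Lemma \ref{heat-type-continuity}), this piece is bounded by $M(t_{1}-t_{0})\sup_{s\in[t_{0},t_{1}-\delta]}\|\nabla\mathcal{T}(t-s)-\nabla\mathcal{T}(t_{1}-s)\|_{L^{1}(\mathbb{R}^{n})}\to0$ as $t\to t_{1}$ for fixed $\delta$. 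Hence $\limsup_{t\to t_{1}}\bigl(\|G(t)-G(t_{1})\|_{L^{p,q}(\mathbb{R}^{n})}+\|G(t)-G(t_{1})\|_{L^{\overline{\infty}}(\mathbb{R}^{n})}\bigr)\lesssim_{n}M\delta^{1/2}$; letting $\delta\to0$ gives continuity of $G$ at $t_{1}$, and the theorem follows.

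The step I expect to be the main obstacle is the continuity of $G$: the factor $(t-s)^{-1/2}$ in $\|\nabla\mathcal{T}(t-s)\|_{L^{1}(\mathbb{R}^{n})}$ is singular as $s\uparrow t$, and the upper endpoint of the $s$-integration moves with $t$, so neither dominated convergence nor a direct appeal to continuity of $\nabla\mathcal{T}$ in $L^{1}(\mathbb{R}^{n})$ applies at once. The $\delta$-splitting is designed precisely to trade the singular, moving-endpoint region (whose contribution is $O(\delta^{1/2})$ uniformly in $t$, using only the local boundedness of $\|u(s)\|_{L^{p,q}(\mathbb{R}^{n})}$ and $\|u(s)\|_{L^{\overline{\infty}}(\mathbb{R}^{n})}$) against the region $s\leq t_{1}-\delta$, on which $\nabla\mathcal{T}(t-s)\to\nabla\mathcal{T}(t_{1}-s)$ in $L^{1}(\mathbb{R}^{n})$ uniformly in $s$.
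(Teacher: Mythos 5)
Your proposal is correct and takes essentially the same approach as the paper: treat the heat term via continuity of $\Phi:(0,\infty)\rightarrow L^{1}(\mathbb{R}^{n})$ and convolution estimates, and handle the bilinear term by splitting off the singular near-diagonal part of the time integral and using uniform continuity of $\nabla\mathcal{T}$ in $L^{1}(\mathbb{R}^{n})$ on compact time intervals away from the singularity, with the product and local boundedness bounds supplying the constant $M$. The only cosmetic difference is that you split at the fixed time $t_{1}-\delta$ and control the near-diagonal pieces by the explicit bound ${\|\nabla\mathcal{T}(\tau)\|}_{L^{1}(\mathbb{R}^{n})}\lesssim_{n}\tau^{-1/2}$ (giving $O(\delta^{1/2})$), whereas the paper splits at $t-\eta$ and invokes absolute continuity of $\int{\|\nabla\mathcal{T}(s)\|}_{L^{1}(\mathbb{R}^{n})}\,\mathrm{d}s$ to make those pieces small.
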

\begin{proof}
    We use the shorthand notation
    \begin{equation}
        u_{i}(t) = \Phi(t-t_{0})*u_{i}(t_{0}) - \int_{0}^{t}\nabla_{k}\mathcal{T}_{ij}(t-s)*[u_{j}(s)u_{k}(s)]\,\mathrm{d}s.
    \end{equation}
    The heat kernel defines a continuous function $\Phi:(0,\infty)\rightarrow L^{1}(\mathbb{R}^{n})$. By continuity of convolution $L^{1}(\mathbb{R}^{n})\times\left(L^{p,q}(\mathbb{R}^{n})\cap L^{\overline{\infty}}(\mathbb{R}^{n})\right)\rightarrow L^{p,q}(\mathbb{R}^{n})\cap L^{\overline{\infty}}(\mathbb{R}^{n})$, we deduce that $t\rightarrow\Phi(t-t_{0})*u_{i}(t_{0})$ defines a continuous function from $(t_{0},\infty)$ to $L^{p,q}(\mathbb{R}^{n})\cap L^{\overline{\infty}}(\mathbb{R}^{n})$.

    It remains for us to prove continuity of the bilinear term $z_{i}(t):=\int_{0}^{t}\nabla_{k}\mathcal{T}_{ij}(t-s)*[u_{j}(s)u_{k}(s)]\,\mathrm{d}s$. Since ${\|u(t)\|}_{L^{p,q}(\mathbb{R}^{n})\cap L^{\overline{\infty}}(\mathbb{R}^{n})}:={\|u(t)\|}_{L^{p,q}(\mathbb{R}^{n})}\vee{\|u(t)\|}_{L^{\overline{\infty}}(\mathbb{R}^{n})}$ is locally bounded for $t\in(0,T)$, for each $T_{0}\in(t_{0},T)$ we can choose $M\in(0,\infty)$ such that
    \begin{equation}
        \sup_{t\in(t_{0},T_{0})}{\|u(t)\|}_{L^{p,q}(\mathbb{R}^{n})\cap L^{\overline{\infty}}(\mathbb{R}^{n})} < M.
    \end{equation}
    Since $\int_{0}^{T_{0}}{\|\nabla\mathcal{T}(s)\|}_{L^{1}(\mathbb{R}^{n})}\,\mathrm{d}s<\infty$, if $\epsilon>0$ is given then there exists $\eta>0$ such that
    \begin{equation}\label{uniform-integrability}
        \int_{s_{0}}^{s_{0}+\eta}{\|\nabla\mathcal{T}(s)\|}_{L^{1}(\mathbb{R}^{n})}\,\mathrm{d}s < \epsilon \quad \text{for all }0\leq s_{0}<s_{0}+\eta\leq T_{0}.
    \end{equation}
    We know that the function $\nabla\mathcal{T}:(0,\infty)\rightarrow L^{1}(\mathbb{R}^{n})$ is continuous, so the function $\nabla\mathcal{T}:[\frac{\eta}{2},T_{0}]\rightarrow L^{1}(\mathbb{R}^{n})$ is uniformly continuous. We can therefore choose $\delta\in(0,\frac{\eta}{2})$ such that
    \begin{equation}\label{uniform-continuity}
        {\|\nabla\mathcal{T}(t+h-s)-\nabla\mathcal{T}(t-s)\|}_{L^{1}(\mathbb{R}^{n})} \leq \epsilon \quad \text{for all } \frac{\eta}{2}<t-s<t+h-s<T_{0} \text{ with } h<\delta.
    \end{equation}
    Let $t_{0}<t<t+h<T_{0}$ with $h<\delta$. Then
    \begin{equation}
    \begin{aligned}
        &\quad z_{i}(t+h) - z_{i}(t) \\
        &= \int_{t_{0}}^{t+h}\nabla_{k}\mathcal{T}_{ij}(t+h-s)*[u_{j}(s)u_{k}(s)]\,\mathrm{d}s - \int_{t_{0}}^{t}\nabla_{k}\mathcal{T}_{ij}(t-s)*[u_{j}(s)u_{k}(s)]\,\mathrm{d}s \\
        &= \int_{t}^{t+h}\nabla_{k}\mathcal{T}_{ij}(t+h-s)*[u_{j}(s)u_{k}(s)]\,\mathrm{d}s + \int_{t_{0}}^{t}\left(\nabla_{k}\mathcal{T}_{ij}(t+h-s)-\nabla_{k}\mathcal{T}_{ij}(t-s)\right)*[u_{j}(s)u_{k}(s)]\,\mathrm{d}s \\
        &= I_{1} + I_{2}.
    \end{aligned}
    \end{equation}
    (If $t-t_{0}>\eta$, then we write $I_{2}=\int_{t_{0}}^{t}=\int_{t_{0}}^{t-\eta}+\int_{t-\eta}^{t}=I_{3}+I_{4}$). We use (\ref{uniform-integrability}) to estimate
    \begin{equation}
    \begin{aligned}
        {\|I_{1}\|}_{L^{p,q}(\mathbb{R}^{n})\cap L^{\overline{\infty}}(\mathbb{R}^{n})} &\leq \int_{t}^{t+h}{\|\nabla\mathcal{T}(t+h-s)\|}_{L^{1}(\mathbb{R}^{n})}{\|u(s)\otimes u(s)\|}_{L^{p,q}(\mathbb{R}^{n})\cap L^{\overline{\infty}}(\mathbb{R}^{n})}\,\mathrm{d}s \\
        &\leq M^{2}\int_{t}^{t+h}{\|\nabla\mathcal{T}(t+h-s)\|}_{L^{1}(\mathbb{R}^{n})}\,\mathrm{d}s \\
        &\leq M^{2}\epsilon.
    \end{aligned}
    \end{equation}
    If $t-t_{0}\leq\eta$, then (\ref{uniform-integrability}) yields
    \begin{equation}
    \begin{aligned}
        {\|I_{2}\|}_{L^{p,q}(\mathbb{R}^{n})\cap L^{\overline{\infty}}(\mathbb{R}^{n})} &\leq \int_{t_{0}}^{t}{\|\nabla\mathcal{T}(t+h-s)-\nabla\mathcal{T}(t-s)\|}_{L^{1}(\mathbb{R}^{n})}{\|u(s)\otimes u(s)\|}_{L^{p,q}(\mathbb{R}^{n})\cap L^{\overline{\infty}}(\mathbb{R}^{n})}\,\mathrm{d}s \\
        &\leq M^{2}\int_{t_{0}}^{t}\left({\|\nabla\mathcal{T}(t+h-s)\|}_{L^{1}(\mathbb{R}^{n})}+{\|\nabla\mathcal{T}(t-s)\|}_{L^{1}(\mathbb{R}^{n})}\right)\,\mathrm{d}s \\
        &\leq 2M^{2}\epsilon.
    \end{aligned}
    \end{equation}
    If $t-t_{0}>\eta$, then (\ref{uniform-continuity}) yields
    \begin{equation}
    \begin{aligned}
        {\|I_{3}\|}_{L^{p,q}(\mathbb{R}^{n})\cap L^{\overline{\infty}}(\mathbb{R}^{n})} &\leq \int_{t_{0}}^{t-\eta}{\|\nabla\mathcal{T}(t+h-s)-\nabla\mathcal{T}(t-s)\|}_{L^{1}(\mathbb{R}^{n})}{\|u(s)\otimes u(s)\|}_{L^{p,q}(\mathbb{R}^{n})\cap L^{\overline{\infty}}(\mathbb{R}^{n})}\,\mathrm{d}s \\
        &\leq M^{2}\int_{t_{0}}^{t-\eta}{\|\nabla\mathcal{T}(t+h-s)-\nabla\mathcal{T}(t-s)\|}_{L^{1}(\mathbb{R}^{n})}\,\mathrm{d}s \\
        &\leq (T_{0}-t_{0})M^{2}\epsilon,
    \end{aligned}
    \end{equation}
    and (\ref{uniform-integrability}) yields
    \begin{equation}
    \begin{aligned}
        {\|I_{4}\|}_{L^{p,q}(\mathbb{R}^{n})\cap L^{\overline{\infty}}(\mathbb{R}^{n})} &\leq \int_{t-\eta}^{t}{\|\nabla\mathcal{T}(t+h-s)-\nabla\mathcal{T}(t-s)\|}_{L^{1}(\mathbb{R}^{n})}{\|u(s)\otimes u(s)\|}_{L^{p,q}(\mathbb{R}^{n})\cap L^{\overline{\infty}}(\mathbb{R}^{n})}\,\mathrm{d}s \\
        &\leq M^{2}\int_{t-\eta}^{t}\left({\|\nabla\mathcal{T}(t+h-s)\|}_{L^{1}(\mathbb{R}^{n})}+{\|\nabla\mathcal{T}(t-s)\|}_{L^{1}(\mathbb{R}^{n})}\right)\,\mathrm{d}s \\
        &\leq 2M^{2}\epsilon.
    \end{aligned}
    \end{equation}
    For $t_{0}<t<t+h<T_{0}$ with $h<\delta$, we therefore have
    \begin{equation}
        {\|z(t+h)-z(t)\|}_{L^{p,q}(\mathbb{R}^{n})\cap L^{\overline{\infty}}(\mathbb{R}^{n})} \leq (3+T_{0}-t_{0})M^{2}\epsilon.
    \end{equation}
    This bound is independent of $t$, so we conclude that the function $z:(t_{0},T_{0})\rightarrow L^{p,q}(\mathbb{R}^{n})\cap L^{\overline{\infty}}(\mathbb{R}^{n})$ is continuous.
\end{proof}
\begin{theorem}(Continuity in space).
    Assume that $u$ satisfies \eqref{continuity-semigroup} for all $0<t_{0}<t<T$ and $x\in\mathbb{R}^{n}$, with ${\|u(t)\|}_{L^{\overline{\infty}}(\mathbb{R}^{n})}$ being locally bounded for $t\in(0,T)$. Then for all $\alpha\in(0,1)$ we have the H\"{o}lder estimate
    \begin{equation}
        {[u(t)]}_{C^{\alpha}(\mathbb{R}^{n})} \lesssim \frac{{\|u(t_{0})\|}_{L^{\overline{\infty}}(\mathbb{R}^{n})}}{{(t-t_{0})}^{\alpha/2}} + \int_{t_{0}}^{t}\frac{{\|u(s)\|}_{L^{\overline{\infty}}(\mathbb{R}^{n})}^{2}}{{(t-s)}^{(1+\alpha)/2}}\,\mathrm{d}s \quad \text{for all } 0<t_{0}<t<T.
    \end{equation}
\end{theorem}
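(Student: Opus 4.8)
The plan is to bound the H\"{o}lder seminorm of each of the two terms on the right-hand side of \eqref{continuity-semigroup} separately, in both cases converting a spatial difference into an $L^{1}$-norm of a translate-difference of the relevant kernel. Writing $h=x-x'$, the basic inequality I would use is that for $g\in L^{1}(\mathbb{R}^{n})$ and $\psi\in L^{\overline{\infty}}(\mathbb{R}^{n})$ one has $|(g*\psi)(x)-(g*\psi)(x')|\leq\|g(\cdot-h)-g\|_{L^{1}(\mathbb{R}^{n})}\|\psi\|_{L^{\overline{\infty}}(\mathbb{R}^{n})}$; combining the trivial bound $\|g(\cdot-h)-g\|_{L^{1}(\mathbb{R}^{n})}\leq2\|g\|_{L^{1}(\mathbb{R}^{n})}$ with the fundamental-theorem-of-calculus bound $\|g(\cdot-h)-g\|_{L^{1}(\mathbb{R}^{n})}\leq|h|\,\|\nabla g\|_{L^{1}(\mathbb{R}^{n})}$ via the elementary inequality $\min(a,b)\leq a^{1-\alpha}b^{\alpha}$ ($a,b>0$, $\alpha\in[0,1]$) gives $\|g(\cdot-h)-g\|_{L^{1}(\mathbb{R}^{n})}\lesssim|h|^{\alpha}\|g\|_{L^{1}(\mathbb{R}^{n})}^{1-\alpha}\|\nabla g\|_{L^{1}(\mathbb{R}^{n})}^{\alpha}$.

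First I would treat the heat term $(\Phi(t-t_{0})*u_{i}(t_{0}))(x)$. Here $\|\Phi(\tau)\|_{L^{1}(\mathbb{R}^{n})}=1$, and \eqref{heat-type-lorentz} with $p=1$ (so that $\|\,\cdot\,\|_{L^{1,1}(\mathbb{R}^{n})}^{*}=\|\,\cdot\,\|_{L^{1}(\mathbb{R}^{n})}$) and $|\alpha|=1$ gives $\|\nabla\Phi(\tau)\|_{L^{1}(\mathbb{R}^{n})}\lesssim_{n}\tau^{-1/2}$; so the estimate above yields $\|\Phi(t-t_{0},\cdot-h)-\Phi(t-t_{0},\cdot)\|_{L^{1}(\mathbb{R}^{n})}\lesssim_{n}|h|^{\alpha}(t-t_{0})^{-\alpha/2}$, and dividing by $|h|^{\alpha}$ and taking $\sup_{x\neq x'}$ produces the first term of the claimed bound.

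Next I would treat the bilinear term $z_{i}(t,x)=\int_{t_{0}}^{t}\big(\nabla_{k}\mathcal{T}_{ij}(t-s)*[u_{j}(s)u_{k}(s)]\big)(x)\,\mathrm{d}s$. As in the proof of the preceding theorem (Continuity in time), this integral converges absolutely, using local boundedness of $\|u(s)\|_{L^{\overline{\infty}}(\mathbb{R}^{n})}$ together with integrability of $\|\nabla\mathcal{T}(\sigma)\|_{L^{1}(\mathbb{R}^{n})}\lesssim_{n}\sigma^{-1/2}$ near $\sigma=0$, which legitimises taking the spatial difference inside the $s$-integral. Recognising $\nabla_{k}\mathcal{T}_{ij}$ and $\nabla_{l}\nabla_{k}\mathcal{T}_{ij}$ as heat-type kernels $D^{\alpha}K$ built from the smooth, degree-$0$-homogeneous symbol $\delta_{ij}-\xi_{i}\xi_{j}/|\xi|^{2}$ with $|\alpha|=1$, respectively $|\alpha|=2$, \eqref{heat-type-lorentz} with $p=1$ gives $\|\nabla\mathcal{T}(\sigma)\|_{L^{1}(\mathbb{R}^{n})}\lesssim_{n}\sigma^{-1/2}$ and $\|\nabla^{2}\mathcal{T}(\sigma)\|_{L^{1}(\mathbb{R}^{n})}\lesssim_{n}\sigma^{-1}$. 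The interpolated translate-difference estimate then gives $\|\nabla_{k}\mathcal{T}_{ij}(\sigma,\cdot-h)-\nabla_{k}\mathcal{T}_{ij}(\sigma,\cdot)\|_{L^{1}(\mathbb{R}^{n})}\lesssim_{n}|h|^{\alpha}(\sigma^{-1/2})^{1-\alpha}(\sigma^{-1})^{\alpha}=|h|^{\alpha}\sigma^{-(1+\alpha)/2}$; pairing this with the pointwise bound $\|u_{j}(s)u_{k}(s)\|_{L^{\overline{\infty}}(\mathbb{R}^{n})}\leq\|u(s)\|_{L^{\overline{\infty}}(\mathbb{R}^{n})}^{2}$, dividing by $|h|^{\alpha}$ and taking $\sup_{x\neq x'}$ produces the second term of the claimed bound, the $s$-integral $\int_{t_{0}}^{t}(t-s)^{-(1+\alpha)/2}\|u(s)\|_{L^{\overline{\infty}}(\mathbb{R}^{n})}^{2}\,\mathrm{d}s$ being finite since $(1+\alpha)/2<1$ for $\alpha\in(0,1)$. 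Adding the two contributions gives the theorem.

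The argument is essentially mechanical, so I do not expect a serious obstacle; the points needing a little care --- the closest thing to a ``hard part'' --- are the interpolation $\min(a,b)\leq a^{1-\alpha}b^{\alpha}$ that merges the competing ``$L^{1}$-mass'' and ``derivative'' bounds into the single scaling $|h|^{\alpha}\sigma^{-(1+\alpha)/2}$, the verification that $\nabla_{k}\mathcal{T}_{ij}$ and $\nabla_{l}\nabla_{k}\mathcal{T}_{ij}$ fall under the heat-type-kernel framework so that \eqref{heat-type-lorentz} applies, and the observation that the resulting time integral converges precisely because $\alpha<1$ --- which is where the hypothesis $\alpha\in(0,1)$ enters.
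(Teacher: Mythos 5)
Your proof is correct, but it follows a genuinely different route from the paper's. You convert each spatial increment into an $L^{1}$ translate-difference norm of the kernel, via $|(g*\psi)(x)-(g*\psi)(x')|\leq\|g(\cdot-h)-g\|_{L^{1}(\mathbb{R}^{n})}\|\psi\|_{L^{\overline{\infty}}(\mathbb{R}^{n})}$, and then interpolate $\min\left(2\|g\|_{L^{1}},|h|\|\nabla g\|_{L^{1}}\right)\leq(2\|g\|_{L^{1}})^{1-\alpha}(|h|\|\nabla g\|_{L^{1}})^{\alpha}$, so that the only kernel inputs are the $L^{1}$ bounds from \eqref{heat-type-lorentz} at $p=1$: $\|\nabla\Phi(\tau)\|_{L^{1}}\lesssim_{n}\tau^{-1/2}$, $\|\nabla\mathcal{T}(\sigma)\|_{L^{1}}\lesssim_{n}\sigma^{-1/2}$, $\|\nabla^{2}\mathcal{T}(\sigma)\|_{L^{1}}\lesssim_{n}\sigma^{-1}$. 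The paper instead works pointwise: it fixes $R=|x-y|$, splits $\mathbb{R}^{n}$ into the near region $\Omega=B(x,2R)\cup B(y,2R)$ and its complement, estimates the near-region contributions ($I_{1},I_{2},I_{4},I_{5}$) directly from the pointwise decay \eqref{heat-type-pointwise} of $\Phi$ and $\nabla\mathcal{T}$, and handles the far-region contributions ($I_{3},I_{6}$) by the mean value theorem combined with the pointwise decay of the next-order derivatives, before converting factors like $3R/(\sqrt{t-s}(\sqrt{t-s}+3R))$ into $R^{\alpha}(t-s)^{-(1+\alpha)/2}$. Your argument is more modular and shorter, since the translate-difference interpolation packages the near/far dichotomy into a single inequality and reuses the already-proved $L^{1}$ kernel estimates; the paper's decomposition is more elementary and explicit (it exposes the borderline logarithm in the heat term and works directly with the Gaussian-type pointwise bounds), but yields the same scaling. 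Both arguments use $\alpha\in(0,1)$ in the same two places (the $|h|^{\alpha}$ gain and the integrability of $(t-s)^{-(1+\alpha)/2}$), and your justification of differentiating under the $s$-integral and of the fundamental-theorem-of-calculus bound (the kernels are smooth in $x$ with integrable gradients) is adequate, so I see no gap.
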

\begin{proof}
    We write
    \begin{equation}
    \begin{aligned}
        u_{i}(t,x) - u_{i}(t,y) &= \int_{\mathbb{R}^{n}}\left[\Phi(t-t_{0},x-z)-\Phi(t-t_{0},y-z)\right]u_{i}(t_{0},z)\,\mathrm{d}z \\
        &\qquad - \int_{t_{0}}^{t}\int_{\mathbb{R}^{n}}\left[\nabla_{k}\mathcal{T}_{ij}(t-s,x-z)-\nabla_{k}\mathcal{T}_{ij}(t-s,y-z)\right][u_{j}(s,z)u_{k}(s,z)]\,\mathrm{d}z\,\mathrm{d}s \\
        &= I_{1} - I_{2} + I_{3} - I_{4} + I_{5} - I_{6},
    \end{aligned}
    \end{equation}
    where
    \begin{equation}
    \begin{aligned}
        \Omega &:= B(x,2R) \cup B(y,2R), \quad R := |x-y|, \\
        I_{1} &:= \int_{\Omega}\Phi(t-t_{0},x-z)u_{i}(t_{0},z)\,\mathrm{d}z, \\
        I_{2} &:= \int_{\Omega}\Phi(t-t_{0},y-z)u_{i}(t_{0},z)\,\mathrm{d}z, \\
        I_{3} &:= \int_{\mathbb{R}^{n}\setminus\Omega}\left[\Phi(t-t_{0},x-z)-\Phi(t-t_{0},y-z)\right]u_{i}(t_{0},z)\,\mathrm{d}z, \\
        I_{4} &:= \int_{t_{0}}^{t}\int_{\Omega}\nabla_{k}\mathcal{T}_{ij}(t-s,x-z)[u_{j}(s,z)u_{k}(s,z)]\,\mathrm{d}z\,\mathrm{d}s, \\
        I_{5} &:= \int_{t_{0}}^{t}\int_{\Omega}\nabla_{k}\mathcal{T}_{ij}(t-s,y-z)[u_{j}(s,z)u_{k}(s,z)]\,\mathrm{d}z\,\mathrm{d}s, \\
        I_{6} &:= \int_{t_{0}}^{t}\int_{\mathbb{R}^{n}\setminus\Omega}\left[\nabla_{k}\mathcal{T}_{ij}(t-s,x-z)-\nabla_{k}\mathcal{T}_{ij}(t-s,y-z)\right][u_{j}(s,z)u_{k}(s,z)]\,\mathrm{d}z\,\mathrm{d}s.
    \end{aligned}
    \end{equation}
    Using the pointwise estimate (\ref{heat-type-pointwise}) for $\nabla\mathcal{T}$, for all $\alpha\in(0,1)$ we have
    \begin{equation}
    \begin{aligned}
        |I_{4}| &\leq \int_{t_{0}}^{t}\left(\int_{\Omega}|\nabla\mathcal{T}(t-s,x-z)|\,\mathrm{d}z\right){\|u(s)\|}_{L^{\overline{\infty}}(\mathbb{R}^{n})}^{2}\,\mathrm{d}s \\
        &\lesssim \int_{t_{0}}^{t}\left(\int_{\Omega}{(t-s)}^{-\frac{n+1}{2}}{\left(1+\frac{|x-z|}{\sqrt{t-s}}\right)}^{-(n+1)}\,\mathrm{d}z\right){\|u(s)\|}_{L^{\overline{\infty}}(\mathbb{R}^{n})}^{2}\,\mathrm{d}s \\
        &\lesssim \int_{t_{0}}^{t}\left(\int_{0}^{3R}{(t-s)}^{-\frac{n+1}{2}}{\left(1+\frac{r}{\sqrt{t-s}}\right)}^{-(n+1)}r^{n-1}\,\mathrm{d}r\right){\|u(s)\|}_{L^{\overline{\infty}}(\mathbb{R}^{n})}^{2}\,\mathrm{d}s \\
        &= \int_{t_{0}}^{t}\left(\int_{0}^{3R}{\left(\sqrt{t-s}+r\right)}^{-(n+1)}r^{n-1}\,\mathrm{d}r\right){\|u(s)\|}_{L^{\overline{\infty}}(\mathbb{R}^{n})}^{2}\,\mathrm{d}s \\
        &\leq \int_{t_{0}}^{t}\left(\int_{0}^{3R}{\left(\sqrt{t-s}+r\right)}^{-2}\,\mathrm{d}r\right){\|u(s)\|}_{L^{\overline{\infty}}(\mathbb{R}^{n})}^{2}\,\mathrm{d}s \\
        &= \int_{t_{0}}^{t}\frac{3R}{\sqrt{t-s}(\sqrt{t-s}+3R)}{\|u(s)\|}_{L^{\overline{\infty}}(\mathbb{R}^{n})}^{2}\,\mathrm{d}s \\
        &\lesssim \int_{t_{0}}^{t}\frac{R^{\alpha}{\|u(s)\|}_{L^{\overline{\infty}}(\mathbb{R}^{n})}^{2}}{{(t-s)}^{(1+\alpha)/2}}\,\mathrm{d}s,
    \end{aligned}
    \end{equation}
    where in the last line we use the identity $\frac{b}{a+b}\leq{\left(\frac{b}{a+b}\right)}^{\alpha}\leq{\left(\frac{b}{a}\right)}^{\alpha}$ for $a,b\in(0,\infty)$ and $\alpha\in(0,1)$. Similarly we obtain
    \begin{equation}
    \begin{aligned}
        |I_{5}| &\lesssim  \int_{t_{0}}^{t}\frac{R^{\alpha}{\|u(s)\|}_{L^{\overline{\infty}}(\mathbb{R}^{n})}^{2}}{{(t-s)}^{(1+\alpha)/2}}\,\mathrm{d}s, \\
        |I_{1}|\vee|I_{2}| &\lesssim \log\left(1+\frac{3R}{\sqrt{t-t_{0}}}\right){\|u(t_{0})\|}_{L^{\overline{\infty}}(\mathbb{R}^{n})} \lesssim \frac{R^{\alpha}{\|u(t_{0})\|}_{\mathcal{B}(\mathbb{R}^{n})}}{{(t-t_{0})}^{\alpha/2}}.
    \end{aligned}
    \end{equation}
    Using the mean value theorem, the pointwise estimate (\ref{heat-type-pointwise}) for $\nabla^{2}\mathcal{T}$, and the implication
    \begin{equation}
        |x-\xi|\leq R\leq \frac{1}{2}|x-z| \quad \Rightarrow \quad |\xi-z|\geq\frac{1}{2}|x-z|,
    \end{equation}
    for all $\alpha\in(0,1)$ we have (for some $\xi(z)$ between $x$ and $y$)
    \begin{equation}
    \begin{aligned}
        |I_{6}| &\leq \int_{t_{0}}^{t}\left(\int_{\mathbb{R}^{n}\setminus\Omega}R|\nabla^{2}\mathcal{T}(t-s,\xi(z)-z)|\,\mathrm{d}z\right){\|u(s)\|}_{L^{\overline{\infty}}(\mathbb{R}^{n})}^{2}\,\mathrm{d}s \\
        &\lesssim \int_{t_{0}}^{t}\left(\int_{\mathbb{R}^{n}\setminus\Omega}R{(t-s)}^{-\frac{n+2}{2}}{\left(1+\frac{|\xi(z)-z|}{\sqrt{t-s}}\right)}^{-(n+2)}\,\mathrm{d}z\right){\|u(s)\|}_{L^{\overline{\infty}}(\mathbb{R}^{n})}^{2}\,\mathrm{d}s \\
        &\lesssim \int_{t_{0}}^{t}\left(\int_{2R}^{\infty}R{(t-s)}^{-\frac{n+2}{2}}{\left(1+\frac{r}{2\sqrt{t-s}}\right)}^{-(n+2)}r^{n-1}\,\mathrm{d}r\right){\|u(s)\|}_{L^{\overline{\infty}}(\mathbb{R}^{n})}^{2}\,\mathrm{d}s \\
        &\sim \int_{t_{0}}^{t}\left(\int_{2R}^{\infty}R{\left(2\sqrt{t-s}+r\right)}^{-(n+2)}r^{n-1}\,\mathrm{d}r\right){\|u(s)\|}_{L^{\overline{\infty}}(\mathbb{R}^{n})}^{2}\,\mathrm{d}s \\
        &\leq \int_{t_{0}}^{t}\left(\int_{2R}^{\infty}R{\left(2\sqrt{t-s}+r\right)}^{-3}\,\mathrm{d}r\right){\|u(s)\|}_{L^{\overline{\infty}}(\mathbb{R}^{n})}^{2}\,\mathrm{d}s \\
        &\sim \int_{t_{0}}^{t}\frac{R}{{(2\sqrt{t-s}+2R)}^{2}}{\|u(s)\|}_{L^{\overline{\infty}}(\mathbb{R}^{n})}^{2}\,\mathrm{d}s \\
        &\lesssim \int_{t_{0}}^{t}\frac{R^{\alpha}{\|u(s)\|}_{L^{\overline{\infty}}(\mathbb{R}^{n})}^{2}}{{(t-s)}^{(1+\alpha)/2}}\,\mathrm{d}s,
    \end{aligned}
    \end{equation}
    where in the last line we use the identity $\frac{b}{{(a+b)}^{2}}=\frac{1}{(a+b)}\frac{b}{a+b}\leq\frac{1}{a}{\left(\frac{b}{a}\right)}^{\alpha}$. Similarly we obtain
    \begin{equation}
        |I_{3}| \lesssim \frac{R}{(2\sqrt{t-s}+2R)}{\|u(t_{0})\|}_{L^{\overline{\infty}}(\mathbb{R}^{n})} \lesssim \frac{R^{\alpha}{\|u(t_{0})\|}_{L^{\overline{\infty}}(\mathbb{R}^{n})}}{{(t-t_{0})}^{\alpha/2}}.
    \end{equation}
\end{proof}

\end{document}